\renewcommand{\a}{\alpha}
\renewcommand{\b}{\beta}
\renewcommand{\l}{\lambda}
\newcommand{\s}{\sigma}
\renewcommand{\O}{\Omega}
\newcommand{\la}{\langle}
\newcommand{\ra}{\rangle}
\newcommand{\leqs}{\leqslant}
\newcommand{\geqs}{\geqslant}
\newcommand{\vs}{\vspace{3mm}}
\newcommand{\imod}[1]{\allowbreak\mkern4mu({\operator@font mod}\,\,#1)}
\theoremstyle{plain}
\newtheorem{theorem}{Theorem} 
\newtheorem{corol}[theorem]{Corollary}
\newtheorem{thm}{Theorem}[section] 
\newtheorem{lem}[thm]{Lemma}
\newtheorem{prop}[thm]{Proposition} 
\newtheorem{cor}[thm]{Corollary} 
\newtheorem*{theorem*}{Theorem} 
\newtheorem*{conj*}{Conjecture}
\theoremstyle{definition}
\newtheorem{rem}[thm]{Remark}
\newtheorem{remk}{Remark}
\begin{document}

\title[Topological generation by unipotent elements]{On the topological generation of exceptional groups \\ by unipotent elements}

\author{Timothy C. Burness}
\address{T.C. Burness, School of Mathematics, University of Bristol, Bristol BS8 1UG, UK}
\email{t.burness@bristol.ac.uk}

\date{\today} 

\begin{abstract}
Let $G$ be a simple algebraic group of exceptional type over an algebraically closed field  of characteristic $p \geqs 0$ which is not algebraic over a finite field. Let $\mathcal{C}_1, \ldots, \mathcal{C}_t$ be non-central conjugacy classes in $G$. In earlier work with Gerhardt and Guralnick, we proved that if $t \geqs 5$ (or $t \geqs 4$ if $G = G_2$), then there exist elements $x_i \in \mathcal{C}_i$ such that $\la x_1, \ldots, x_t \ra$ is Zariski dense in $G$. Moreover, this bound on $t$ is best possible. Here we establish a more refined version of this result in the special case where $p>0$ and the $\mathcal{C}_i$ are unipotent classes containing elements of order $p$. Indeed, in this setting we completely determine the classes $\mathcal{C}_1, \ldots, \mathcal{C}_t$ for $t \geqs 2$ such that $\la x_1, \ldots, x_t \ra$ is Zariski dense for some $x_i \in \mathcal{C}_i$.      
\end{abstract}

\maketitle

\section{Introduction}\label{s:intro}

Let $G$ be a simple algebraic group over an algebraically closed field $k$ of characteristic $p \geqs 0$ and let $\mathcal{C}_1, \ldots, \mathcal{C}_t$ be non-central conjugacy classes in $G$, where $t \geqs 2$ is an integer. Consider the irreducible subvariety $X = \mathcal{C}_1 \times \cdots \times \mathcal{C}_t$ of the Cartesian product $G^t$. Given a tuple $x = (x_1, \ldots, x_t) \in X$, let $G(x)$ denote the Zariski closure of the subgroup $\la x_1, \ldots, x_t\ra$ and set
\[
\Delta = \{ x \in X \, : \, G(x) = G\}.
\]
In this setting, a basic problem is to determine whether or not $\Delta$ is empty. Note that if $k$ is algebraic over a finite field then $G$ is locally finite and thus $\Delta$ is always empty, so this problem is only interesting when $k$ is not algebraic over a finite field, which is a hypothesis we adopt throughout the paper.

This problem has been the subject of several recent papers and some general results have been established. In characteristic zero, for example, a theorem of Guralnick \cite{gurnato} shows that $\Delta$ is always an open subset of $X$. In the general setting, \cite[Theorem 2]{BGG1} states that $\Delta$ is non-empty if and only if it is dense in $X$, while \cite[Theorem 1]{BGG2} reveals that $\Delta$ is non-empty if and only if it is generic, which means that it contains the complement of a countable union of proper closed subvarieties of $X$. If $G$ is a simple algebraic group of exceptional type, then \cite[Theorem 7]{BGG1} states that $\Delta$ is non-empty if $t \geqs 5$, and the same conclusion holds for $t \geqs 4$ when $G = G_2$. Moreover, this is best possible since there are examples where $t=4$ ($t=3$ for $G=G_2$) and $\Delta$ is empty (see \cite[Theorem 3.22]{BGG1}). 

In this paper, we extend the earlier work in \cite{BGG1} by studying the special case where $G$ is an exceptional type group in positive characteristic $p$ and each $\mathcal{C}_i$ is a conjugacy class of unipotent elements of order $p$ (note that every nontrivial unipotent element has order $p$ if $p \geqs h$, where $h$ is the Coxeter number of $G$). In this situation, our aim is to classify the varieties $X = \mathcal{C}_1 \times \cdots \times \mathcal{C}_t$ where $t \geqs 2$ and $\Delta$ is empty. This is in a similar spirit to the main theorem of \cite{BGG2}, which considers the analogous problem for symplectic and orthogonal groups when the $\mathcal{C}_i$ comprise elements of prime order modulo the centre of the group. In turn, this extends earlier work of Gerhardt \cite{Ger} on linear groups. Notice that all of these results are independent of the isogeny class of $G$ since the centre of $G$ is contained in the Frattini subgroup and thus a subgroup $H$ is dense in $G$ if and only if $HZ/Z$ is dense in $G/Z$, where $Z$ is any central subgroup of $G$. We will typically work with the simply connected form of the group.

Our main result is the following (Tables \ref{tab:main} and \ref{tab:special} are presented at the end of the paper in Section \ref{s:tab}). Note that any two involutions generate a dihedral group, which explains why we assume $p \geqs 3$ when $t=2$.

\begin{theorem}\label{t:main}
Let $G$ be a simple algebraic group of exceptional type over an algebraically closed field of characteristic $p>0$ that is not algebraic over a finite field. Set $X = \mathcal{C}_1 \times \cdots \times \mathcal{C}_t$, where $t \geqs 2$ and each $\mathcal{C}_i$ is a conjugacy class of elements of order $p$ in $G$. Assume $p \geqs 3$ if $t=2$. Then $\Delta$ is empty if and only if 
$X$ is one of the cases recorded in Tables \ref{tab:main} and \ref{tab:special}.
\end{theorem}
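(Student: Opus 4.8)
\emph{Proof strategy.} The plan is to combine the reduction theory of \cite{BGG1, BGG2} with detailed information on unipotent classes of exceptional groups and their fusion in reductive subgroups. First, by \cite[Theorem 2]{BGG1} we have that $\Delta$ is non-empty if and only if it is dense in $X$, and by \cite[Theorem 1]{BGG2} this is equivalent to $\Delta$ being generic; moreover, by \cite[Theorem 7]{BGG1} we already know $\Delta$ is non-empty whenever $t \geqs 5$ (and whenever $t \geqs 4$ if $G = G_2$), so it suffices to treat $t \in \{2,3,4\}$ (and $t \in \{2,3\}$ when $G = G_2$), working throughout with the simply connected form of $G$. For a closed subgroup $M < G$ set
\[
Z_M = \{ (x_1, \ldots, x_t) \in X \, : \, \la x_1, \ldots, x_t \ra \leqs M^g \text{ for some } g \in G \},
\]
a constructible subset of the irreducible variety $X$ that is disjoint from $\Delta$. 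Since a proper closed subgroup of $G$ lies in a maximal one, and since the finite maximal subgroups can be disregarded here (their contribution being non-generic under the standing hypothesis on $k$, cf. \cite{BGG1, BGG2}), the problem will reduce to the following criterion: $\Delta$ is empty if and only if $Z_M$ is dense in $X$ for some positive-dimensional maximal closed subgroup $M$ of $G$. By the Liebeck--Seitz classification of such subgroups, this is a finite problem.

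The main work will then be to decide, for each exceptional $G$, each admissible tuple of classes, and each positive-dimensional maximal $M$, whether $\overline{Z_M} = X$. Writing $c_i(M) = \dim \mathcal{C}_i - \dim(\mathcal{C}_i \cap M)$ (to be read as $+\infty$ if $\mathcal{C}_i \cap M = \varnothing$), I would use the basic estimate
\[
\dim Z_M \leqs (\dim G - \dim M) + \sum_{i=1}^{t} \dim(\mathcal{C}_i \cap M)
\]
as a first filter: $\overline{Z_M} = X$ forces the numerical condition $\sum_{i} c_i(M) \leqs \dim G - \dim M$. Conversely, $Z_M$ is dense precisely when the natural map $G \times \prod_{i} (\mathcal{C}_i \cap M) \to X$, $(g, (y_i)) \mapsto (y_i^g)$, is dominant, and checking this once the numerical condition holds will be the substantive point — it amounts to controlling what a general tuple in $Z_M$ generates, in particular ruling out that it lies in a positive-dimensional family of conjugates of $M$. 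To run the argument I would also need (a) for each $G$ and prime $p$, the list of unipotent classes consisting of elements of order $p$, which is governed by the known bounds on the orders of unipotent elements and pins down the admissible $\mathcal{C}_i$ (note that for $t = 2$, $p = 2$ every pair generates a dihedral group, which is why $p \geqs 3$ is assumed there); and (b) for each such class $\mathcal{C}$ and each $M$, the dimension $\dim(\mathcal{C} \cap M)$, read off from the fusion of unipotent classes of $M$ into $G$ using the work of Lawther, Liebeck--Seitz, Lawther--Testerman and others, valid in all characteristics.

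With this data assembled, the verification would split into two halves. For each tuple appearing in Tables \ref{tab:main} and \ref{tab:special} I would exhibit a positive-dimensional maximal $M$ — typically a subsystem subgroup, a maximal-rank subgroup, or a maximal parabolic — satisfying $\sum_i c_i(M) \leqs \dim G - \dim M$, and then confirm that $Z_M$ is genuinely dense: in the ``tight'' cases (equality in the numerical condition) by producing an explicit tuple generating a dense subgroup of $M$ whose centraliser in $G$ lies in $N_G(M)$, and in the remaining cases by checking directly that $Z_M = X$ — for instance when a pair of small-class elements automatically lies in a common subsystem subgroup. Either outcome gives $\Delta = \varnothing$. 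For every tuple not in the tables I would instead show $\sum_i c_i(M) > \dim G - \dim M$ for all positive-dimensional maximal $M$, whence $\Delta \neq \varnothing$; here I expect most configurations to fall at once to general lower bounds for the codimension of $\mathcal{C} \cap M$ in $\mathcal{C}$ (in terms of $\dim \mathcal{C}$ and the rank of $G$), leaving only a bounded list of borderline cases to settle by exact class-dimension computations.

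The hard part will be twofold. First, the fusion data in (b) must be correct in the small (bad) characteristics and near the order-$p$ threshold, precisely where the unipotent class structure of $G$ and of its reductive maximal subgroups — hence the dimensions $\dim(\mathcal{C}_i \cap M)$ — is most delicate; compiling and cross-checking it uniformly for $G_2$, $F_4$, $E_6$, $E_7$ and $E_8$ is the bulk of the labour. Second, in the cases where $\Delta$ is claimed empty the dimension inequality alone will not suffice — it only gives $\dim Z_M \geqs \dim X$ — and I will have to genuinely prove that $Z_M$ is dense, that is, rule out that every relevant tuple lies in a positive-dimensional family of conjugates of $M$ (equivalently, in a proper subgroup of $M$ of small codimension). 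This dominance step, most demanding exactly in the borderline cases where $\sum_i c_i(M) = \dim G - \dim M$, is where the argument will require the most care.
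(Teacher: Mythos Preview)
Your strategy is essentially the paper's: the numerical condition $\sum_i c_i(M) \leqs \dim G - \dim M$ is exactly $\Sigma_X(H) \geqs t-1$ in the paper's notation (via Proposition \ref{p:dim}), the non-emptiness direction is handled by verifying $\Sigma_X(H) < t-1$ for all $H \in \mathcal{M}$ (Proposition \ref{p:fix}), and your dominance argument for the ``tight'' empty cases is formalised as Proposition \ref{p:fibre}.

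Two technical ingredients you have not identified will make the difference between a tractable and an intractable computation. First, for the vast majority of the ``empty'' cases (all of Table \ref{tab:main}) the paper does not establish dominance of $Z_M$ directly but instead checks a single inequality on fixed spaces in one $kG$-module $V$ (Table \ref{tab:mod}): if $\sum_i \dim C_V(y_i) > (t-1)\dim V$ then every tuple fixes a nonzero vector and $\Delta$ is empty immediately (Theorem \ref{t:fixV}, Corollary \ref{c:fixV}). This collapses dozens of cases to a table look-up in \cite{Law1} and isolates the handful of genuinely delicate ``empty'' cases in Table \ref{tab:special}, which are then dispatched via Proposition \ref{p:fibre}. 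Second, for parabolic $H$ the quantity $\dim(\mathcal{C}_i \cap H)$ is not available from Lawther's fusion tables (which treat reductive maximal subgroups); the paper computes $\dim C_\Omega(y)$ for parabolic actions via the permutation character $1^{G_\sigma}_{H_\sigma}$ and the recently completed Green functions of Geck and L\"ubeck \cite{Geck,Lub} (Section \ref{s:parab}). Without these two shortcuts your programme is correct in outline but would be substantially harder to execute.
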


\begin{remk}\label{r:main}
Some remarks on the statement of Theorem \ref{t:main} are in order.
\begin{itemize}\addtolength{\itemsep}{0.2\baselineskip}
\item[{\rm (a)}] We adopt the notation for unipotent classes from \cite{LS_book} (see Tables 22.1.1-22.1.5 in \cite{LS_book}) and it is worth noting that this sometimes differs from the notation used by other authors. For example, the unipotent class in $E_7$ labelled $(A_1^3)^{(1)}$ in \cite{LS_book} is denoted $(3A_1)''$ in \cite{Law1,Spal}. Similarly, the class $E_6(a_3)$ in \cite{Law1,LS_book} is labelled $A_5+A_1$ in \cite{Spal}. 
\item[{\rm (b)}] By inspecting the relevant tables in \cite{Law1}, it is easy to read off the required condition on $p$ to ensure that each $\mathcal{C}_i$ contains elements of order $p$. To do this, we consider the action of $G$ on a suitable $kG$-module $V$ and we inspect the Jordan form of a representative $y \in \mathcal{C}_i$ in this representation, noting that $y$ has order $p$ if and only if every Jordan block has size at most $p$. For example, if $G = E_8$ and $y \in G$ is contained in the class $A_2$, then \cite[Table 9]{Law1} states that the Jordan form of $y$ on the adjoint module for $G$ is as follows
\[
\left\{\begin{array}{ll}
(J_4^2,J_3^{54},J_1^{78}) & p = 2 \\
(J_3^{57},J_1^{77}) & p = 3 \\
(J_5,J_3^{55},J_1^{78}) & p \geqs 5
\end{array}\right.
\]
where $J_i$ denotes a standard unipotent Jordan block of size $i$. Therefore, the elements in this class have order $p$ if and only if $p \geqs 3$ (they have order $4$ when $p=2$). 
\item[{\rm (c)}] We have chosen to record the cases $X = \mathcal{C}_1 \times \cdots \times \mathcal{C}_t$ with $\Delta$ empty over two tables, rather than one. This is essentially an artefact of our proof and it will be convenient to make a distinction between the cases in Tables \ref{tab:main} and \ref{tab:special} (for example, see Theorem \ref{t:fixV} below).
\item[{\rm (d)}] To avoid unnecessary repetition, the tuples in Tables \ref{tab:main} and \ref{tab:special} are listed up to reordering, and also up to graph automorphisms when $(G,p) = (G_2,3)$ or $(F_4,2)$. For example, if $(G,p) = (G_2,3)$ and $\tau$ is a graph automorphism of $G$, then $\tau$ interchanges the classes of long and short root elements (denoted by $A_1$ and $\tilde{A}_1$), whence $\Delta$ is also empty when $t=3$ and $(\mathcal{C}_1,\mathcal{C}_2,\mathcal{C}_3) = (\tilde{A}_1,\tilde{A}_1,\tilde{A}_1)$. 
\end{itemize}
\end{remk}

The Zariski closure of a unipotent conjugacy class is a union of unipotent classes and this leads naturally to a partial order on the set of unipotent classes of $G$, which has been completely determined by Spaltenstein (see \cite[Section II.10]{Spal} for $G = G_2$ and \cite[Section IV.2]{Spal} for the other types). This is relevant here because Proposition \ref{p:clos} (see \cite[Lemma 2.2]{BGG2}) states that $\Delta$ is empty if and only if $G(y) \ne G$ for all $y \in \bar{X}$, where $\bar{X}$ denotes the Zariski closure of $X$ in $G^t$. This observation allows us to present the following reformulation of Theorem \ref{t:main} (see Remark \ref{r:spal} for some brief comments on Spaltenstein's notation in \cite{Spal}).

\begin{corol}\label{c:main2}
The set $\Delta$ is empty if and only if $X$ is contained in the closure of one of the varieties $Y = \mathcal{C}_1' \times \cdots \times \mathcal{C}_t'$ recorded in Table \ref{tab:clos}, up to reordering and graph automorphisms if $(G,p) = (G_2,3)$ or $(F_4,2)$.
\end{corol}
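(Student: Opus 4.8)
My plan is to deduce Corollary~\ref{c:main2} from Theorem~\ref{t:main} together with Proposition~\ref{p:clos}, so that the only new ingredient is the combinatorics of Spaltenstein's closure order on unipotent classes; we retain the hypotheses of Theorem~\ref{t:main} throughout. The first step is an elementary lemma: if $\mathcal{C}$ is a conjugacy class of elements of order $p$ and $\mathcal{D}$ is a unipotent class with $\mathcal{D} \subseteq \bar{\mathcal{C}}$, then $\mathcal{D}$ too consists of elements of order $p$. To see this, fix a faithful rational $kG$-module $V$ for which an element of $G$ has order $p$ precisely when all of its Jordan blocks on $V$ have size at most $p$ (for instance the adjoint module; cf. Remark~\ref{r:main}(b)). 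Since $\mathcal{D} \subseteq \bar{\mathcal{C}}$, the image of $\mathcal{D}$ in $\mathrm{GL}(V)$ lies in the closure of the $\mathrm{GL}(V)$-conjugacy class of the image of $\mathcal{C}$, so by the characteristic-free description of unipotent class closures in $\mathrm{GL}(V)$ the Jordan type of $\mathcal{D}$ on $V$ is dominated by that of $\mathcal{C}$; in particular its largest block has size at most $p$. Hence Theorem~\ref{t:main} applies not only to $X$ but to every product $\mathcal{D}_1 \times \cdots \times \mathcal{D}_t$ with $\mathcal{D}_i \subseteq \bar{\mathcal{C}_i}$, and the same check shows that the classes appearing in Table~\ref{tab:clos} are again classes of elements of order $p$.

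Next I would unwind Proposition~\ref{p:clos}. As $\bar{X} = \bar{\mathcal{C}_1} \times \cdots \times \bar{\mathcal{C}_t}$ and each $\bar{\mathcal{C}_i}$ is the finite union of the classes below $\mathcal{C}_i$ in the closure order, $\bar{X}$ is the union of the subvarieties $X_{\mathbf{D}} = \mathcal{D}_1 \times \cdots \times \mathcal{D}_t$ over all tuples $\mathbf{D} = (\mathcal{D}_1,\ldots,\mathcal{D}_t)$ with $\mathcal{D}_i \leqs \mathcal{C}_i$. So Proposition~\ref{p:clos} gives that $\Delta$ is empty if and only if $G(y) \ne G$ for every $y \in X_{\mathbf{D}}$ and every such $\mathbf{D}$; by the first step and Theorem~\ref{t:main} this is equivalent to asking that every such $\mathbf{D}$ occurs, up to reordering and (when $(G,p)=(G_2,3)$ or $(F_4,2)$) up to graph automorphisms, in Table~\ref{tab:main} or \ref{tab:special}. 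The same reasoning applied to an entry $Y = \mathcal{C}_1' \times \cdots \times \mathcal{C}_t'$ of Table~\ref{tab:clos} shows that the analogue of $\Delta$ for $Y$ is empty whenever every tuple $\mathbf{D}$ with $\mathcal{D}_i \leqs \mathcal{C}_i'$ occurs in Table~\ref{tab:main} or \ref{tab:special}.

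The Corollary now reduces to two finite checks about the closure order. First: every tuple dominated by an entry of Table~\ref{tab:clos} occurs in Table~\ref{tab:main} or \ref{tab:special}. Granting this, the analogue of $\Delta$ is empty for each $Y$ in Table~\ref{tab:clos}, so if $X \subseteq \bar{Y}$ then $\bar{X} \subseteq \bar{Y}$ and Proposition~\ref{p:clos} forces $\Delta$ empty. Second: every tuple in Table~\ref{tab:main} or \ref{tab:special} is dominated by an entry of Table~\ref{tab:clos}. Granting this and Theorem~\ref{t:main} (and using that $X$ is irreducible, so it lies in a single $\bar{Y}$ once it lies in a finite union of them), we get the converse: if $\Delta$ is empty then $X \subseteq \bar{Y}$ for a suitable $Y$ in Table~\ref{tab:clos}. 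I would perform both checks by running through $G_2, F_4, E_6, E_7, E_8$ in turn, reading the Hasse diagrams of the closure order from Spaltenstein's tables (\cite[Sections~II.10 and~IV.2]{Spal}, with the notational conventions of Remark~\ref{r:spal} in mind) and comparing the rows of Tables~\ref{tab:main}, \ref{tab:special} and \ref{tab:clos} class by class.

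The conceptual input is entirely Proposition~\ref{p:clos}; the real work is the first of the two checks, since one must enumerate every tuple lying below each entry of Table~\ref{tab:clos} and verify it against the cases permitted in Tables~\ref{tab:main} and \ref{tab:special}, and the number of tuples grows rapidly with $t$ and the rank of $G$. The natural way to organise this is to exploit that emptiness of $\Delta$ is inherited under passing to lower classes, so that it suffices to identify the maximal tuples with $\Delta$ empty --- which is precisely what Table~\ref{tab:clos} is intended to record. The remaining items (the order lemma and the second check) are routine.
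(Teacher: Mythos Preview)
Your approach is correct and matches the paper's own: the paper presents Corollary~\ref{c:main2} as an immediate reformulation of Theorem~\ref{t:main} via Proposition~\ref{p:clos} and Spaltenstein's closure diagrams, and that is exactly what you have spelled out. One small efficiency: your ``first check'' (that every tuple dominated by an entry of Table~\ref{tab:clos} lies in Tables~\ref{tab:main} or~\ref{tab:special}) is stronger than necessary for the ``if'' direction---it suffices to verify that each entry $Y$ of Table~\ref{tab:clos} itself lies in Tables~\ref{tab:main} or~\ref{tab:special}, since then Theorem~\ref{t:main} gives $\Delta_Y$ empty and the contrapositive of Proposition~\ref{p:clos} immediately yields $G(z)\ne G$ for all $z\in\bar{Y}\supseteq X$.
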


{\small
\begin{table}
\[
\begin{array}{ll} \hline
G & (\mathcal{C}_1', \ldots, \mathcal{C}_t') \\ \hline
G_2 & (A_1,A_1,A_1), (A_1,G_2(a_1)) \\
F_4 & (A_1,A_1,A_1,A_1), (A_1,A_1,A_2), (A_1,\tilde{A}_1,\tilde{A}_1), (A_1,B_3), (\tilde{A}_1,\tilde{A}_2), (\tilde{A}_1, B_2), (A_2,A_2) \\
E_6 & (A_1,A_1,A_1,A_1), (A_1,A_1,A_2), (A_1,A_1^2,A_1^2), (A_1,D_4), (A_1,A_4), (A_1^2,A_3), (A_1^2,A_2^2), (A_2,A_2) \\
E_7 & (A_1,A_1,A_1,A_1), (A_1,A_1,A_2A_1), (A_1,(A_1^3)^{(1)},(A_1^3)^{(1)}),  (A_1,(A_5)^{(1)}), (A_1,D_5(a_1)) \\
&  ((A_1^3)^{(1)},(A_3A_1)^{(1)}), (A_2,A_2A_1) \\
E_8 & (A_1,A_1,A_1,A_1^2), (A_1,A_1,A_3), (A_1,A_1^2,A_2),  (A_1,D_5), (A_1,D_4A_2), (A_1^2,D_4), (A_2,A_3) \\
\hline
\end{array}
\]
\caption{The varieties $Y = \mathcal{C}_1' \times \cdots \times \mathcal{C}_t'$ in Corollary \ref{c:main2}}
\label{tab:clos}
\end{table}
}

Let $V$ be a $kG$-module with $C_V(G) = 0$ and write $\mathcal{C}_i = y_i^G$. Let $C_V(y_i)$ be the $1$-eigenspace of $y_i$ on $V$ and note that $\dim C_V(y_i)$ coincides with the number of Jordan blocks in the Jordan form of $y_i$ on $V$. If   
\begin{equation}\label{e:di}
\sum_{i=1}^t \dim C_V(y_i) > (t-1)\dim V
\end{equation}
then the intersection $\bigcap_i C_V(y_i)$ is nonzero and thus $G(x)$ has a nontrivial fixed space on $V$ for all $x \in X$. In particular, this condition implies that $\Delta$ is empty. 

With this observation in hand, our next result is an immediate corollary of Theorem \ref{t:main}. Here $V$ is the specific Weyl module for $G$ (or its dual) recorded in Table \ref{tab:mod}, where we label a set of fundamental dominant weights for $G$ in the usual way (see \cite{Bou}). Notice that $C_V(G)=0$ and the Jordan form of $y_i$ on $V$ has been determined by Lawther \cite{Law1}, which allows us to compute the sum in \eqref{e:di}. In part (ii) of the corollary, $\tau$ is a graph automorphism of $G$ and we write $\mathcal{C}_i^{\tau}$ for the image of the class $\mathcal{C}_i$ under $\tau$.

{\small
\begin{table}
\[
\begin{array}{cccccc} \hline
G & G_2 & F_4 & E_6 & E_7 & E_8 \\ \hline
V & W_G(\l_1)^* & W_G(\l_4)^* & W_G(\l_1) & W_G(\l_7) & W_G(\l_8) \\
\dim V & 7 & 26 & 27 & 56 & 248 \\ \hline
\end{array}
\]
\caption{The $kG$-module $V$ in Corollary \ref{c:main}} 
\label{tab:mod}
\end{table}}

\begin{corol}\label{c:main}
Let $V$ be the $kG$-module in Table \ref{tab:mod}. Then $\Delta$ is empty if and only if one of the following holds:
\begin{itemize}\addtolength{\itemsep}{0.2\baselineskip}
\item[{\rm (i)}] The inequality in \eqref{e:di} is satisfied.  
\item[{\rm (ii)}] $(G,p) = (G_2,3)$ or $(F_4,2)$, and \eqref{e:di} holds for
$\mathcal{C}_1^{\tau} \times \cdots \times \mathcal{C}_t^{\tau}$.
\item[{\rm (iii)}] $X$ is one of the cases listed in Table \ref{tab:special}, up to reordering and graph automorphisms if $(G,p) = (F_4,2)$.
\end{itemize} 
\end{corol}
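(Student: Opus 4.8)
The plan is to deduce this directly from Theorem~\ref{t:main} together with the elementary fixed-space observation around \eqref{e:di}. Consider first the implication that (i), (ii) or (iii) forces $\Delta = \emptyset$. If \eqref{e:di} holds for $X$, then a dimension count gives $\bigcap_{i=1}^t C_V(y_i) \neq 0$, so $G(x)$ has a nonzero fixed space on $V$ for every $x \in X$; since $C_V(G) = 0$ this yields $G(x) \neq G$ and hence $\Delta = \emptyset$, which is case~(i). For case~(ii), a graph automorphism $\tau$ of $G$ is in particular an abstract automorphism, so $G(x) = G$ if and only if $G(x^\tau) = G$, where $x^\tau = (x_1^\tau, \ldots, x_t^\tau)$; thus $\Delta$ is empty for $X$ precisely when it is empty for $X^\tau = \mathcal{C}_1^\tau \times \cdots \times \mathcal{C}_t^\tau$, and the latter follows from case~(i) applied to $X^\tau$. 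Finally, in case~(iii) the variety $X$ is one of the entries of Table~\ref{tab:special}, which has $\Delta$ empty by Theorem~\ref{t:main} and the listing conventions of Remark~\ref{r:main}(d).

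For the converse, suppose $\Delta = \emptyset$. By Theorem~\ref{t:main}, $X$ agrees, up to reordering and up to graph automorphisms when $(G,p) = (G_2,3)$ or $(F_4,2)$, with one of the cases in Table~\ref{tab:main} or Table~\ref{tab:special}. If $X$ is one of the Table~\ref{tab:special} cases then (iii) holds and we are done, so assume $X$ is one of the Table~\ref{tab:main} cases; it remains to show that each such $X$ satisfies (i) or (ii). Here $\dim C_V(y_i)$ equals the number of Jordan blocks of $y_i$ on $V$, a quantity recorded in the tables of \cite{Law1} for every unipotent class of $G$ and every $p$. So the verification is purely arithmetical: for each row of Table~\ref{tab:main} one reads off the block counts of the classes $\mathcal{C}_i$ on the module $V$ of Table~\ref{tab:mod}, sums them, and compares the total with $(t-1)\dim V$. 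When $(G,p) = (G_2,3)$ or $(F_4,2)$ one also tests the twisted tuple $X^\tau$, using that $\tau$ permutes the unipotent classes (interchanging $A_1$ and $\tilde{A}_1$ in $G_2$, and the analogous long and short classes in $F_4$) and hence permutes the block counts; a row of Table~\ref{tab:main} that fails \eqref{e:di} for $X$ but satisfies it for $X^\tau$ then lands in clause~(ii) rather than clause~(i).

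The only real content is this finite check, and I expect the points needing care to be the following. First, one must match the module $V$ of Table~\ref{tab:mod} with the representation for which \cite{Law1} lists Jordan forms, passing to duals where indicated and using that $\dim C_V(y) = \dim C_{V^*}(y)$. Second, there is the bookkeeping for the graph-automorphism twist in the $(G_2,3)$ and $(F_4,2)$ rows, which is precisely what makes clause~(ii) necessary. Third, one should confirm that the distribution of the $\Delta$-empty cases between Tables~\ref{tab:main} and~\ref{tab:special} is exactly the one for which every Table~\ref{tab:main} entry satisfies \eqref{e:di} (for $X$ or for $X^\tau$) while the Table~\ref{tab:special} entries are the residual cases. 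Beyond these points everything reduces to a routine dimension count.
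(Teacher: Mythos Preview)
Your proposal is correct and matches the paper's approach. The paper states Corollary~\ref{c:main} as an immediate consequence of Theorem~\ref{t:main}, with the arithmetic verification you describe packaged separately as Theorem~\ref{t:fixV}: the inequality \eqref{e:di} holds for the module $V$ of Table~\ref{tab:mod} if and only if the tuple lies in Table~\ref{tab:main} (up to reordering and graph automorphisms), the proof being exactly the routine block-count check against \cite{Law1} that you outline.
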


Note that (ii) is needed here. For example, if $(G,p) = (G_2,3)$ then long and short root elements in $G$ have Jordan form $(J_2^2,J_1^3)$ and $(J_3,J_2^2)$ on $V$, respectively, so \eqref{e:di} holds for 
$(\mathcal{C}_1, \mathcal{C}_2, \mathcal{C}_3) = (A_1,A_1,A_1)$, but not for 
the image $(\tilde{A}_1,\tilde{A}_1,\tilde{A}_1)$ under $\tau$. Similarly, if $(G,p) = (F_4,2)$ then \eqref{e:di} holds for $(A_1,A_1,A_1,A_1)$, but not for 
$(\tilde{A}_1,\tilde{A}_1,\tilde{A}_1,\tilde{A}_1)$.

\begin{remk}\label{r:natural}
The previous corollary reveals that we can identify most cases where $\Delta$ is empty in Theorem \ref{t:main} just by considering the action of the relevant conjugacy class representatives on a suitable $kG$-module. Here we briefly recall that similar results for classical groups have been established in \cite{BGG2,Ger}, with respect to the natural module. 

Let $G$ be a simple algebraic group of classical type over an algebraically closed field $k$ of characteristic $p \geqs 0$ which is not algebraic over a finite field. Assume $p \ne 2$ if $G$ is of type $B_n$. Let $V$ be the natural module for $G$ and set $X = \mathcal{C}_1 \times \cdots \times \mathcal{C}_t$, where $t \geqs 2$ and each $\mathcal{C}_i = y_i^G$ is a non-central conjugacy class. Note that $C_V(G) = 0$, which means that $\Delta$ is empty if \eqref{e:di} holds. In \cite{Ger}, Gerhardt proves that, if $G={\rm SL}(V)$ and $\dim V \geqs 3$, then $\Delta$ is empty if and only if either \eqref{e:di} holds, or $t=2$ and both $y_1$ and $y_2$ have quadratic minimal polynomials on $V$ (if the latter property holds, then for all $x \in X$, every composition factor of $G(x)$ on $V$ is at most $2$-dimensional). A similar result is proved for the groups ${\rm Sp}(V)$ and ${\rm SO}(V)$ in \cite{BGG2} under the assumption that the $y_i$ have prime order modulo $Z(G)$. The analysis of the latter groups is more complicated and several exceptions arise where $\Delta$ is empty and \eqref{e:di} does not hold for the natural module $V$ (see \cite[Tables 1,2]{BGG2}).
\end{remk}

We have noted that $\Delta$ is empty if the inequality in \eqref{e:di} is satisfied for a suitable $kG$-module $V$ with $C_V(G)=0$. In order to study the general case, let $\mathcal{M}$ be a complete set of representatives of the conjugacy classes of closed positive dimensional maximal subgroups of $G$. Then $\mathcal{M}$ is finite and the classification of the subgroups in $\mathcal{M}$ was completed by Liebeck and Seitz in \cite{LS04} (modulo the existence of an additional subgroup $H = F_4$ in $\mathcal{M}$, which arises when $(G,p) = (E_8,3)$; see \cite{CST}). Following \cite{BGG1}, set 
\[
\Delta^+ = \{x \in X \,:\, \dim G(x) > 0 \}
\]
and note that $\Delta = \Delta^+ \cap \Lambda$, where $\Lambda$ is the set of $x \in X$ such that $G(x)$ is not contained in a positive dimensional maximal subgroup of $G$. In addition, we define
\begin{equation}\label{e:XHH}
X_H = \{ x \in X \,:\, \mbox{$G(x) \leqs H^g$ for some $g \in G$} \}
\end{equation}
for an arbitrary closed subgroup $H$ of $G$.
 
By combining \cite[Corollary 4]{BGG1} and \cite[Theorem 1.1]{GMT}, we see that $\Delta^{+}$ is empty if and only if $(G,p) = (G_2,3)$ or $(F_4,2)$, with $t=2$ and $\{\mathcal{C}_1,\mathcal{C}_2\} = \{A_1,\tilde{A}_1\}$, so we may assume $\Delta^+$ is non-empty. Then $\Delta^+$ is a dense subset of $X$ by \cite[Theorem 1]{BGG1} and thus $\Delta$ is non-empty if $\Lambda$ contains a non-empty open subset of $X$. As explained in \cite{BGG1}, we can work with fixed point spaces in order to study the existence (or otherwise) of such a subset of $\Lambda$. 

For $H \in \mathcal{M}$, let $\O$ be the coset variety $G/H$ and let 
$C_{\O}(y) = \{ \a \in \O \,:\, \a^{y} = \a\}$
be the fixed point space of an element $y \in G$ on $\O$. Set 
\[
\a(G,H,y) = \frac{\dim C_{\O}(y)}{\dim \O}
\]
and write 
\[
\Sigma_X(H) = \sum_{i=1}^t \a(G,H,y_i),
\]
where $X = \mathcal{C}_1 \times \cdots \times \mathcal{C}_t$ and $\mathcal{C}_i = y_i^G$ as before. As noted in the proof of \cite[Theorem 5]{BGG1}, if 
\begin{equation}\label{e:fix}
\Sigma_X(H) < t-1
\end{equation}
then $X_H$ is contained in a proper closed subset of $X$. In particular, if the inequality in \eqref{e:fix} holds for all $H \in \mathcal{M}$, then $\Lambda$ contains a non-empty open subset (recall that $\mathcal{M}$ is a finite set) and thus $\Delta$ is non-empty. 

In order to establish the inequality in \eqref{e:fix} for a given subgroup $H \in \mathcal{M}$, we need upper bounds on $\dim C_{\O}(y)$ for elements $y \in G$ of order $p$, where $\O = G/H$. Fixed point spaces for actions of exceptional algebraic groups were studied by Lawther, Liebeck and Seitz \cite{LLS1} in a general setting, but we will often require sharper bounds on $\dim C_{\O}(y)$ for the unipotent elements we are interested in here. To do this, first observe that $\dim C_{\O}(y)=0$ if $H$ does not contain a conjugate of $y$, so we may as well assume $y \in H$. Then \cite[Proposition 1.14]{LLS1} states that
\[
\dim C_{\O}(y) = \dim \O - \dim y^G + \dim (y^G \cap H).
\]
In order to apply this formula, given an element $y \in H$ of order $p$, we need to identify the $G$-class of $y$ and we need to compute $\dim(y^G \cap H)$. In many cases, we can appeal to earlier work of Lawther to do this. Specifically, if $M$ is a maximal closed connected reductive subgroup of $G$, then the $G$-class of each unipotent $M$-class is determined in \cite{Law2} and we will make extensive use of these results. Indeed, this essentially reduces our problem to the following two cases:
\begin{itemize}\addtolength{\itemsep}{0.2\baselineskip}
\item[{\rm (a)}] $H \in \mathcal{M}$ is reductive and $|H:H^0|$ is divisible by $p$;
\item[{\rm (b)}] $H \in \mathcal{M}$ is a parabolic subgroup. 
\end{itemize}

In (a) we need to consider the possible existence of elements $y \in H \setminus H^0$ of order $p$. In this situation, we will often work with the restriction of a suitable $kG$-module $W$ to $H^0$ in order to determine the Jordan form on $W$ of such an element $y$. From here, in almost all cases, we can then identify the $G$-class of $y$ by inspecting \cite{Law1}. 

The case where $H$ is a maximal parabolic subgroup requires special attention. Here we adopt an indirect approach, which involves working with the corresponding permutation character $1^{G_{\s}}_{H_{\s}}$ in order to compute $\dim C_{\O}(y)$, where $\s$ is a Steinberg endomorphism of $G$ and $G_{\s} = G(q)$ for some $p$-power $q$. In turn, this relies heavily on work of Geck and L\"{u}beck \cite{Geck,Lub}, who have very recently completed the computation of the Green functions for finite exceptional groups of Lie type in all characteristics (see Section \ref{s:parab} for more details).

For most varieties $X = \mathcal{C}_1 \times \cdots \times \mathcal{C}_t$ as in Theorem \ref{t:main}, we will show that either \eqref{e:di} is satisfied for the $kG$-module $V$ in Table \ref{tab:mod}, in which case $\Delta$ is empty, or the inequality in \eqref{e:fix} holds for all $H \in \mathcal{M}$ and thus $\Delta$ is non-empty. In this way, the proof of Theorem \ref{t:main} is reduced to the configurations in Table \ref{tab:special}, where in each case the inequality in \eqref{e:di} is not satisfied and $\Sigma_X(H) \geqs t-1$ for some $H \in \mathcal{M}$. We need a different approach to show that $\Delta$ is empty in these special cases. To do this, we formalise a technique which was first introduced in \cite{BGG2} (see the proof of \cite[Lemma 4.1]{BGG2}, for example), which provides an essentially uniform method for handling all of these cases. 

The basic idea is as follows. First we embed a set of class representatives $y_i \in \mathcal{C}_i$ in a carefully chosen closed connected proper subgroup $L$ of $G$ and we then consider the morphism
\[
\varphi: \mathcal{D}_1 \times \cdots \times \mathcal{D}_t \times G \to X, \;\; (a_1,\ldots, a_t,g) \mapsto (a_1^g, \ldots, a_t^g),
\]
where $\mathcal{D}_i = y_i^L$. By studying the fibres of this map, we aim to show that $\varphi$ is a dominant morphism and that every tuple in the image of $\varphi$ topologically generates a subgroup contained in a conjugate of $L$. Then the set $X_L$ defined in \eqref{e:XHH} contains a non-empty open subset of $X$, so $\Delta$ is not dense in $X$ and thus \cite[Theorem 2]{BGG1} implies that $\Delta$ is empty. We refer the reader to Proposition \ref{p:fibre} for more details.

\begin{remk}\label{r:prime}
In Theorem \ref{t:main} we assume that each unipotent class $\mathcal{C}_i = y_i^G$ contains elements of order $p$. To conclude this introductory section, we briefly discuss the more general problem, where the $\mathcal{C}_i$ are arbitrary nontrivial unipotent classes.
\begin{itemize}\addtolength{\itemsep}{0.2\baselineskip}
\item[{\rm (a)}] As noted above, every nontrivial unipotent element has order $p$ if 
$p \geqs h$, where $h$ is the Coxeter number of $G$, so Theorem \ref{t:main} gives a complete solution to the problem under this hypothesis. For the reader's convenience, we recall that $h = 6, 12, 12, 18, 30$ for $G = G_2, F_4, E_6, E_7, E_8$, respectively.
\item[{\rm (b)}] It is straightforward to show that $\Delta$ is empty for every tuple in Table \ref{tab:main} for all $p \geqs 0$. Indeed, if $V$ is the $kG$-module in Table \ref{tab:mod} then by inspecting \cite{Law1} one checks that \eqref{e:di} holds and thus $\bigcap_i C_V(y_i) \ne 0$ in each case. Similarly, \eqref{e:di} holds in each of the following cases, where $t = p = 2$, $\mathcal{C}_1$ is a class of involutions and $\mathcal{C}_2$ contains elements of order $4$:

\vspace{1mm}

\begin{itemize}\addtolength{\itemsep}{0.2\baselineskip}
\item $G = F_4$: $(A_1,(B_2)_2)$, $(A_1,(\tilde{A}_2A_1)_2)$, $(A_1,(C_3(a_1))_2)$, $((\tilde{A}_1)_2, A_2)$
\item $G = E_7$ or $E_8$: $(A_1,(A_3A_2)_2)$
\end{itemize}
\item[{\rm (c)}] Suppose the characteristic $p$ is a good prime for $G$ in the usual sense, so $p \geqs 5$, with $p \geqs 7$ for $G = E_8$. Then one can check that every class $\mathcal{C}_i$ appearing in one of the tuples in Table \ref{tab:special} contains elements of order $p$ (and thus $\Delta$ is empty by Theorem \ref{t:main}), with the exception of the following two cases (up to reordering):

\vspace{1mm}

\begin{itemize}\addtolength{\itemsep}{0.2\baselineskip}
\item $G = E_7$, $p=5$ and $(\mathcal{C}_1,\mathcal{C}_2) = (A_1,(A_5)^{(1)})$
\item $G = E_8$, $p=7$ and $(\mathcal{C}_1,\mathcal{C}_2) = (A_1,D_5)$
\end{itemize}

\vspace{1mm}

\noindent Without some additional work, we cannot resolve these cases by appealing to Proposition \ref{p:fibre}, due to the prime order hypothesis adopted in \cite[Theorem 7]{BGG2} (for the group $L=D_6$) and in the present paper (for $L = E_7$). 

\item[{\rm (d)}] We claim that $\Delta$ is empty in the two special cases highlighted in part (c), which allows us to conclude that $\Delta$ is empty for every tuple in Table \ref{tab:special} when $p$ is a good prime for $G$. We thank Bob Guralnick for suggesting the following argument. 

Let $x = (x_1,x_2) \in X$ and note that $G(x) \ne G$ if $G(x)$ acts reducibly on the adjoint module for $G$. Write $k = R/M$, where $R$ is a ring of algebraic integers and $M$ is a maximal ideal, and lift  $x$ to $z = (z_1,z_2) \in \mathcal{C}_1' \times \mathcal{C}_2'$, where $\mathcal{C}_1'$ and $\mathcal{C}_2'$ are the corresponding unipotent conjugacy classes in $G(R)$, with the same labels as $\mathcal{C}_1$ and $\mathcal{C}_2$. By a standard compactness argument, which relies on the fact that $\Delta$ is empty in the two cases of interest when the characteristic of the underlying field is sufficiently large, we deduce that $G(R)$ is not topologically generated by $z_1$ and $z_2$. In turn, this implies that $\la z_1,z_2\ra$ acts reducibly on the adjoint module for $G(R)$ and hence $\la x_1, x_2\ra$ is reducible on the adjoint module for $G$. Therefore, $G(x) \ne G$ and we conclude that $\Delta$ is empty.

Alternatively, if $G = E_7$ and $(\mathcal{C}_1,\mathcal{C}_2) = (A_1,(A_5)^{(1)})$ then we can use \cite[Corollary 3.20]{BGG2} to show that $\Delta$ is empty for all $p \geqs 0$ (see Remark \ref{r:e7_lie}).

\item[{\rm (e)}] We conjecture that the conclusion to Theorem \ref{t:main} holds in good characteristic, regardless of the orders of the elements in each unipotent class.
\end{itemize}
\end{remk}

\vs

\noindent \textbf{Acknowledgements.} I am indebted to two anonymous referees for their very helpful comments and suggestions on an earlier version of this paper. I also thank Bob Guralnick and Donna Testerman for helpful discussions on the content of this paper, and I am grateful for the generous hospitality of the Institute of Mathematics at the \'{E}cole Polytechnique F\'{e}d\'{e}rale de Lausanne during a research visit in 2022. I also acknowledge the support of the Swiss National Science Foundation, research grant number 207730.

\section{Preliminaries}\label{s:prel}

Here we record some preliminary results which will be useful in the proof of Theorem \ref{t:main}. Throughout this section, $G$ denotes a simple algebraic group over an algebraically closed field $k$ of characteristic $p \geqs 0$.

\subsection{Fixed point spaces and conjugacy classes}\label{ss:fix}

Let $H$ be a closed subgroup of $G$ and consider the natural action of $G$ on the coset variety $\O = G/H$. For $y \in G$ we define
\[
C_{\O}(y) = \{ \a \in \O \,:\, \a^y = \a\}.
\]
As briefly explained in Section \ref{s:intro}, bounds on the dimensions of these fixed point spaces will play a key role in the proof of Theorem \ref{t:main}. In order to obtain such bounds, we will repeatedly apply the following result (see  \cite[Proposition 1.14]{LLS1}). 

\begin{prop}\label{p:dim}
We have 
\[
\dim C_{\O}(y) = \dim \O - \dim y^G + \dim(y^G \cap H)
\] 
for all $y \in H$.
\end{prop}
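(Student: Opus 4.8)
The plan is to exploit the orbit-counting / fibre-dimension philosophy applied to the action of the centralizer $C_G(y)$ on the fixed point space $C_{\O}(y)$. First I would note that since $y \in H$, the coset $H$ itself (viewed as a point of $\O = G/H$) is fixed by $y$, so $C_{\O}(y)$ is non-empty. More generally, a coset $gH$ is fixed by $y$ precisely when $g^{-1}yg \in H$, i.e.\ when $g$ conjugates $y$ into $H$. Thus $C_{\O}(y) = \{ gH : g^{-1}yg \in H\}$, and the natural map $g \mapsto g^{-1}yg$ identifies this set (after quotienting appropriately) with the intersection of the $G$-class $y^G$ with $H$. The precise bookkeeping is what produces the stated dimension formula.

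The key steps, in order, are as follows. (1) Set $Y = \{ g \in G : g^{-1}yg \in H\}$, a closed subvariety of $G$; the image of $Y$ under $G \to G/H$ is exactly $C_{\O}(y)$, and since the fibres of this restricted projection are cosets of $H$ (more precisely, $Y$ is a union of right $H$-cosets because $g \in Y$ and $h \in H$ implies $hg \in Y$ — one checks $Y$ is left-$H$-invariant), we get $\dim C_{\O}(y) = \dim Y - \dim H$. (2) Consider instead the conjugation map $\psi : G \to y^G$, $g \mapsto g^{-1}yg$, which is a surjective morphism with all fibres cosets of $C_G(y)$, so $\dim G = \dim y^G + \dim C_G(y)$ and more generally $\dim \psi^{-1}(Z) = \dim Z + \dim C_G(y)$ for any irreducible (or equidimensional) closed $Z \subseteq y^G$. (3) Observe $Y = \psi^{-1}(y^G \cap H)$, hence $\dim Y = \dim(y^G \cap H) + \dim C_G(y)$. (4) Combine: $\dim C_{\O}(y) = \dim(y^G \cap H) + \dim C_G(y) - \dim H$. (5) Finally substitute $\dim C_G(y) = \dim G - \dim y^G$ and $\dim \O = \dim G - \dim H$ to rewrite the right-hand side as $(\dim G - \dim H) - \dim y^G + \dim(y^G \cap H) = \dim \O - \dim y^G + \dim(y^G \cap H)$, as claimed. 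If one prefers, steps (1)--(3) can be packaged by considering the single morphism $G \times C_{\O}(y) \dashrightarrow y^G \cap H$ or by a direct fibre-dimension argument on $\psi|_Y$.

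The main technical obstacle is the handling of the possibility that $y^G \cap H$ (or $C_{\O}(y)$) is reducible or has components of differing dimension: the equalities above should be read componentwise, or one restricts attention to a component of maximal dimension on each side and checks the correspondence matches top-dimensional components of $C_{\O}(y)$ with top-dimensional components of $y^G \cap H$. Since $C_G(y)$ is irreducible of constant dimension and acts by translation on the fibres of $\psi$, the fibre dimension of $\psi$ is genuinely constant, so passing to an irreducible component $Z$ of $y^G \cap H$ of maximal dimension gives an irreducible component $\psi^{-1}(Z)$ of $Y$ of maximal dimension with $\dim \psi^{-1}(Z) = \dim Z + \dim C_G(y)$, and its image in $\O$ is a component of $C_{\O}(y)$ of dimension $\dim Z + \dim C_G(y) - \dim H$; a symmetric argument starting from a maximal component of $C_{\O}(y)$ shows every component of $C_{\O}(y)$ arises this way, so the maximal dimensions match and the formula holds. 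Beyond this, everything is a routine application of the fibre-dimension theorem for morphisms of varieties, and in fact this is exactly the content of \cite[Proposition~1.14]{LLS1}, which we simply cite.
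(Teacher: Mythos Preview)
The paper does not prove this proposition at all; it simply records it as \cite[Proposition~1.14]{LLS1}. Your proposal ultimately does the same, so in that sense you match the paper exactly. The sketch you add on top is essentially the standard argument and is fine in outline, but two small slips are worth flagging. First, in step~(1) you have the handedness backwards: the fibres of $g \mapsto gH$ are \emph{left} cosets $gH$, so you need $Y$ to be right-$H$-invariant, and indeed $g \in Y$, $h \in H$ gives $(gh)^{-1}y(gh) = h^{-1}(g^{-1}yg)h \in H$; the check you wrote for $hg$ does not go through. Second, $C_G(y)$ need not be irreducible (centralizers can be disconnected), so the phrase ``$C_G(y)$ is irreducible of constant dimension'' is not quite right; what matters is that all fibres of $\psi$ are cosets of $C_G(y)$ and hence equidimensional, which is enough for the fibre-dimension bookkeeping. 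Neither issue affects the validity of the formula.
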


Here $\dim \O = \dim G - \dim H$ and it is also easy to compute $\dim y^G = \dim G - \dim C_G(y)$ if we can identify the $G$-class of $y$. However, the final term $\dim(y^G\cap H)$ is more difficult to calculate, in general, since one needs to understand the embedding of $H$-classes in $G$. In the main setting we are interested in here, with $G$ of exceptional type and $y \in G$ unipotent, we will work extensively with Lawther's results in \cite{Law2} on the fusion of unipotent $H$-classes in $G$ when $H$ is a maximal closed connected reductive subgroup of $G$. Note that if $H$ is reductive (and possibly disconnected) then $y^G \cap H = \bigcup_i y_i^H$ is a finite union of $H$-classes for all $y \in H$ (see \cite{Gur}) and thus 
\[
\dim(y^G \cap H) = \max_i \dim y_i^H
\] 
in this situation.

In order to prove Theorem \ref{t:main} we need detailed information on the unipotent classes in simple algebraic groups and there is an extensive literature to draw upon. First assume $G$ is an exceptional group. Here we refer the reader to \cite{LS_book} and specifically Tables 22.1.1-5, where the relevant conjugacy classes are listed. Throughout this paper, we will adopt the labelling of unipotent classes given in these tables, noting that this sometimes differs from the notation used elsewhere (for example, see Remark \ref{r:main}(a)). The centraliser dimensions are recorded in the third column of each table, so it is easy to compute $\dim y^G$ for each unipotent element $y \in G$. We will also make extensive use of Lawther's work in \cite{Law1}, where he determines the Jordan form of each unipotent element on certain $kG$-modules $V$, including the module defined in Table \ref{tab:mod}. In particular, we can use \cite{Law1} to read off the unipotent classes containing elements of order $p$, recalling that $y$ has order $p$ if and only if every Jordan block in the Jordan form of $y$ on $V$ has size at most $p$.

We will also need some results from \cite{LS_book} on unipotent classes in classical type algebraic groups. Let $G$ be such a group, with natural module $V$. As noted in \cite[Theorem 3.1]{LS_book}, if $p \ne 2$ then each unipotent class $y^G$ is essentially determined by the Jordan form of $y$ on $V$, which also encodes the dimension of the class. The description of unipotent classes is more complicated when $p=2$ and $G$ is a symplectic or orthogonal group (see \cite[Theorem 4.2]{LS_book}). Here the conjugacy classes of involutions were originally determined by Aschbacher and Seitz \cite[Sections 7,8]{AS} and we will often use their notation for class representatives. 

The Zariski closure of a unipotent conjugacy class $y^G$ is a union of unipotent classes and this yields a natural partial order on the set of unipotent classes of $G$, where we write $z^G \preccurlyeq y^G$ if $z^G$ is contained in the closure of $y^G$. In view of Proposition \ref{p:clos} below, we are interested in this closure relation when $G$ is an exceptional type group and here we can appeal to work of Spaltenstein \cite{Spal}, which gives a complete description of the corresponding ordering in this setting. For $G = G_2$, we refer the reader to \cite[Section II.10]{Spal}, while the relevant closure diagrams for the remaining exceptional groups are presented in \cite[Section IV.2]{Spal}.

\begin{rem}\label{r:spal}
As previously remarked, our labelling of unipotent classes (which is consistent with \cite{LS_book}) does not always agree with the notation adopted by Spaltenstein in \cite{Spal}. This sometimes means that extra care is required when interpreting Spaltenstein's closure diagrams. For example, suppose $G = F_4$ and $p=2$. Here \cite{LS_book} uses the notation $\tilde{A}_1$ for the class of short root elements in $G$ and $(\tilde{A}_1)_2$ for the special class of involutions with dimension $22$. But this is opposite to the notation in \cite{Spal}, where $(\tilde{A}_1)_2$ denotes the class of short root elements. In particular, Spaltenstein's closure diagram (see \cite[p.250]{Spal}) shows that the closure of every nontrivial unipotent class contains long or short root elements, which is consistent with \cite[Corollary 3.3]{GMal}.
\end{rem}

We will also need the following version of \cite[Proposition 1.4]{LLS1} on graph automorphisms of order $p$. In Table \ref{tab:graph}, we write $C_H(u)$ for the centraliser of a long root element $u \in H$. 

\begin{prop}\label{p:graph}
Let $G$ be a simple algebraic group of type $A_r$, $E_6$ or $D_4$ over an algebraically closed field of characteristic $p$, where $p=2,2$ or $3$, respectively. Let $\tau$ be a graph automorphism of $G$ of order $p$ and let $y_1, \ldots, y_n$ be a complete set of representatives of the $G$-classes of elements of order $p$ in the coset $G\tau$. Then $n$ and $d_i = \dim y_i^G$ are recorded in Table \ref{tab:graph}, together with the structure of $C_G(y_i)$.
\end{prop}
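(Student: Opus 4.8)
The plan is to treat this as a mild repackaging of \cite[Proposition 1.4]{LLS1} in which the isomorphism type of each centraliser $C_G(y_i)$ is made explicit: first recall the classification of order-$p$ elements in the coset $G\tau$, and then pin down the centralisers by a direct computation in a small module. The starting point is the observation that $y = g\tau \in G\tau$ has order $p$ if and only if the automorphism $\sigma_y = \mathrm{int}(g)\circ\tau$ of $G$ satisfies $\sigma_y^{\,p}=1$, and that $C_G(y) = G^{\sigma_y}$. So the task is to classify, up to $G$-conjugacy, the order-$p$ automorphisms of $G$ lying in the prescribed coset of $\mathrm{Inn}(G)$, together with their fixed subgroups. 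For $p=2$ and $G$ of type $A_r$ or $E_6$ this is precisely the classification of involutory graph (equivalently, graph-inner) automorphisms in characteristic $2$ carried out by Aschbacher and Seitz \cite{AS} and incorporated into \cite{LLS1}; for $G=D_4$ and $p=3$ it is the analogous classification of order-$3$ triality-type automorphisms. From these sources one reads off the number $n$ of classes.

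For each class I would fix the standard representative $y_i$ supplied by \cite{AS,LLS1}. For $A_r = \mathrm{SL}_{r+1}$, take $\tau$ to be the inverse-transpose automorphism relative to a fixed basis, so that the order-$2$ elements of $G\tau$ correspond to invertible symmetric matrices $g$ with $\det g=1$ modulo the congruence action $g\mapsto a g a^{T}$; in characteristic $2$ one also records the Aschbacher--Seitz invariant measuring the failure of the associated bilinear form to be alternating, and $C_G(y_i)$ is the corresponding isometry group. For $E_6$, work with the $27$-dimensional module $W_G(\l_1)$ and the two $\Inndiag$-classes of involutory graph automorphisms over fields of odd characteristic, whose fixed subgroups are $F_4$ and $C_4 = \mathrm{Sp}_8$, together with their characteristic-$2$ degenerations. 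For $D_4$, exploit triality and the chains $G_2 < D_4$ and $A_2 < D_4$ of fixed subgroups, working with one of the $8$-dimensional modules.

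With explicit representatives in hand, the next step is to compute each $C_G(y_i)$: restrict the chosen small $kG$-module to $C_G(y_i)^0$ and read off the connected centraliser as a reductive group of known type directly from the linear algebra, then determine the component group $C_G(y_i)/C_G(y_i)^0$ (a $p$-group in this setting) and the precise isogeny type by examining the action on the module and on the root datum (see also \cite{LS_book}). In the ``mixed'' cases the centraliser is most naturally written as $C_H(u)$ for a long root element $u$ in a classical subgroup $H$, which is the notation introduced just before the statement. Finally, set $d_i = \dim y_i^G = \dim G - \dim C_G(y_i)$ using the centraliser dimensions just obtained, and cross-check these values against \cite{LLS1}.

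The main obstacle is this last step in characteristic $2$ for $A_r$ and $E_6$. There the order-$2$ elements of $G\tau$ behave like unipotent rather than semisimple automorphisms, so their fixed subgroups need not be reductive: the Aschbacher--Seitz type invariants genuinely affect the isomorphism type of $C_G(y_i)$, and one must correctly account for its unipotent radical and its component group. Likewise, for $D_4$ with $p=3$ one must distinguish the triality classes whose fixed subgroup involves $G_2$ from those of type $A_2$, and handle the corresponding unipotent ``thickenings''. In each case, however, this reduces to a finite and explicit computation inside a classical group or inside $F_4$ or $G_2$, so no theoretical input beyond \cite{AS} and \cite{LLS1} is needed.
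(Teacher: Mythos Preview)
The paper does not supply its own proof of this proposition: it is introduced as ``the following version of \cite[Proposition 1.4]{LLS1}'' and is simply stated together with Table~\ref{tab:graph}, so there is nothing to compare your argument against beyond that citation. Your overall plan---reduce to classifying order-$p$ automorphisms in the relevant outer coset, quote \cite{AS} and \cite{LLS1} for the classes, and identify each $C_G(y_i)=G^{\sigma_{y_i}}$ explicitly---is exactly the right strategy and is essentially what \cite{LLS1} does.

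There is, however, a concrete error in what you expect the answer to be. For $D_4$ with $p=3$ you speak of distinguishing ``the triality classes whose fixed subgroup involves $G_2$ from those of type $A_2$''. The class with centraliser of type $A_2$ (more precisely ${\rm PGL}_3$) exists only when $p\neq 3$; in characteristic $3$ that semisimple class degenerates, and the two $G$-classes of order-$3$ elements in $G\tau$ have centralisers $G_2$ and $C_{G_2}(u)$ with $u$ a long root element, exactly as recorded in Table~\ref{tab:graph}. The same phenomenon governs $E_6$ with $p=2$: the second centraliser is not a degeneration of $C_4$ into something of the same type with a unipotent radical attached, but rather $C_{C_4}(u)$. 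In general, when $p$ equals the order of the graph automorphism the non-generic classes are obtained from the generic one (with reductive centraliser $H=B_m$, $C_m$, $F_4$ or $G_2$) by multiplying by a long root element of $H$, and the centraliser is precisely $C_H(u)$. Your sketch will go through cleanly once you replace the $A_2$/$C_4$ expectation with this long-root-element mechanism; see also \cite[Sections 7,8]{AS} and the discussion in \cite{LS_book} for the detailed structure of $C_H(u)$.
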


{\small
\begin{table}
\[
\begin{array}{lcccl} \hline
G & p & n & d_i & C_G(y_i) \\ \hline
A_{2m} & 2 & 1 & m(2m+3) & B_m  \\
A_{2m-1} & 2 & 2 & 2m^2 \mp m-1 & C_m, \, C_{C_m}(u) \\ 
E_6 & 2 & 2 & 26, 42 & F_4, \, C_{F_4}(u)  \\
D_4 & 3 & 2 & 14, 20 & G_2, \, C_{G_2}(u)  \\ \hline
\end{array}
\]
\caption{The classes and centralisers of graph automorphisms of order $p$}
\label{tab:graph}
\end{table}}

\subsection{Subgroup structure}\label{ss:sub}

Let $G$ be a simple algebraic group of exceptional type and let $\mathcal{M}$ be a set of representatives of the conjugacy classes of closed positive dimensional maximal subgroups of $G$. We may write 
\[
\mathcal{M} = \mathcal{P} \cup \mathcal{R},
\]
where the subgroups in $\mathcal{P}$ are parabolic and those in $\mathcal{R}$ are  reductive (and possibly disconnected). Recall that the conjugacy classes of maximal parabolic subgroups are parameterised by the nodes in the Dynkin diagram of $G$; we will use $P_i$ for a maximal parabolic subgroup corresponding to the $i$-th node in the Dynkin diagram (throughout this paper, we adopt the standard labelling of Dynkin diagrams as in \cite{Bou}). The classification of the subgroups in $\mathcal{R}$ was completed by Liebeck and Seitz in \cite{LS04}, modulo the existence of an additional conjugacy class of maximal subgroups of type $F_4$ when $(G,p) = (E_8,3)$; see \cite{CST}. The subgroups in $\mathcal{R}$ are recorded in Table \ref{tab:max}. 

\begin{rem}
In Table \ref{tab:max} we use the notation $\tilde{L}$ to denote a simple factor $L$ of $H^0$ which is generated by short root subgroups of $G$. In addition, if $G = E_r$ then $T$ is a maximal torus of $G$ and $W = N_G(T)/T$ is the Weyl group, where $W$ is isomorphic to ${\rm PGSp}_4(3) = {\rm PSp}_4(3).2$, $2 \times {\rm Sp}_6(2)$ and $2.{\rm O}_{8}^{+}(2) = 2.\O_{8}^{+}(2).2$ for $r = 6,7$ and $8$, respectively.
\end{rem}

\renewcommand{\arraystretch}{1.2}
{\small
\begin{table}
\[
\begin{array}{ll} \hline
G & \mathcal{R} \\ \hline
G_2 & A_2.2, \, \tilde{A}_2.2 \, (p=3), \, A_1\tilde{A}_1, \, A_1 \, (p \geqs 7) \\
F_4 & B_4, \, C_4 \, (p=2), \, A_1C_3 \, (p \geqs 3), \, A_1G_2 \, (p \geqs 3), \, G_2 \, (p=7), \, A_1 \, (p \geqs 13), \, D_4.S_3, \, \tilde{D}_4.S_3 \, (p=2), \, A_2\tilde{A}_2.2 \\ 
E_6 & F_4, \, A_1A_5, \, C_4 \, (p \geqs 3), \, A_2G_2, \, G_2 \, (p \ne 7), \, D_4T_2.S_3,\, A_2^3.S_3, \, A_2.2 \, (p \geqs 5), \, T.W \\
E_7 & A_1D_6, \, A_1F_4, \, G_2C_3, \, A_1G_2 \, (p \geqs 3), \, A_1A_1 \, (p \geqs 5), \,   A_1\, (\mbox{$2$ classes; $p \geqs 17,19$}), \, E_6T_1.2, \, A_7.2 \\ 
& A_2A_5.2,\, A_1^3D_4.S_3, \, (2^2 \times D_4).S_3 \, (p \geqs 3), \, A_1^7.{\rm L}_3(2), \, A_2.2 \, (p \geqs 5), \, T.W \\ 
E_8 & A_1E_7, \, A_2E_6.2, \, D_8, \, G_2F_4, \, F_4 \, (p=3), \, B_2 \, (p \geqs 5), \, A_1 \, (\mbox{$3$ classes, $p \geqs 23,29,31$}), \, A_8.2, \, D_4^2.(S_3 \times 2) \\
& A_4^2.4, \, A_1G_2^2.2 \, (p \geqs 3), \,  A_2^4.{\rm GL}_2(3), \, A_1^8.{\rm AGL}_3(2), \, A_2A_1.2 \, (p \geqs 5), \, A_1 \times S_5 \, (p \geqs 7),\, T.W  \\ \hline
\end{array}
\]
\caption{The collection $\mathcal{R}$ of reductive maximal subgroups of $G$}
\label{tab:max}
\end{table}}
\renewcommand{\arraystretch}{1}

The following result on the maximal overgroups of long root elements will be useful later. Recall that the long root elements comprise the class labelled $A_1$ in \cite[Tables 22.2.1-5]{LS_book}.

\begin{thm}\label{t:long}
Let $H \in \mathcal{R}$. Then $H$ contains a long root element of $G$ only if $H \in \mathcal{L}$, where $\mathcal{L}$ is defined in Table \ref{tab:long}.
\end{thm}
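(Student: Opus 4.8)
The plan is to work through the list of reductive maximal subgroups $H \in \mathcal{R}$ recorded in Table \ref{tab:max} and, for each one, decide whether the closure of the $G$-class of long root elements meets $H$. The key tool is Lawther's work in \cite{Law2}: for each maximal closed connected reductive subgroup $M$ of $G$, the fusion of unipotent $M$-classes into $G$-classes is determined there, so I can simply read off whether the $G$-class labelled $A_1$ (or, equivalently for inclusion purposes, any class whose closure contains $A_1$) is hit by some unipotent class of $H^0$. Since the $A_1$ class is the unique minimal nontrivial unipotent class of $G$, its closure is contained in the closure of every nontrivial unipotent class; hence $H$ contains a long root element of $G$ if and only if \emph{some} nontrivial unipotent element of $H$ lies in the $G$-class $A_1$, and for connected $H$ this is visible directly from the fusion tables in \cite{Law2}.

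First I would dispose of the parabolic case — but that is not needed here since $H \in \mathcal{R}$ — and then split $\mathcal{R}$ into the subgroups that are ``large'' (containing a subsystem $A_1$ generated by long root subgroups of $G$, e.g. $A_1D_6 \leqs E_7$, $A_1E_7 \leqs E_8$, $B_4 \leqs F_4$, etc.) and those that are ``small'' or twisted. For the large ones it is immediate that a long root $A_1$ of the ambient $G$ already sits inside $H^0$, so $H \in \mathcal{L}$. The substantive work is with subgroups whose simple factors are generated by short root subgroups (the $\tilde{L}$ notation), with subgroups of small rank such as the various $A_1$'s, $A_2.2$, $G_2$, $B_2$, $A_1 \times S_5$, and with the finite-index issues: for disconnected $H$ one must also check elements of order $p$ in $H \setminus H^0$, using Proposition \ref{p:graph} and the restriction of a suitable $kG$-module to $H^0$ to pin down the $G$-class, exactly as outlined in Section \ref{s:intro}. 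In particular, for factors generated by short root subgroups one uses the standard fact (and the explicit data in \cite{Law2}) that their root elements map to the short root class $\tilde{A}_1$ of $G$ in types $F_4, G_2$, which does \emph{not} contain $A_1$ in its closure, so such a subgroup fails to contain a long root element unless it has some other long-root-type unipotent element.

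The main obstacle, I expect, will be the bookkeeping for the subgroups of type $A_1$ and for the twisted/disconnected subgroups: for a one-dimensional subgroup $A_1 \leqs G$ the relevant datum is which $G$-class the nontrivial unipotent elements of that particular $A_1$ fall into (this depends on the embedding and is again tabulated in \cite{Law2}), and there are several non-conjugate such subgroups with differing $p$-restrictions. Likewise, for subgroups like $G_2 \leqs F_4$ ($p=7$), $G_2 \leqs E_6$, $F_4 \leqs E_8$ ($p=3$), and the exotic $F_4$ in $E_8$ from \cite{CST}, I would restrict the minimal module of the ambient group to the subgroup, read off Jordan forms of the subgroup's root elements, and compare against \cite{Law1} to see whether the $A_1$ class of $G$ is attained. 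Assembling all of these checks produces the list $\mathcal{L}$ in Table \ref{tab:long}; the argument is a finite case-check with no single hard step, but it is long and requires care with the two notational conventions (\cite{LS_book} versus \cite{Law1,Spal}) and with the disconnected cases.
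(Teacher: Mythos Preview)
Your proposal is correct and follows essentially the same approach as the paper: both proceed by a finite case-check over $\mathcal{R}$, using Lawther's fusion tables in \cite{Law2} (together with \cite{CST} for the exotic $F_4 < E_8$ when $p=3$) to determine whether the $G$-class $A_1$ meets each $H$. The paper's write-up is slightly leaner in that it only treats the subgroups in $\mathcal{R}\setminus\mathcal{L}$ explicitly, handles $A_1\times S_5$ by embedding in a subsystem $A_4^2$ rather than via module restriction, and cites \cite[Proposition~5.12]{BGS} for $(2^2\times D_4).S_3$; the disconnected-component analysis you describe is in fact needed only for this last subgroup, since for the remaining $H\in\mathcal{R}\setminus\mathcal{L}$ the characteristic restrictions force $p\nmid|H:H^0|$.
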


\begin{proof}
We need to rule out the existence of long root elements in each subgroup $H \in \mathcal{R} \setminus \mathcal{L}$ and in most cases we can use \cite{Law2} to do this. For example, suppose $G = E_8$. If $H = F_4$ and $p=3$ then the $G$-class of each unipotent $H$-class is presented in \cite[Table 2]{CST} and we immediately deduce that $H$ does not contain any long root elements. By inspecting \cite[Tables 36, 37]{Law2}, we see that the same conclusion holds when $H = A_2A_1.2$ or $B_2$ (both with $p \geqs 5$), and for $H = A_1$ we can appeal to \cite[Table 27]{Law2}. Finally, suppose $p \geqs 7$ and $H = A_1 \times S_5$. If $y \in H$ has order $p$, then we may embed $y$ in a connected maximal rank subgroup $A_4^2$ of $G$ so that $y = y_1y_2$ and each $y_i \in A_4$ is regular (see \cite[Lemma 1.5]{LS90}, for example). By inspecting \cite[Table 26]{Law2} we deduce that $y$ is contained in the $G$-class $E_8(a_7)$ and thus $H$ does not contain any long root elements.  

The other groups can be handled in the same way and we omit the details. Note that if $G = E_7$ and $H = (2^2 \times D_4).S_3$ (with $p \geqs 3$) then the proof of \cite[Proposition 5.12]{BGS} shows that $H$ does not contain long root elements. 
\end{proof}

\renewcommand{\arraystretch}{1.2}
{\small
\begin{table}
\[
\begin{array}{ll} \hline
G & \mathcal{L} \\ \hline
G_2 & A_2.2, \,  A_1\tilde{A}_1 \\
F_4 & B_4, \, C_4 \, (p=2), \, A_1C_3 \, (p \geqs 3), \, A_1G_2 \, (p \geqs 3), \, D_4.S_3, \, \tilde{D}_4.S_3 \, (p=2), \, A_2\tilde{A}_2.2 \\ 
E_6 & F_4, \, A_1A_5, \, C_4 \, (p \geqs 3), \, A_2G_2, \, D_4T_2.S_3,\, A_2^3.S_3, \, T.W \\
E_7 & A_1D_6, \, A_1F_4, \, G_2C_3, \, E_6T_1.2, \, A_7.2,\, A_2A_5.2,\, A_1^3D_4.S_3, \, A_1^7.{\rm L}_3(2), \, T.W \\ 
E_8 & A_1E_7, \, A_2E_6.2, \, D_8, \, G_2F_4, \, A_8.2, \, D_4^2.(S_3 \times 2),\, A_4^2.4, \, A_1G_2^2.2 \, (p \geqs 3), \,  A_2^4.{\rm GL}_2(3), \, A_1^8.{\rm AGL}_3(2),\, T.W  \\ \hline
\end{array}
\]
\caption{The subgroup collection $\mathcal{L}$ in Theorem \ref{t:long}}
\label{tab:long}
\end{table}}
\renewcommand{\arraystretch}{1}
 
\subsection{Topological generation}\label{ss:top}

For the remainder of Section \ref{s:prel}, let us adopt some of the notation introduced in Section \ref{s:intro}. Let $G$ be a simple algebraic group defined over an algebraically closed field $k$ of characteristic $p \geqs 0$ and let us assume $k$ is not algebraic over a finite field. Let $t \geqs 2$ and set $X = \mathcal{C}_1 \times \cdots \times \mathcal{C}_t$, where each $\mathcal{C}_i = y_i^G$ is a non-central conjugacy class. Given a tuple $x = (x_1, \ldots, x_t) \in X$, let $G(x)$ be the Zariski closure of $\la x_1, \ldots, x_t \ra$ and set
\begin{align*}
\Delta & = \{x \in X \,:\, G(x) = G\} \\
\Delta^{+} & = \{x \in X \,:\, \dim G(x) > 0\}. 
\end{align*}
In addition, let $\Lambda$ be the set of elements $x \in X$ such that $G(x)$ is not contained in a positive dimensional maximal subgroup of $G$. Note that $\Delta = \Delta^+ \cap \Lambda$. 

The following basic observation will be very useful (see \cite[Lemma 2.2]{BGG2}) and it explains why we are interested in the closure relation on unipotent classes discussed in Section \ref{ss:fix}.

\begin{prop}\label{p:clos}
Let $\bar{X}$ be the Zariski closure of $X$ in $G^t$ and assume $G = G(y)$ for some $y \in \bar{X}$. Then $\Delta$ is non-empty.
\end{prop}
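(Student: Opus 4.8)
The plan is to show that the hypothesis "$G = G(y)$ for some $y \in \bar{X}$" forces $\Delta$ to be non-empty by invoking the openness/density dichotomy from \cite[Theorem 2]{BGG1}. First I would recall that $\bar{X}$, being the Zariski closure of the irreducible variety $X = \mathcal{C}_1 \times \cdots \times \mathcal{C}_t$, decomposes as a finite union $\bar{X} = \bigcup_j \bar{\mathcal{C}}_1^{(j)} \times \cdots \times \bar{\mathcal{C}}_t^{(j)}$ where each factor $\bar{\mathcal{C}}_i$ is itself the closure of a unipotent class; in fact $\bar{X}$ is the union of products $\mathcal{D}_1 \times \cdots \times \mathcal{D}_t$ over all tuples of unipotent classes $\mathcal{D}_i$ with $\mathcal{D}_i \preccurlyeq \mathcal{C}_i$. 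In particular the given $y$ lies in one such product $X' = \mathcal{D}_1 \times \cdots \times \mathcal{D}_t$, which is again an irreducible subvariety of $G^t$ of the same shape as $X$, and $G = G(y)$ says precisely that the analogue of $\Delta$ for $X'$ — call it $\Delta'$ — is non-empty.

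The key step is then to deduce that $\Delta$ (for the original $X$) is non-empty from $\Delta' \ne \emptyset$. Here I would argue that $X$ lies in the closure of $X'$... no: rather, $X' \subseteq \bar{X}$ but also each $\mathcal{C}_i \subseteq \bar{\mathcal{D}_i}$ need not hold. Instead the clean route is: $X'$ is contained in $\bar{X}$, and more importantly $X$ is an open dense subset of $\bar{X}$ relative to the component containing it, while $X'$ may be a lower-dimensional piece. So I would invoke \cite[Theorem 2]{BGG1} applied to $X$ itself: that theorem asserts $\Delta$ is non-empty if and only if $\Delta$ is dense in $X$. Equivalently — and this is the form I want — one shows that the set $\{x \in \bar{X} : G(x) = G\}$ is non-empty if and only if its intersection with $X$ is non-empty, because the generating property is a countable intersection of constructible conditions that behaves well under the closure. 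Concretely, for each maximal $H \in \mathcal{M}$ the set $X_H$ of \eqref{e:XHH} is closed in $\bar{X}$ (being the image of a projection from a closed subvariety of $\bar{X} \times G$), and $G(x) \ne G$ forces $x$ to lie in some $X_H$ together with a further countable union of proper closed subsets coming from the non-positive-dimensional case; since $y \notin \bigcup_{H} X_H$ (as $G(y) = G$), the proper closed subset $\bigcup_H X_H$ of $\bar{X}$ cannot contain the irreducible component of $\bar{X}$ through $X$, hence misses a dense open subset of $X$, and \cite[Theorem 1]{BGG1} (density of $\Delta^+$) plus \cite[Theorem 7.1 of BGG2]{BGG2}-type countability arguments finish it.

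The main obstacle I expect is the bookkeeping around the countable union of proper closed subvarieties: showing that the existence of a single generating tuple $y$ anywhere in $\bar{X}$ cannot be "accidental" in the sense of lying in the exceptional countable union, but rather propagates to a generic subset of $X$. This is exactly the content that \cite[Theorem 2]{BGG1} (together with \cite[Theorem 1]{BGG2} on genericity) is designed to supply, so in the write-up I would lean on those theorems as black boxes: the substantive point is merely that $\bar{X}$ is a finite union of irreducible varieties of the same product form, that $X$ is one of its (open, dense in its component) strata, and that the closed non-generation locus is stable under passing between $X$ and $\bar{X}$ — so a witness $y$ in the closure yields, by irreducibility of the relevant component and finiteness of $\mathcal{M}$, a non-empty (indeed dense) set of witnesses inside $X$.
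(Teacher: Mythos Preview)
The paper does not prove this proposition; it simply records it as \cite[Lemma~2.2]{BGG2}. Your final paragraph --- treating \cite[Theorem~2]{BGG1} and \cite[Theorem~1]{BGG2} as black boxes that deliver precisely the passage from ``some $y \in \bar{X}$ generates'' to ``$\Delta$ is non-empty'' --- is therefore essentially what the paper does, and is the right way to handle this in the write-up.

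That said, the concrete argument you sketch before reaching for those black boxes contains a genuine error. You claim that for each $H \in \mathcal{M}$ the set $X_H$ (extended to $\bar{X}$) is closed, ``being the image of a projection from a closed subvariety of $\bar{X} \times G$''. Projection along a non-complete factor takes closed sets only to constructible sets, not to closed sets; equivalently, $X_H$ is the image of the projection from $\{(x,gH) \in \bar{X} \times G/H : x_i \in gHg^{-1}\}$, and this map is closed only when $G/H$ is complete, i.e.\ when $H$ is parabolic. For reductive $H \in \mathcal{R}$ it fails --- already with $G = {\rm SL}_2$, $H = N_G(T)$ and $t = 1$, the set $\bigcup_g H^g$ is the locus of semisimple elements, which is dense and not closed. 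Nor can you repair this by passing to the closure $\overline{X_H}$: the whole point is that $y$ could lie in $\overline{X_H} \setminus X_H$, so $y \notin X_H$ no longer forces $\overline{X_H} \subsetneq \bar{X}$. This is precisely why the genericity theorem of \cite{BGG2} is needed rather than a naive open/closed argument, and it is the substance of \cite[Lemma~2.2]{BGG2}, not merely bookkeeping.
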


We will also need the following result (see \cite[Theorems 1 and 2]{BGG1}). 

\begin{thm}\label{t:plus}
Let $\Gamma = \Delta$ or $\Delta^+$. Then $\Gamma$ is non-empty if and only if it is dense in $X$.
\end{thm}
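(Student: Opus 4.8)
\medskip
\noindent\textbf{Proof sketch.}
One direction is trivial: since each $\mathcal{C}_i$ is non-central, $X = \mathcal{C}_1\times\cdots\times\mathcal{C}_t$ is a non-empty irreducible variety, so any dense subset is non-empty. For the converse, the plan is to show that whenever $\Sigma\ne\emptyset$ it contains a countable intersection of dense open subvarieties of $X$; such an intersection is automatically dense when $k$ is uncountable (as then $X$ is not a countable union of proper closed subvarieties), and the general case follows by a routine specialization argument.

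\smallskip
For $\Sigma = \Delta^{+}$ I would argue as follows. For an integer $N\geqs 1$ set
\[
U_N = \{\, x\in X \,:\, |\la x_1,\ldots,x_t\ra| > N \,\}.
\]
Writing $w$ for a word in $t$ variables and $w(x)$ for the corresponding element of $G(x)$, we have
\[
U_N = \bigcup_{(w_0,\ldots,w_N)}\ \bigcap_{i\ne j} \{\, x\in X \,:\, (w_iw_j^{-1})(x) \ne 1 \,\},
\]
the union running over all $(N+1)$-tuples of distinct words, so $U_N$ is open (a union of finite intersections of complements of word-map zero loci). Since $x\in\Delta^{+}$ if and only if $\la x_1,\ldots,x_t\ra$ is infinite, we get $\Delta^{+} = \bigcap_{N\geqs 1}U_N$; and if $\Delta^{+}\ne\emptyset$ then every $U_N$ is non-empty, hence a dense open subvariety of the irreducible $X$, so $\Delta^{+}$ is of the required form.

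\smallskip
For $\Sigma = \Delta$, assume $\Delta\ne\emptyset$, so $\Delta^{+}\ne\emptyset$ too and $X\setminus\Delta^{+}$ is a countable union of proper closed subvarieties by the previous step. As a tuple generating a finite group already lies in $X\setminus\Delta^{+}$, the identity $\Delta = \Delta^{+}\cap\Lambda$ gives
\[
X\setminus\Delta \;=\; (X\setminus\Delta^{+}) \ \cup \ \bigcup_{H\in\mathcal{M}} X_H ,
\]
where $\mathcal{M}$ is the \emph{finite} set of representatives of the conjugacy classes of positive-dimensional maximal subgroups and $X_H$ is as in \eqref{e:XHH}. Each $X_H$ is constructible (the image under $X\times G/H\to X$ of the closed set $\{(x,gH):x_i^{g}\in H\ \forall i\}$), hence is either contained in a proper closed subvariety of $X$ or dense in $X$. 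I would then complete the argument as follows: if one can rule out ``dense but proper'' for each $H$ with $X_H\ne X$, then picking $x^{0}\in\Delta$ we find $x^{0}\notin X_H$ for every proper $H$, so no $X_H$ equals $X$, so every $X_H$ lies in a proper closed subvariety; hence $X\setminus\Delta$ is a countable union of proper closed subvarieties and $\Delta$ is dense.

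\smallskip
Thus everything reduces to the dichotomy \emph{$X_H$ is all of $X$ or is contained in a proper closed subvariety of $X$}, for $H\in\mathcal{M}$, and this is the step I expect to be the main obstacle. My plan for it is a descent to finite groups of Lie type: $G$, its action on $G/H$, and the classes $\mathcal{C}_i$ are defined over a finite subfield $\mathbb{F}_q$, so $X_H$ is $\mathbb{F}_q$-constructible and, by Lang--Weil, $X_H$ is dense in $X$ exactly when $|X_H(\mathbb{F}_{q^m})| = (1+o(1))|X(\mathbb{F}_{q^m})|$ as $m\to\infty$. Using the Lang--Steinberg theorem to count conjugates of $H_{\s}$ and intersections $\mathcal{C}_i\cap H_{\s}$, one would express $|X_H(\mathbb{F}_{q^m})|$ through the sizes of the fixed point spaces $C_{G/H}(y_i)$, governed by the ratios $\a(G,H,y_i)$, and then show that either $\sum_i\a(G,H,y_i)$ is large enough to force $X_H = X$ or there is a strict drop $\dim X_H < \dim X$. (In characteristic zero one could instead combine Guralnick's theorem that $\Delta$ is open with a specialization argument.) Making these estimates precise, especially at the borderline values of $\sum_i\a(G,H,y_i)$, is where the real difficulty lies.
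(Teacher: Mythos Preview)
The paper does not prove this result; Theorem~\ref{t:plus} is simply quoted from \cite[Theorems~1 and~2]{BGG1}, so there is no in-paper argument to compare against. Your sketch for $\Delta^+$ is along sensible lines, though the passage from ``countable intersection of dense opens'' to ``dense'' genuinely fails over countable $k$ (e.g.\ $k=\overline{\mathbb{F}_p(t)}$, which satisfies the standing hypothesis), so the specialization step you defer is doing real work and is not routine.

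The $\Delta$ case has a more serious gap. Your reduction to the finitely many sets $X_H$ for $H\in\mathcal{M}$ is correct, as is the observation that each $X_H$ is constructible. But the dichotomy you rely on---``either $X_H=X$ or $X_H$ is contained in a proper closed subvariety''---does not follow from constructibility: a constructible subset of an irreducible variety can be dense and proper (witness the image of $(a,b)\mapsto(a,ab)$ on $\mathbb{A}^2$). Your Lang--Weil plan only bounds $\dim\overline{X_H}$ in terms of $\Sigma_X(H)$: when $\Sigma_X(H)<t-1$ one gets $\dim\overline{X_H}<\dim X$ and all is well, but when $\Sigma_X(H)\geqs t-1$ the incidence variety has dimension at least $\dim X$ and point-counting says nothing about whether its projection to $X$ is dominant. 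You flag this borderline as ``where the real difficulty lies'', but it is not a matter of sharpening estimates: the hypothesis $\Delta\ne\emptyset$ enters your argument only to rule out $X_H=X$ and gives no purchase on the ``dense but proper'' possibility, so the scheme as outlined cannot close. The proof in \cite{BGG1} proceeds along different lines.
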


For $H \in \mathcal{M}$, let $\O$ be the coset variety $G/H$ and set 
\begin{equation}\label{e:alpha}
\a(G,H,y) = \frac{\dim C_{\O}(y)}{\dim \O}
\end{equation}
for $y \in G$. Then define 
\begin{equation}\label{e:sig}
\Sigma_X(H) = \sum_{i=1}^t \a(G,H,y_i)
\end{equation}
where $\mathcal{C}_i = y_i^G$ for each $i$. The following result, which is essentially \cite[Theorem 5]{BGG1}, will be a key tool in the proof of Theorem \ref{t:main}. In particular, it establishes a bridge between topological generation and the dimensions of fixed point spaces on coset varieties.

\begin{prop}\label{p:fix}
If $\Delta^+$ is non-empty and $\Sigma_X(H)<t-1$ for all $H \in \mathcal{M}$, then $\Delta$ is non-empty.
\end{prop}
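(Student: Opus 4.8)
The plan is to derive the statement from Theorem~\ref{t:plus} by showing that the hypothesis forces $\Lambda$ to contain a non-empty open subset of $X$. Indeed, since $\Delta^+$ is non-empty, Theorem~\ref{t:plus} gives that $\Delta^+$ is dense in $X$; a dense subset meets every non-empty open set, so once we know $\Lambda$ contains such a set we are done, because any point of $\Delta^+$ lying in that open set lies in $\Delta^+ \cap \Lambda = \Delta$. Now $X \setminus \Lambda = \bigcup_{H \in \mathcal{M}} X_H$ with $X_H$ as in~\eqref{e:XHH}, and $\mathcal{M}$ is finite, so the task reduces to proving that, whenever $\Sigma_X(H) < t-1$, the subset $X_H$ is contained in a proper closed subvariety of $X$.

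To carry this out I would fix $H \in \mathcal{M}$, set $\O = G/H$ (so $\dim \O > 0$), and --- after disposing of the trivial case in which $\mathcal{C}_i \cap H = \emptyset$ for some $i$, where $X_H = \emptyset$ --- introduce the incidence variety
\[
Z = \{(x_1,\ldots,x_t,gH) \in X \times \O \,:\, x_i \in H^g \text{ for all } i\},
\]
with $G$ acting diagonally. The first projection $Z \to X$ has image exactly $X_H$, so it suffices to bound $\dim Z$. The second projection $Z \to \O$ is $G$-equivariant and $G$ acts transitively on $\O$, so all its fibres have the same dimension; the fibre over $gH$ is $\prod_i(\mathcal{C}_i \cap H^g)$, and conjugating by $g^{-1}$ identifies $\mathcal{C}_i \cap H^g$ with $y_i^G \cap H$. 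This gives $\dim Z = \dim \O + \sum_i \dim(y_i^G \cap H)$; substituting Proposition~\ref{p:dim} into each summand, and using $\dim \mathcal{C}_i = \dim y_i^G$ together with $\dim C_{\O}(y_i) = \a(G,H,y_i)\dim \O$, a short calculation collapses this to
\[
\dim Z = \dim X + \dim \O \cdot \big(\Sigma_X(H) - (t-1)\big).
\]
Hence $\Sigma_X(H) < t-1$ forces $\dim Z < \dim X$; since $X$ is irreducible, the closure of the image of $Z$ in $X$ is then a proper closed subvariety $Y_H \subsetneq X$ containing $X_H$.

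Putting the pieces together, under the stated hypothesis every $X_H$ with $H \in \mathcal{M}$ lies in such a $Y_H$, and finiteness of $\mathcal{M}$ makes $\bigcup_H Y_H$ a proper closed subvariety; thus $\Lambda$ contains the non-empty open set $X \setminus \bigcup_H Y_H$, and the argument of the first paragraph then produces a point of $\Delta$. The only step requiring real care is the fibre-dimension computation for $Z \to \O$ and its consolidation via Proposition~\ref{p:dim} into the displayed identity for $\dim Z$ (one must also be slightly careful that a constructible set of dimension $< \dim X$ lies in a proper closed subvariety, which is where irreducibility of $X$ enters); the rest is formal, and this argument is essentially that of \cite[Theorem~5]{BGG1}.
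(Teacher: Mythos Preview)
Your proof is correct and follows exactly the same route as the paper's: show each $X_H$ lies in a proper closed subset of $X$, take the finite union over $\mathcal{M}$ to get a non-empty open set inside $\Lambda$, and intersect with the dense set $\Delta^+$ via Theorem~\ref{t:plus}. The only difference is cosmetic: where the paper simply invokes ``the proof of \cite[Theorem~5]{BGG1}'' for the fact that $\Sigma_X(H)<t-1$ forces $X_H$ into a proper closed subvariety, you have (correctly) unpacked that black box into the incidence-variety/dimension argument.
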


\begin{proof}
Fix a subgroup $H \in \mathcal{M}$. Since $\Sigma_X(H)<t-1$, the proof of \cite[Theorem 5]{BGG1} shows that
\[
X_H = \{ x \in X \,:\, \mbox{$G(x) \leqs H^g$ for some $g \in G$}\}
\]
is contained in a proper closed subset of $X$. Therefore, there is a non-empty open subset $U_H$ of $X$ such that for all $x \in U_H$, $G(x)$ is not contained in any conjugate of $H$. Since $\mathcal{M}$ is finite and the inequality $\Sigma_X(H)<t-1$ holds for all $H \in \mathcal{M}$, it follows that 
$\bigcap_{H \in \mathcal{M}} U_H$ is a non-empty open subset of $X$ contained in $\Lambda$. Finally, we recall that $\Delta^+$ is dense in $X$ by Theorem \ref{t:plus} and thus $\Delta = \Delta^+ \cap \Lambda$ is non-empty.
\end{proof}

In order to apply Proposition \ref{p:fix} in the proof of Theorem \ref{t:main}, we will need the following result. This follows by combining  \cite[Corollary 4]{BGG1} and \cite[Theorem 1.1]{GMT}.

\begin{thm}\label{t:plus2}
Let $G$ be a simple algebraic group of exceptional type and assume $X = \mathcal{C}_1 \times \cdots \times \mathcal{C}_t$, where each $\mathcal{C}_i$ is a nontrivial unipotent class. Then $\Delta^+$ is empty if and only if $(G,p) = (G_2,3)$ or $(F_4,2)$, with $t=2$ and $\{\mathcal{C}_1,\mathcal{C}_2\} = \{A_1,\tilde{A}_1\}$.
\end{thm}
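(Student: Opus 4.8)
The plan is to deduce Theorem~\ref{t:plus2} by combining the reduction in \cite[Corollary 4]{BGG1} with the classification \cite[Theorem 1.1]{GMT} of the finite subgroups of an exceptional group generated by two root elements. The first move is to unwind the definition: $\Delta^+$ is empty precisely when $\dim G(x)=0$ --- equivalently, $G(x)$ is finite --- for every $x\in X$, and in that case $\Delta=\emptyset$ as well. So the argument splits into an ``if'' direction (emptiness of $\Delta^+$ in the listed cases) and a converse.

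For the ``if'' direction, suppose $(G,p)=(G_2,3)$ or $(F_4,2)$, that $t=2$, and that $\{\mathcal{C}_1,\mathcal{C}_2\}=\{A_1,\tilde{A}_1\}$, so a representative $x=(x_1,x_2)$ consists of a long and a short root element of $G$ in some order. By \cite[Theorem 1.1]{GMT} the subgroup $\langle x_1,x_2\rangle$ is finite, hence $\dim G(x)=0$ and $x\notin\Delta^+$; as $x$ was arbitrary, $\Delta^+=\emptyset$. The mechanism one expects behind \cite[Theorem 1.1]{GMT} here is that in these characteristics the special isogeny of $G$ ties the classes $A_1$ and $\tilde{A}_1$ closely together, and a dimension count then shows that a long and a short root element are always simultaneously conjugate into a Borel subgroup; thus $\langle x_1,x_2\rangle$ lies in a maximal unipotent subgroup $U$ of $G$, and every finitely generated subgroup of $U(k)$ is finite in characteristic $p$ (filter $U$ by root subgroups, each isomorphic to $\mathbb{G}_a$, and induct, using that a finitely generated subgroup of $(k,+)$ is finite).

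For the converse I would argue contrapositively, producing a tuple generating a positive-dimensional subgroup whenever $X$ is not of the stated form. First I would dispose of $t\geqs 3$: among any three of the classes, some unordered pair $\{\mathcal{C}_i,\mathcal{C}_j\}$ with $i\ne j$ fails to equal $\{A_1,\tilde{A}_1\}$ (three classes cannot pairwise form a long/short pair, by the pigeonhole principle), so it suffices to find a good tuple for that pair, which reduces matters to $t=2$. For $t=2$, \cite[Corollary 4]{BGG1} shows that unless \emph{both} $\mathcal{C}_1$ and $\mathcal{C}_2$ lie in a short explicit list of minimal unipotent classes --- for the exceptional groups, the root element classes $A_1$, together with $\tilde{A}_1$ when the two are distinct (i.e.\ in $G_2$ and $F_4$) --- a generic pair $x\in\mathcal{C}_1\times\mathcal{C}_2$ already generates a positive-dimensional subgroup, so $\Delta^+\ne\emptyset$. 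In the residual range, where $\mathcal{C}_1,\mathcal{C}_2$ are root element classes, \cite[Theorem 1.1]{GMT} is decisive: $\langle x_1,x_2\rangle$ is finite for \emph{all} root elements $x_1\in\mathcal{C}_1$, $x_2\in\mathcal{C}_2$ only when $(G,p)=(G_2,3)$ or $(F_4,2)$ with opposite root lengths; if instead $\mathcal{C}_1=\mathcal{C}_2=A_1$, or $\mathcal{C}_1=\mathcal{C}_2=\tilde{A}_1$, a pair of root elements lying in opposite Borel subgroups generates an $A_1$-type, hence positive-dimensional, subgroup. This forces $\{\mathcal{C}_1,\mathcal{C}_2\}=\{A_1,\tilde{A}_1\}$ and $(G,p)\in\{(G_2,3),(F_4,2)\}$, completing the argument.

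I expect the main obstacle to be the non-elementary content of \cite[Theorem 1.1]{GMT}: the \emph{uniform} finiteness assertion that in $G_2$ with $p=3$ and in $F_4$ with $p=2$, \emph{every} pair consisting of a long root element and a short root element generates a finite subgroup. This is a statement about the whole (uncountable) product variety $A_1\times\tilde{A}_1$, not just about finitely many elements, and it genuinely relies on the special isogeny available in these characteristics --- in good characteristic the analogous dimension estimate fails and a generic such pair generates a positive-dimensional subgroup. By comparison, discarding $t\geqs 3$ and checking that outside the minimal classes a generic pair of unipotent elements generates a positive-dimensional subgroup should be routine, being available from \cite[Corollary 4]{BGG1} together with the fixed-point and dimension estimates recalled in Section~\ref{s:prel}.
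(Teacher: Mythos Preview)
Your proposal is correct and follows exactly the approach of the paper, which simply states that the result follows by combining \cite[Corollary~4]{BGG1} and \cite[Theorem~1.1]{GMT}. Your expansion of how these two ingredients fit together---reducing $t\geqs 3$ to $t=2$, invoking \cite{BGG1} outside the root-element classes, and appealing to \cite{GMT} for the residual cases---is a faithful unpacking of that one-line citation, though the paper itself provides no further detail.
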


To conclude this preliminary section, we state and prove the following result. This is based on a method first introduced in the proof of \cite[Lemma 4.1]{BGG2} and it will turn out to be a very useful tool in the proof of Theorem \ref{t:main}. Indeed, we will use it to show that $\Delta$ is empty for every case recorded in Table \ref{tab:special}. This result applies in the general setting, where $X = \mathcal{C}_1 \times \cdots \times \mathcal{C}_t$ and the $\mathcal{C}_i$ are arbitrary conjugacy classes.

\begin{prop}\label{p:fibre}
Suppose there exists a closed connected subgroup $L$ of $G$ and a tuple $x = (x_1, \ldots, x_t) \in X$ such that all of the following conditions are satisfied:
\begin{itemize}\addtolength{\itemsep}{0.2\baselineskip}
\item[{\rm (i)}] $M = N_G(L)$ is connected and $x_i \in M$ for all $i$.
\item[{\rm (ii)}] $G(x)^0 = L$ and $G(y)^0 \leqs L$ for all $y \in Y$, where $Y = \mathcal{D}_1 \times \cdots \times \mathcal{D}_t$ and $\mathcal{D}_i = x_i^M$.
\item[{\rm (iii)}] $\dim X - \dim Y = \dim G - \dim M$.
\end{itemize}
Then $\Delta$ is empty.
\end{prop}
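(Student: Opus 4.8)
The plan is to exploit the morphism
\[
\varphi \colon Y \times G \to X, \qquad (a_1,\ldots,a_t,g) \mapsto (a_1^g,\ldots,a_t^g),
\]
which is well-defined by condition (i), since each $\mathcal D_i = x_i^M \subseteq x_i^G = \mathcal C_i$. The key idea is to show that $\varphi$ is dominant, so that its image contains a non-empty open subset of $X$, and that \emph{every} tuple in the image of $\varphi$ topologically generates a subgroup contained in a conjugate of $L$. The first point will give us, via condition (iii), a dimension count showing the fibres of $\varphi$ are large enough to force dominance; the second point then shows $X_L$ (in the notation of \eqref{e:XHH}) contains a non-empty open subset of $X$, so $X_L$ is not contained in a proper closed subset, and by the argument behind Proposition \ref{p:fix} this prevents $\Delta$ from being dense in $X$. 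Since $\Delta$ is non-empty if and only if it is dense (Theorem \ref{t:plus}), we conclude $\Delta$ is empty.

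First I would address the second point, which is the more conceptual one. Take $(b_1,\ldots,b_t) = \varphi(a_1,\ldots,a_t,g)$ with $a_i \in \mathcal D_i$. Since $a_i \in M = N_G(L)$ and $M$ is connected, the subgroup $\langle a_1,\ldots,a_t\rangle$ is contained in $M$, so its Zariski closure $H$ satisfies $H \leqs M$ and hence $H^0 \leqs M$; but condition (ii) (applied to the tuple $y = (a_1,\ldots,a_t) \in Y$) gives $H^0 = G(y)^0 \leqs L$. Conjugating by $g$, the group $G(b_1,\ldots,b_t)^0 = (G(y)^0)^g \leqs L^g$, so $(b_1,\ldots,b_t) \in X_L$. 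Thus $\operatorname{im}\varphi \subseteq X_L$.

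Next I would prove dominance of $\varphi$ by a fibre-dimension estimate. The point is to locate, inside a suitable fibre, the contribution of $G$ acting by simultaneous conjugation modulo the stabiliser. Fix the specific tuple $x = (x_1,\ldots,x_t)$ from the hypothesis; then $x = \varphi(x_1,\ldots,x_t,1)$, so $x \in \operatorname{im}\varphi$. For $g$ ranging over $G$, the tuples $\varphi(x_1,\ldots,x_t,g) = (x_1^g,\ldots,x_t^g)$ all lie in the orbit of $x$ under simultaneous conjugation, and the stabiliser of $x$ in $G$ contains $C_G(x_1)\cap\cdots\cap C_G(x_t) \supseteq Z(G)$; more to the point, since $G(x)^0 = L$ by condition (ii), the simultaneous centraliser $\bigcap_i C_G(x_i) = C_G(\langle x_1,\ldots,x_t\rangle)$ equals $C_G(L)$ (up to finite index), and the orbit $x^G$ (simultaneous conjugation) has dimension $\dim G - \dim C_G(L)$. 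Meanwhile the fibre $\varphi^{-1}(x)$ contains $\{(x_1^{g^{-1}},\ldots,x_t^{g^{-1}},g) : g \in N_G(L)\cdot C_G(L)\}$, roughly a copy of $M$ fattened by $C_G(L)$: indeed if $g \in M$ then each $x_i^{g}$ lies back in $\mathcal D_i$, so $(x_1^{g^{-1}}\!,\ldots, x_t^{g^{-1}}\!, g)$ is a genuine point of $Y\times G$ mapping to $x$. Hence $\dim \varphi^{-1}(x) \geqs \dim M$. Combining with the general inequality $\dim \operatorname{im}\varphi \geqs \dim(Y\times G) - \dim\varphi^{-1}(x) = \dim Y + \dim G - \dim\varphi^{-1}(x)$ and the bound $\dim\varphi^{-1}(x) \geqs \dim M$, we would in fact want the \emph{reverse}: a generic fibre has dimension $\leqs \dim M$, forcing
\[
\dim \operatorname{im}\varphi \geqs \dim Y + \dim G - \dim M = \dim X
\]
by condition (iii), whence $\operatorname{im}\varphi$ is dense in the irreducible variety $X$. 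So the real content is to show the \emph{generic} fibre of $\varphi$ has dimension exactly $\dim M$ (equivalently, $\leqs \dim M$); condition (iii) is precisely calibrated so that this makes $\varphi$ dominant.

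The main obstacle, then, is the fibre-dimension bound: one must show that for $(b_1,\ldots,b_t)$ in a suitable open subset of $\operatorname{im}\varphi$, the fibre $\varphi^{-1}(b_1,\ldots,b_t)$ has dimension at most $\dim M$. The natural approach is to parametrise the fibre: if $\varphi(a,g) = \varphi(a',g') = (b_i)$, then $a_i^{g} = a_i'^{\,g'}$ for all $i$, i.e. $h = g'g^{-1}$ simultaneously conjugates $a$ to $a'$, both tuples lying in $Y$; so $h \in N_G(\langle a_1,\ldots,a_t\rangle^{-\text{closure}})$, and using $G(a)^0 = L$ (condition (ii)) together with the connectedness of $M = N_G(L)$ one argues $h \in M$ (up to the finite-index subtlety, $h$ normalises $L = G(a)^0$). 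This pins the fibre over $(b_i)$ down to a single $M$-coset's worth of $(a,g)$, giving dimension $\dim M$. The finite-index issues — passing between $\langle a_i\rangle$ and its closure, between the closure and its identity component $L$, and between $C_G(x)$ and $C_G(L)$ — are where care is needed, but each only affects the count by components, not dimension, so the dimension equality $\dim\varphi^{-1}(\text{generic}) = \dim M$ survives. With dominance established, $\operatorname{im}\varphi$ is a constructible dense subset of $X$, hence contains a non-empty open subset $U$; since $U \subseteq X_L$ and $X_L$ thus meets every non-empty open set, $X \setminus X_L$ has empty interior, so $X_L$ is not contained in a proper closed subvariety. Arguing as in Proposition \ref{p:fix}, no non-empty open subset of $X$ can lie in $\Lambda$, so $\Delta = \Delta^+ \cap \Lambda$ is not dense; by Theorem \ref{t:plus}, $\Delta$ is empty. $\qed$
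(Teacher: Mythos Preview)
Your overall architecture is exactly that of the paper: define the conjugation morphism $\varphi\colon Y\times G\to X$, show its image avoids $\Delta$, and prove dominance via a fibre-dimension count calibrated by condition (iii). However, your execution of the fibre bound has a genuine gap.

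You try to bound the dimension of a \emph{generic} fibre by arguing: if $\varphi(a,g)=\varphi(a',g')$, then $h=g(g')^{-1}$ conjugates $a$ to $a'$, and ``using $G(a)^0=L$ (condition (ii))'' one concludes $h\in N_G(L)=M$. But condition (ii) does \emph{not} say $G(a)^0=L$ for arbitrary $a\in Y$; it says only $G(a)^0\leqs L$, with equality guaranteed solely for the distinguished tuple $x$. For a generic $a$ the group $G(a)^0$ could be strictly smaller than $L$, and then $h$ need not normalise $L$ at all; this is not a ``finite-index subtlety'' but a real obstruction. (Relatedly, your assertion that $h$ lies in the normaliser of $\overline{\langle a_1,\ldots,a_t\rangle}$ is already wrong: $h$ carries this closure to $\overline{\langle a_1',\ldots,a_t'\rangle}$, which is a priori a different subgroup.)

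The paper's fix is simple and clean: compute only the fibre over the \emph{specific} point $x$, where the full strength of (ii) is available. If $(a,g)\in\varphi^{-1}(x)$ then $a_i^g=x_i$, so $G(x)=G(a)^g$ and hence
\[
L=G(x)^0=(G(a)^0)^g\leqs L^g,
\]
using $G(a)^0\leqs L$ from (ii). Since $L$ is closed and connected, $L\leqs L^g$ forces $L=L^g$, so $g\in N_G(L)=M$. This shows $\varphi^{-1}(x)$ is precisely $\{(x_1^{g^{-1}},\ldots,x_t^{g^{-1}},g):g\in M\}$, of dimension $\dim M$. Now the standard lower bound on fibre dimensions, applied at this single point, gives
\[
\dim M=\dim\varphi^{-1}(x)\geqs\dim(Y\times G)-\dim\overline{\operatorname{im}\varphi},
\]
and (iii) then forces $\dim\overline{\operatorname{im}\varphi}=\dim X$, i.e.\ dominance. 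Finally, for any $z\in\operatorname{im}\varphi$ one has $G(z)^0\leqs L^g$ for some $g$, so $G(z)\ne G$; hence the open set $U\subseteq\operatorname{im}\varphi$ misses $\Delta$ entirely, $\Delta$ is not dense, and Theorem \ref{t:plus} finishes. You do not need to route the conclusion through $X_L$ or $\Lambda$ (and indeed your claim $U\subseteq X_L$ is slightly off, since you only get $G(z)^0\leqs L^g$, not $G(z)\leqs L^g$).
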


\begin{proof}
Consider the morphism
\[
\varphi: Y \times G \to X, \;\; (d_1,\ldots,d_t,g) \mapsto (d_1^g, \ldots, d_t^g)
\]
and set $Z = \{ (x_1^{a^{-1}}, \ldots, x_t^{a^{-1}},a) \,:\, a \in M\}$. Then $Z$ is contained in the fibre $\varphi^{-1}(x)$ and we claim that $Z = \varphi^{-1}(x)$. To see this, let $(x_1^{g^{-1}}, \ldots, x_t^{g^{-1}},g) \in \varphi^{-1}(x)$ be an arbitrary element, so $g \in G$ and for each $i$ we may write $x_i^{g^{-1}} = x_i^{a_i}$ for some $a_i \in M$. Set $y = (x_1^{a_1}, \ldots, x_t^{a_t}) \in Y$. Then 
\[
\la x_1, \ldots, x_t \ra = \la x_1^{a_1}, \ldots, x_t^{a_t} \ra^g
\]
and using the conditions in (ii) we deduce that
\[
L = G(x)^0 = (G(y)^0)^g \leqs L^g.
\]
Since $L$ is connected, it follows that $L = L^g$ and thus $g \in N_G(L)$, which is equal to $M$ by (i). This justifies the claim and we conclude that $\dim \varphi^{-1}(x) = \dim M$. 

Next observe that $\varphi$ is a morphism of irreducible varieties, so we have
\[
\dim \varphi^{-1}(x) \geqs \dim (Y \times G) - \dim \overline{{\rm im}(\varphi)} \geqs \dim (Y \times G) - \dim X = \dim M,
\]
where the final equality holds by (iii). But we have already shown that $\dim \varphi^{-1}(x) = \dim M$, so this implies that $\dim \overline{{\rm im}(\varphi)} = \dim X$ and thus $\varphi$ is dominant. As a consequence, ${\rm im}(\varphi)$ contains a non-empty open subset $U$ of $X$. 

Finally, if $z = (d_1^g, \ldots, d_t^g) \in {\rm im}(\varphi)$, then 
$G(z)^0 = (G(y)^0)^g \leqs L^g$ with $y = (d_1, \ldots, d_t) \in Y$ and thus $G(z) \ne G$. It follows that $U \cap \Delta$ is empty, whence $\Delta$ is non-dense in $X$ and therefore empty by Theorem \ref{t:plus}.
\end{proof}

\section{Fixed spaces}\label{s:fixV}

Let $G$ be a simple algebraic group of exceptional type over an algebraically closed field $k$ of characteristic $p>0$ and assume $k$ is not algebraic over a finite field. Fix an integer $t \geqs 2$ and set $X = \mathcal{C}_1 \times \cdots \times \mathcal{C}_t$, where each $\mathcal{C}_i = y_i^G$ is a unipotent conjugacy class of elements of order $p$. Let $V$ be the $kG$-module defined in Table \ref{tab:mod} and let $C_V(y_i)$ be the fixed space of $y_i$ on $V$, in which case $\dim C_V(y_i)$ coincides with the number of Jordan blocks in the Jordan form of $y_i$ on $V$. Note that 
\[
C_V(G) = \{ v \in V \,:\, \mbox{$v^g = v$ for all $g \in G$} \} = 0.
\]

The purpose of this section is to record the following result. Since the Jordan form on $V$ of every unipotent element in $G$ is recorded in \cite{Law1}, the proof is a routine exercise.

\begin{thm}\label{t:fixV}
Let $V$ be the $kG$-module in Table \ref{tab:mod}. Then 
\[
\sum_{i=1}^t \dim C_V(y_i) > (t-1)\dim V
\]
if and only if $(\mathcal{C}_1, \ldots, \mathcal{C}_t)$ is one of the cases recorded in Table \ref{tab:main}, up to reordering and graph automorphisms if $(G,p) = (G_2,3)$ or $(F_4,2)$. 
\end{thm}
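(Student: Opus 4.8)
The plan is to treat this as a finite, mechanical verification. The statement asserts an equivalence: the inequality $\sum_{i=1}^t \dim C_V(y_i) > (t-1)\dim V$ holds exactly for the tuples listed in Table~\ref{tab:main}. Rewriting the inequality, since $\dim C_V(y_i) = \dim V - \operatorname{codim} C_V(y_i)$, it becomes
\[
\sum_{i=1}^t \operatorname{codim} C_V(y_i) < \dim V,
\]
so the first step is to define, for each unipotent class $\mathcal{C}$ of elements of order $p$ in $G$, the quantity $c(\mathcal{C}) = \dim V - \dim C_V(y)$ (the number of non-trivial Jordan blocks, counted with the obvious weight — more precisely $\dim V$ minus the number of Jordan blocks), read off directly from the relevant tables in \cite{Law1}. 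The inequality to be checked is then simply $\sum_i c(\mathcal{C}_i) < \dim V$.

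The second step is to observe a monotonicity reduction: $c(\mathcal{C})$ is, by inspection of \cite{Law1}, weakly monotone with respect to the closure order on unipotent classes (a smaller class has no more non-trivial Jordan blocks than a larger one). Consequently, if a tuple $(\mathcal{C}_1,\ldots,\mathcal{C}_t)$ satisfies the inequality, so does any tuple obtained by replacing some $\mathcal{C}_i$ by a class lower in the closure order, and by appending further copies of the minimal class $A_1$ (long root elements) one sees that for fixed $G$ there is an absolute bound on $t$ beyond which the inequality fails. This lets me restrict attention to small $t$ and, for each $t$, to tuples of classes whose $c$-values are individually small — in practice each $\mathcal{C}_i$ must have $c(\mathcal{C}_i) < \dim V$, and the total must stay under $\dim V$, which for $G=G_2$ ($\dim V = 7$), $F_4$ ($26$), $E_6$ ($27$), $E_7$ ($56$), $E_8$ ($248$) leaves only finitely many candidate tuples.

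The third step is the enumeration itself: for each $G$, list the unipotent classes containing elements of order $p$ (this depends on $p$, and the relevant condition is read off from \cite{Law1} as in Remark~\ref{r:main}(b)), compute $c(\mathcal{C})$ for each, sort, and then run through all tuples whose $c$-sum is below $\dim V$, checking that the resulting list coincides with Table~\ref{tab:main}. Care is needed over the $p$-dependence: some classes (e.g. $A_2$ in $E_8$) change Jordan type with $p$, so the value $c(\mathcal{C})$ — and hence membership in the list — can vary, which is why the entries of Table~\ref{tab:main} carry conditions on $p$. The cases $(G,p)=(G_2,3)$ and $(F_4,2)$ require the extra observation that the module $V$ in Table~\ref{tab:mod} is not self-dual-stable under the exceptional graph automorphism $\tau$ (it interchanges the long and short root classes), so the inequality for $\mathcal{C}_1\times\cdots\times\mathcal{C}_t$ and for $\mathcal{C}_1^\tau\times\cdots\times\mathcal{C}_t^\tau$ need not agree; both must be recorded, which is the source of the ``up to graph automorphisms'' clause.

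I do not expect a genuine mathematical obstacle here — the result is, as the paper says, a routine exercise once \cite{Law1} is in hand. The only real work is bookkeeping: ensuring the class lists are complete for each $(G,p)$, that the $p$-conditions attached to each tuple are exactly right, and that nothing is double-counted or missed under reordering and graph automorphisms. The most error-prone point is the interaction of the $p$-dependence of $\dim C_V(y_i)$ with the $p$-dependence of ``contains elements of order $p$'': a class may contribute to the list for one range of $p$ and fail to contain order-$p$ elements for another, so each candidate tuple must be checked against both constraints simultaneously, and the final table entries reconciled with Table~\ref{tab:main}.
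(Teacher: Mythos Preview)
Your proposal is correct and takes essentially the same approach as the paper: the paper's proof is a single sentence stating that, since every Jordan form on $V$ is recorded in \cite{Law1}, the verification is routine, and your plan is precisely an explicit organisation of that routine check. Your monotonicity observation (that $c(\mathcal{C}) = \dim V - \dim C_V(y)$ is weakly increasing along the closure order, which follows from lower semicontinuity of rank rather than needing inspection) and the resulting bound on $t$ are natural ways to structure the enumeration that the paper leaves implicit; one small inaccuracy is that Table~\ref{tab:main} does not carry explicit $p$-conditions on its entries --- the $p$-dependence enters only through the standing hypothesis that each $\mathcal{C}_i$ contains elements of order $p$.
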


\begin{cor}\label{c:fixV}
The set $\Delta$ is empty for every case $X$ arising in Table \ref{tab:main}.
\end{cor}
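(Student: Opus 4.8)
The plan is to obtain Corollary \ref{c:fixV} as an immediate consequence of Theorem \ref{t:fixV} together with the elementary fixed-vector observation recorded around \eqref{e:di} in the introduction. Fix one of the varieties $X = \mathcal{C}_1 \times \cdots \times \mathcal{C}_t$ listed in Table \ref{tab:main}, write $\mathcal{C}_i = y_i^G$, and let $V$ be the $kG$-module in Table \ref{tab:mod}, so that $C_V(G) = 0$. By Theorem \ref{t:fixV}, after possibly reordering the $\mathcal{C}_i$ (which has no effect on whether $\Delta$ is empty) and, in the two cases $(G,p) = (G_2,3)$ and $(F_4,2)$, after possibly replacing the tuple by its image under a graph automorphism $\tau$ — in which situation one works with the twisted module $V^\tau$ in place of $V$, noting $\dim V^\tau = \dim V$ and $C_{V^\tau}(G) = 0$ — the inequality $\sum_{i=1}^t \dim C_V(y_i) > (t-1)\dim V$ holds.

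Next I would spell out the fixed-vector argument in detail. Since $\dim C_V(\cdot)$ is constant on each $G$-conjugacy class, for an arbitrary tuple $x = (x_1,\ldots,x_t) \in X$ we have $\dim C_V(x_i) = \dim C_V(y_i)$ for every $i$, and hence $\sum_{i=1}^t \operatorname{codim}_V C_V(x_i) < \dim V$. A standard dimension count therefore shows that the subspace $W = \bigcap_{i=1}^t C_V(x_i)$ is nonzero. As $W$ is fixed pointwise by each $x_i$, it is fixed pointwise by $\la x_1,\ldots,x_t\ra$ and so by its Zariski closure $G(x)$; thus $0 \ne W \subseteq C_V(G(x))$. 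If $G(x)$ were equal to $G$ this would force $W \subseteq C_V(G) = 0$, a contradiction. Hence $G(x) \ne G$ for every $x \in X$, which is exactly the statement that $\Delta$ is empty. The same reasoning applies verbatim with $V$ replaced by $V^\tau$ in the two graph-automorphism cases.

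I expect no real obstacle at this stage: all the content of the corollary is carried by Theorem \ref{t:fixV}, whose proof is the lengthy but routine case-by-case verification, using the Jordan block data of \cite{Law1}, that the fixed-space dimensions of the classes in Table \ref{tab:main} sum to more than $(t-1)\dim V$. Within the corollary itself the only point needing a word of care is the graph-automorphism bookkeeping for $(G_2,3)$ and $(F_4,2)$; this is dispatched either by passing to the twisted module as above, or, more cleanly, by remarking that an automorphism of $G$ carries a non-generating tuple to a non-generating tuple, so it suffices to treat a single representative of each graph-automorphism orbit.
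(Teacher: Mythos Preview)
Your proposal is correct and follows essentially the same approach as the paper's own proof: invoke Theorem~\ref{t:fixV} to obtain the inequality~\eqref{e:di}, deduce that $\bigcap_i C_V(x_i) \ne 0$ for every $x \in X$, and conclude that $G(x) \ne G$ since $C_V(G)=0$. The only difference is that you spell out the graph-automorphism bookkeeping for $(G,p)=(G_2,3)$ and $(F_4,2)$ explicitly, whereas the paper leaves this implicit; your second suggestion---that an automorphism of $G$ sends non-generating tuples to non-generating tuples---is indeed the cleanest way to dispose of this, and is how the paper tacitly treats it.
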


\begin{proof}
If the inequality in Theorem \ref{t:fixV} is satisfied, then $\bigcap_i C_V(x_i)$ is nontrivial and thus $\la x_1, \ldots, x_t \ra$, and also $G(x)$, has a nontrivial fixed space on $V$ for all $x = (x_1, \ldots, x_t) \in X$. But $C_V(G) = 0$ and we conclude that $\Delta$ is empty.
\end{proof}

\section{Parabolic actions}\label{s:parab}

Define $G$ and $X = \mathcal{C}_1 \times \cdots \times \mathcal{C}_t$ as in the previous section and let $\mathcal{P} = \{P_1, \ldots, P_r\}$ be a set of representatives of the conjugacy classes of maximal parabolic subgroups of $G$, where $r$ is the rank of $G$. Fix a subgroup $H \in \mathcal{P}$ and let $\O = G/H$ be the corresponding coset variety. Following \cite{BGG1}, we can use a character-theoretic approach to compute the dimensions of the fixed point spaces $C_{\O}(y)$ for each unipotent element $y \in G$.

Let $\s$ be a Steinberg endomorphism of $G$ with finite fixed point subgroup $G_{\s} = G(q)$ for some $p$-power $q$ and let $y \in G$ be unipotent. By inspecting the relevant tables in \cite[Chapter 22]{LS_book}, we observe that $y^G \cap G(q)$ is non-empty and thus every unipotent class in $G$ has a representative in $G(q)$. We may assume $H$ is $\s$-stable, so $H_{\s}$ is a maximal parabolic subgroup of $G_{\s}$ and we can consider the corresponding permutation character $\chi = 1^{G_{\s}}_{H_{\s}}$. According to \cite[Lemma 2.4]{LLS2}, the character $\chi$ admits the following decomposition
\begin{equation}\label{e:dec}
\chi = \sum_{\phi \in \widehat{W}}n_{\phi}R_{\phi}
\end{equation}
where $\widehat{W}$ is the set of complex irreducible characters of the Weyl group  $W$ of $G$. Here the $R_{\phi}$ are almost characters of $G_{\s}$ and the coefficients are given by the inner products $n_{\phi} = \la 1^W_{W_H},\phi \ra$, where $W_H$ is the corresponding parabolic subgroup of $W$. In each case, the precise decomposition of $\chi$ as in \eqref{e:dec} is presented in \cite[Section 2]{LLS2}. 

The restriction of each almost character $R_{\phi}$ to unipotent elements yields the Green functions of $G_{\s}$, as defined by Deligne and Lusztig \cite{DL}. Building on earlier work due to Beynon-Spaltenstein, Lusztig, Malle and Shoji, the computation of the Green functions for exceptional groups of Lie type in all characteristics has very recently been completed by Geck and L\"{u}beck \cite{Geck, Lub}. This allows us to compute $\chi(z)$ for every $p$-element $z \in G_{\s}$ and in each case we obtain a polynomial in $q$. By considering the degrees of these polynomials and by appealing to Lang-Weil \cite{LW}, we can read off $\dim C_{\O}(y)$ for each unipotent element $y \in G$. More precisely, if we write $(y^G)_{\s} = \bigcup_j z_j^{G_{\s}}$ as a union of $G_{\s}$-classes, then $\dim C_{\O}(y)$ is the maximal degree of the polynomials $\chi(z_j)$. 

This approach allows us to compute $\dim C_{\O}(y)$ for every unipotent element $y \in G$. For example, suppose $G = F_4$, $p \geqs 3$, $H = P_1$ and $y$ is contained in the class labelled $B_2$. Then $\dim \O = 15$ and by inspecting \cite[p.414]{LLS2} we observe that 
\[
\chi = R_{\phi_{1,0}} + R_{\phi_{2,1}} + R_{\phi_{2,2}}+R_{\phi_{1,3}'}
\]
in terms of Carter's notation for irreducible characters of $W$ (see \cite{Car}). From \cite[Table 22.2.4]{LS_book}, we see that $(y^G)_{\s} = z_1^{G_{\s}} \cup z_2^{G_{\s}}$, where 
\[
|C_{G_{\s}}(z_1)| = 2q^{10}|{\rm SL}_2(q)|^2, \;\; |C_{G_{\s}}(z_2)| = 2q^{10}|{\rm SL}_2(q^2)|.
\]
By taking the appropriate Green functions, we compute
\[
\chi(z_1) = 2q^4+3q^3+2q^2+q+1,\;\; \chi(z_2) = q^3+q+1
\]
and we conclude that $\dim C_{\O}(y) = 4$.

By proceeding in this way, it is routine to verify the following result. In part (ii), $\tau$ is a graph automorphism of $G$. In each case, the dimension of $\O=G/H$ is recorded in \cite[Table 10]{BGS}.

\begin{thm}\label{t:par}
Let $V$ be the $kG$-module in Table \ref{tab:mod} and suppose $\Sigma_X(H) \geqs t-1$ for some maximal parabolic subgroup $H$ of $G$. 
Then one of the following holds:
\begin{itemize}\addtolength{\itemsep}{0.2\baselineskip}
\item[{\rm (i)}] The inequality in \eqref{e:di} is satisfied.
\item[{\rm (ii)}] $(G,p) = (G_2,3)$ or $(F_4,2)$, and \eqref{e:di} holds for
$\mathcal{C}_1^{\tau} \times \cdots \times \mathcal{C}_t^{\tau}$.
\item[{\rm (iii)}] $G = F_4$ and $(\mathcal{C}_1, \ldots, \mathcal{C}_t)$ is either $(A_1,\tilde{A}_1,(\tilde{A}_1)_2)$ or $(\tilde{A}_1,\tilde{A}_2)$, up to reordering.
\item[{\rm (iv)}] $G = E_6$, $E_7$ or $E_8$ and $(\mathcal{C}_1, \ldots, \mathcal{C}_t)$ is one of the cases recorded in Table \ref{tab:special}, up to reordering.
\end{itemize}
\end{thm}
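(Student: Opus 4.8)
The plan is a finite, if laborious, verification carried out case by case over the five exceptional types. For each $G$ and each maximal parabolic subgroup $H = P_i$, with $\dim \O = \dim G/P_i$ recorded in \cite[Table 10]{BGS}, I will compute $\dim C_{\O}(y)$, and hence the ratio $\a(G,P_i,y)$ in \eqref{e:alpha}, for every unipotent class $y^G$ consisting of elements of order $p$. As explained before the statement, this is done character-theoretically: one decomposes the permutation character $\chi = 1^{G_{\s}}_{H_{\s}}$ as in \eqref{e:dec} using the coefficients $n_{\phi} = \la 1^W_{W_H},\phi\ra$ tabulated in \cite[Section 2]{LLS2}, evaluates $\chi$ on a representative of each $G_{\s}$-class contained in $(y^G)_{\s}$ via the Green functions of $G_{\s}$ --- now available in all characteristics by the work of Geck and L\"{u}beck \cite{Geck,Lub} --- and reads off $\dim C_{\O}(y)$ as the largest degree in $q$ of the resulting polynomials, by Lang--Weil \cite{LW}. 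Throughout one must keep track of the characteristic: both the set of classes of order $p$ and, for small $p$, the Green functions and the decomposition of $(y^G)_{\s}$ into $G_{\s}$-classes (with centraliser orders read off from \cite[Chapter 22]{LS_book}) can differ from the generic situation, and this is precisely the source of the exceptions in (ii)--(iv).

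To keep the search finite it is convenient to note first that $\a(G,P_i,\cdot)$ is order-reversing for the closure ordering on unipotent classes: since $\O = G/P_i$ is projective, the function $g \mapsto \dim C_{\O}(g)$ is upper semicontinuous on $G$, so $z^G \preccurlyeq y^G$ forces $\dim C_{\O}(z) \geqs \dim C_{\O}(y)$. By Spaltenstein's closure diagrams \cite{Spal} (see Section \ref{ss:fix}) the minimal nontrivial unipotent classes are the root element classes --- $A_1$ and $\tilde{A}_1$ for $G \in \{G_2,F_4\}$, and $A_1$ alone for $G = E_6, E_7, E_8$ --- each of which consists of elements of order $p$ for every $p$. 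Writing $a^{*}$ for the maximum of $\a(G,P_i,r)$ over all $P_i$ and all root element classes $r$, any $X = \mathcal{C}_1 \times \cdots \times \mathcal{C}_t$ satisfies $\Sigma_X(P_i) \leqs t\,a^{*} < t$, so the hypothesis $\Sigma_X(H) \geqs t-1$ bounds $t$ by a small explicit constant. Within each admissible value of $t$ I will enumerate, parabolic by parabolic, the multisets $\{\mathcal{C}_1, \ldots, \mathcal{C}_t\}$ of order-$p$ classes whose $\a$-values sum to at least $t-1$; the monotonicity prunes this enumeration sharply, since replacing a class by a larger one in the closure order only decreases $\Sigma_X$.

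It then remains to sort the resulting finite list of tuples $X$ with $\Sigma_X(P_i) \geqs t-1$ for some $P_i$ according to (i)--(iv). For each such $X$ I will compute $\sum_i \dim C_V(y_i)$ from the Jordan form data in \cite{Law1} for the module $V$ of Table \ref{tab:mod}: if this exceeds $(t-1)\dim V$ we are in case (i). When $(G,p) = (G_2,3)$ or $(F_4,2)$ one must also apply this test to $\mathcal{C}_1^{\tau} \times \cdots \times \mathcal{C}_t^{\tau}$, since there $P_i$ and $P_i^{\tau}$ are not conjugate and $\a(G,P_i,\cdot)$ is not $\tau$-invariant; this produces case (ii). The tuples that survive both tests are then inspected individually, and they turn out to be exactly $(A_1,\tilde{A}_1,(\tilde{A}_1)_2)$ and $(\tilde{A}_1,\tilde{A}_2)$ when $G = F_4$, giving (iii), and exactly the tuples of Table \ref{tab:special} when $G = E_6, E_7, E_8$, giving (iv); for $G = G_2$ every surviving tuple already falls under (i) or (ii), consistently with the absence of a $G_2$ entry in (iii)--(iv).

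The hard part will be the sheer bulk and delicacy of the Green function computations, above all in the bad-prime and small-$p$ cases where the generic answers fail and one must work with the explicit data of \cite{Geck,Lub} class by class: in each instance one has to confirm that the polynomial degree (hence $\dim C_{\O}(y)$) stabilises independently of $q$, and to identify correctly the $G_{\s}$-classes comprising $(y^G)_{\s}$. The worked example with $G = F_4$, $H = P_1$ and $y$ in the class $B_2$ given before the statement is representative, and the remaining cases are handled in exactly the same fashion.
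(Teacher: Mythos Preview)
Your proposal is correct and follows essentially the same approach as the paper: the character-theoretic computation of $\dim C_{\O}(y)$ via the decomposition \eqref{e:dec} and the Green functions of \cite{Geck,Lub}, followed by a routine comparison against the Jordan form data in \cite{Law1}. Your explicit use of upper semicontinuity of $g \mapsto \dim C_{\O}(g)$ on the projective variety $\O = G/P_i$ to obtain monotonicity of $\a(G,P_i,\cdot)$ under the closure ordering is a clean organisational device that the paper does not spell out here (it invokes closure arguments only later, via Proposition~\ref{p:clos}), but the underlying verification is the same.
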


Once we have proved Theorem \ref{t:main}, we can obtain the following corollary by applying Theorem \ref{t:par}.

\begin{cor}\label{c:parab}
Let $V$ be the $kG$-module in Table \ref{tab:mod}. Then $\Delta$ is empty if and only if one of the following holds, up to reordering and graph automorphisms:
\begin{itemize}\addtolength{\itemsep}{0.2\baselineskip}
\item[{\rm (i)}] The inequality in \eqref{e:di} is satisfied.
\item[{\rm (ii)}] $\Sigma_X(H) \geqs t-1$ for some maximal parabolic subgroup $H$ of $G$.
\item[{\rm (iii)}] $G=F_4$ and $(\mathcal{C}_1, \ldots, \mathcal{C}_t) = (A_1, \tilde{A}_1, \tilde{A}_1)$, $(A_1, (\tilde{A}_1)_2, (\tilde{A}_1)_2)$, $(\tilde{A}_1,A_2\tilde{A}_1)$ or $(\tilde{A}_1,B_2)$.
\end{itemize}
In particular, $\Delta$ is empty if and only if either \eqref{e:di} holds, or $\Sigma_X(H) \geqs t-1$ for some positive dimensional maximal subgroup $H$ of $G$.
\end{cor}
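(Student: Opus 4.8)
The plan is to obtain the corollary by combining Theorem~\ref{t:main} with Theorem~\ref{t:par}, using Theorem~\ref{t:fixV} to translate between membership of Table~\ref{tab:main} and the inequality~\eqref{e:di}. Throughout, ``up to reordering and graph automorphisms'' is understood as in Theorem~\ref{t:main}; recall in particular that $\Delta$ is empty for $X = \mathcal{C}_1 \times \cdots \times \mathcal{C}_t$ if and only if it is empty for every permutation of the factors, and for the tuple $\mathcal{C}_1^{\tau} \times \cdots \times \mathcal{C}_t^{\tau}$ obtained from a graph automorphism $\tau$, since $\tau$ induces an isomorphism of the entire configuration.

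For the ``if'' direction, first suppose (i) holds. Then $\bigcap_i C_V(x_i) \neq 0$ for every $x \in X$, so $G(x)$ fixes a nonzero vector of $V$; since $C_V(G) = 0$, this gives $G(x) \neq G$ and hence $\Delta$ is empty, by the argument of Corollary~\ref{c:fixV}. Now suppose (ii) holds, so Theorem~\ref{t:par} applies; I would go through its four conclusions. In parts (i) and (ii) of that theorem, the inequality~\eqref{e:di} holds for $X$ or for its graph-twist, which reduces us to the previous case. In parts (iii) and (iv), the tuple $(\mathcal{C}_1,\ldots,\mathcal{C}_t)$ appears in Table~\ref{tab:special}, so $\Delta$ is empty by Theorem~\ref{t:main}. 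Finally, if (iii) of the corollary holds, then $X$ is again an entry of Table~\ref{tab:special} for $G = F_4$, and $\Delta$ is empty by Theorem~\ref{t:main}.

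For the ``only if'' direction, assume $\Delta$ is empty. By Theorem~\ref{t:main}, the tuple $(\mathcal{C}_1,\ldots,\mathcal{C}_t)$ occurs in Table~\ref{tab:main} or Table~\ref{tab:special}. If it occurs in Table~\ref{tab:main}, then Theorem~\ref{t:fixV} shows that~\eqref{e:di} holds up to reordering and graph automorphisms, which is case (i). If it occurs in Table~\ref{tab:special}, I would compare the table entry-by-entry with the lists in parts (iii) and (iv) of Theorem~\ref{t:par}: every $E_6$, $E_7$ and $E_8$ entry of Table~\ref{tab:special}, together with the $F_4$ entries $(A_1,\tilde{A}_1,(\tilde{A}_1)_2)$ and $(\tilde{A}_1,\tilde{A}_2)$, is one of the tuples listed there, and each such tuple satisfies $\Sigma_X(H) \geqs t-1$ for a suitable maximal parabolic subgroup $H$, so case (ii) holds for it. The only entries of Table~\ref{tab:special} not reached in this way are the four $F_4$ tuples displayed in part (iii) of the corollary, which completes the case analysis.

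The step requiring the most care --- though it is bookkeeping rather than a substantive obstacle --- is this last comparison: one must check that the classification underlying Theorem~\ref{t:par} is sharp, namely that each tuple appearing in parts (iii) and (iv) of that theorem really does satisfy $\Sigma_X(H) \geqs t-1$ for some maximal parabolic $H$, so that it is captured by case (ii), while the four remaining $F_4$ entries of Table~\ref{tab:special} do not, being detected only by reductive subgroups in $\mathcal{M}$. Both facts are read off from the permutation-character and Green-function computations that establish Theorem~\ref{t:par}; the only other thing to monitor is that the conventions on reordering and on graph automorphisms for $(G,p) = (G_2,3)$ and $(F_4,2)$ are applied consistently across Theorems~\ref{t:main}, \ref{t:fixV} and~\ref{t:par}.
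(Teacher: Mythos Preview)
Your proposal is correct and follows essentially the same approach as the paper: combine Theorem~\ref{t:main} with Theorem~\ref{t:par}, use Theorem~\ref{t:fixV} to identify Table~\ref{tab:main} with condition~(i), and then inspect the $F_4$ entries of Table~\ref{tab:special} to isolate the four tuples not captured by~(ii). The paper compresses this into a single sentence, whereas you spell out both directions and correctly flag the one point requiring care, namely that the tuples listed in parts~(iii) and~(iv) of Theorem~\ref{t:par} genuinely satisfy $\Sigma_X(H) \geqs t-1$; this is implicit in the character computations underlying that theorem and is exactly what the paper relies on as well.
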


\begin{proof}
In view of Theorems \ref{t:main} and \ref{t:par}, the first statement follows by inspecting the cases appearing in Table \ref{tab:special} with $G=F_4$, excluding the two configurations in part (iii) of Theorem \ref{t:par}. And to complete the proof, it just remains to show that for each case in part (iii), there exists a positive dimensional maximal subgroup $H$ with $\Sigma_X(H) \geqs t-1$. If $(\mathcal{C}_1, \mathcal{C}_2,\mathcal{C}_3) = (A_1, \tilde{A}_1, \tilde{A}_1)$ then the proof of Proposition \ref{p:f4_dim3} yields $\Sigma_X(B_4) = 2$. Similarly, for $(A_1, (\tilde{A}_1)_2, (\tilde{A}_1)_2)$ we get $\Sigma_X(C_4) = 2$, while $\Sigma_X(B_4) = 1$ for $(\tilde{A}_1,A_2\tilde{A}_1)$ and $(\tilde{A}_1,B_2)$.
\end{proof}

\begin{rem}
In part (ii) of Corollary \ref{c:parab}, we may assume $(G,H)$ is one of the following:
\[
(F_4,P_1), \, (E_6, P_1), \, (E_7, P_1), \, (E_8,P_8),
\]
where the maximal parabolic subgroups are labelled in the usual manner. That is, if we are not in cases (i) or (iii), then $G \ne G_2$ and one can check that $\Sigma_X(H) \geqs t-1$ with respect to the specific maximal parabolic subgroup $H$ listed above.
\end{rem}

\section{Proof of Theorem \ref{t:main}}\label{s:main}

We are now ready to prove Theorem \ref{t:main}. Throughout this section, $G$ and $X$ are defined as in the statement of Theorem \ref{t:main} and recall that we adopt the notation for unipotent classes from \cite{LS_book}. We partition the proof into several subsections according to the group $G$.

\subsection{$G = G_2$}\label{ss:g2}

We begin by assuming $G = G_2$. Information on the unipotent conjugacy classes of $G$ is recorded in \cite[Table 22.1.5]{LS_book} and we adopt the notation therein for labelling the classes. By inspecting \cite[Table 1]{Law1}, it is easy to determine the required condition on $p$ to ensure that the elements in a given unipotent class have order $p$. In addition, note that if $p=3$ then a graph automorphism $\tau$ of $G$ interchanges the classes labelled $A_1$ and $\tilde{A}_1$ comprising long and short root elements, respectively, while the remaining classes are stable under $\tau$. 

Let $\mathcal{M}$ be a set of representatives of the conjugacy classes of closed positive dimensional maximal subgroups of $G$ and write $\mathcal{M} = \mathcal{P} \cup \mathcal{R}$, where the subgroups in $\mathcal{P}$ and $\mathcal{R}$ are parabolic and reductive, respectively. The subgroups in $\mathcal{R}$ are listed in Table \ref{tab:max}.

We will need the following result on fixed point spaces. Recall that the expression $\a(G,H,y)$ is defined in \eqref{e:alpha}. In Table \ref{tab:beta_g2}, $\delta_{r,p}$ is the Kronecker delta, so $\delta_{r,p}=1$ if $p=r$, otherwise $\delta_{r,p}=0$.

\begin{prop}\label{p:g2_dim3}
Let $H \in \mathcal{R}$ and let $y \in G$ be an element of order $p$. Then $\a(G,H,y) \leqs \b$, where $\b$ is recorded in Table \ref{tab:beta_g2}.
\end{prop}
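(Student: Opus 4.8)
The plan is to establish the claimed upper bounds $\a(G,H,y)\leqs\b$ for each reductive maximal subgroup $H\in\mathcal{R}$ of $G_2$ and each element $y\in G$ of order $p$, working case by case through the short list $\mathcal{R}=\{A_2.2,\,\tilde{A}_2.2\ (p=3),\,A_1\tilde{A}_1,\,A_1\ (p\geqs7)\}$. The fundamental tool is Proposition \ref{p:dim}: if $y\in H$ then
\[
\dim C_{\O}(y)=\dim\O-\dim y^G+\dim(y^G\cap H),
\]
while $\dim C_{\O}(y)=0$ if no conjugate of $y$ lies in $H$; dividing by $\dim\O=\dim G-\dim H=14-\dim H$ then gives $\a(G,H,y)$. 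So for each $H$ I would first tabulate $\dim H$, then run through the unipotent classes of $G_2$ containing elements of order $p$ (read off from \cite[Table 1]{Law1} and \cite[Table 22.1.5]{LS_book}: the classes $A_1$, $\tilde{A}_1$, $G_2(a_1)$, $G_2$, subject to the appropriate conditions on $p$), compute $\dim y^G=14-\dim C_G(y)$ from the centraliser data in \cite[Table 22.1.5]{LS_book}, and determine $\dim(y^G\cap H)$.

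The term $\dim(y^G\cap H)$ is where the real work lies, and there are two sub-cases. When $H$ is connected (e.g.\ $A_1\tilde{A}_1$, or the case $H=A_1$ with $p\geqs7$), $y^G\cap H$ is a finite union of $H$-classes by \cite{Gur}, so $\dim(y^G\cap H)=\max_i\dim y_i^H$ over those $H$-classes $y_i^H$ of order-$p$ elements that fuse into $y^G$; here I would invoke Lawther's fusion results \cite{Law2} on the $G$-classes of unipotent $H$-classes (for $H=A_1\tilde{A}_1$ and $H=A_1$ inside $G_2$) to identify exactly which $H$-classes land in which $G_2$-class, and then $\dim y_i^H$ is just $\dim H-\dim C_H(y_i)$, computed inside the small groups $A_1$, $A_1\tilde{A}_1$. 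When $H$ is disconnected, namely $H=A_2.2$ or $H=\tilde{A}_2.2$ (with $p=3$), I must separately handle elements of order $p$ in $H^0$ (again via \cite{Law2}, noting $A_2$ and $\tilde{A}_2$ are $\mathrm{SL}_3$-type subgroups whose unipotent classes — regular and subregular — fuse into the $G_2$-classes in a known way) and, when $p=2$ or $p=3$ divides $|H:H^0|=2$, elements of order $p$ in the outer coset $H^0\tau$; for the latter I would use Proposition \ref{p:graph} (for $p=2$ this gives the relevant graph-automorphism classes of $A_2$; for $p=3$ the outer coset of $A_2.2$ contains no elements of order $3$, so only $H^0$ contributes there, whereas the case $(G,p)=(G_2,3)$ with $H=\tilde{A}_2.2$ is symmetric under the graph automorphism $\tau$ of $G_2$ that swaps $A_1\leftrightarrow\tilde{A}_1$). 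The Kronecker-delta terms $\delta_{r,p}$ appearing in Table \ref{tab:beta_g2} presumably encode exactly these characteristic-dependent jumps — e.g.\ a class that exists or changes dimension only when $p$ equals a specific small value — so I would be careful to track, for each $H$ and each $y$, whether the bound is uniform in $p$ or improves/worsens at the exceptional primes $p=2,3$.

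After assembling, for each pair $(H,y)$, the exact value (or an upper bound) of $\a(G,H,y)=\dfrac{\dim\O-\dim y^G+\dim(y^G\cap H)}{\dim\O}$, I would verify that it is at most the entry $\b$ recorded in Table \ref{tab:beta_g2}, taking the maximum over all order-$p$ classes $y$ for the fixed $H$ (and over the two $G$-classes of $A_1$ subgroups when $p\geqs7$, if those are genuinely distinct here). Since $G_2$ has only a handful of unipotent classes and only four reductive maximal subgroups, this is a finite and fairly short verification; the main obstacle — and the only step requiring genuine care rather than bookkeeping — is getting the fusion of unipotent $H^0$-classes and of order-$p$ elements in outer cosets into $G_2$-classes exactly right in every characteristic, since a single misidentification would throw off both $\dim y^G$ (via the wrong class) and $\dim(y^G\cap H)$. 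I would cross-check these fusions against \cite[Table 1]{Law1} by comparing Jordan forms on the $7$-dimensional module $W_G(\l_1)^*$ of Table \ref{tab:mod}: restricting this module to $H^0$ and computing the Jordan type of a representative in $H^0$ (or in the outer coset) pins down the $G_2$-class unambiguously, and then the claimed bounds follow by direct arithmetic.
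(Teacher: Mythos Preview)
Your approach is essentially identical to the paper's: both proceed case by case through $\mathcal{R}$, apply Proposition \ref{p:dim} together with Lawther's fusion tables \cite{Law2} for the connected component $H^0$, handle order-$p$ elements in outer cosets via Proposition \ref{p:graph} and Jordan-form computations on $W_G(\l_1)$, and exploit the graph automorphism of $G_2$ (at $p=3$) to transfer the $A_2.2$ analysis to $\tilde{A}_2.2$. Two small slips worth flagging: your list of relevant unipotent classes omits $(\tilde{A}_1)_3$, which exists only for $p=3$ and is one of the columns in Table \ref{tab:beta_g2}; and your parenthetical about ``two $G$-classes of $A_1$ subgroups when $p\geqs 7$'' is spurious---there is a unique conjugacy class of maximal $A_1$ in $G_2$ for $p\geqs 7$.
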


{\small
\begin{table}
\[
\begin{array}{l|ccccc}
& A_1 & \tilde{A}_1 & (\tilde{A}_1)_3 & G_2(a_1) & G_2 \\ \hline
A_2.2 & 2/3 & \delta_{2,p}/2 & 0 & 1/3  & 0 \\
\tilde{A}_2.2  & 0 & 2/3 & 0 & 1/3 & 0 \\
A_1\tilde{A}_1 & 1/2 & (1+\delta_{2,p}+\delta_{3,p})/4 & 0 & 1/4 & 0 \\
A_1 & 0 & 0 & 0 & 0 & 1/11 \\
\end{array}
\]
\caption{The upper bound $\a(G,H,y) \leqs \b$ in Proposition \ref{p:g2_dim3}}
\label{tab:beta_g2}
\end{table}}

\begin{proof}
We consider each possibility for $H$ in turn, working with Proposition \ref{p:dim} to obtain the required upper bound on $\dim C_{\O}(y)$.

First assume $H = A_2.2$, so $\dim \O = 6$ and $H^0$ is generated by long root subgroups of $G$. Let $y \in G$ be an element of order $p$. Now the $G$-class of each unipotent class in $H^0$ is determined by Lawther in \cite[Section 4.2]{Law2} and as a consequence we compute $\dim(y^G \cap H^0) = 4,6$ if $y \in A_1, G_2(a_1)$, respectively, otherwise $\dim(y^G \cap H^0)=0$. If $p \geqs 3$, or if $p = 2$ and $y^G \cap (H \setminus H^0)$ is empty, then $y^G \cap H = y^G \cap H^0$  and we can calculate $\dim C_{\O}(y)$ via Proposition \ref{p:dim}. 

So to complete the analysis of this case, we may assume $p=2$ and $y \in H \setminus H^0$ is an involution. Here $y$ acts as a graph automorphism on $H^0$ (see \cite[p.3]{LS04}) and thus $C_{H^0}(y) = B_1$ by Proposition \ref{p:graph}. In order to determine the $G$-class of such an element $y$, let us consider the decomposition 
\[
V\downarrow H^0 = U \oplus U^* \oplus 0,
\]
where $V = W_G(\l_1)$ is the $7$-dimensional Weyl module for $G$ with highest weight $\l_1$ and $U$ and $0$ are the natural and trivial modules for $H^0$, respectively. Since $y$ interchanges the two $3$-dimensional summands, we deduce that $y$ has Jordan form $(J_2^3,J_1)$ on $V$ and by  inspecting \cite[Table 1]{Law1} we see that $y \in \tilde{A}_1$. Therefore, $\a(G,H,y) \leqs \delta_{2,p}/2$ when $y$ is in the class $\tilde{A}_1$.

Next assume $H = \tilde{A}_2.2$ and $p=3$, where $H^0$ is generated by short root subgroups. Here $H$ is the image of a maximal subgroup $A_2.2$ under a graph automorphism $\tau$ (where the connected component $A_2$ is generated by long root subgroups) and so the result follows immediately from our analysis of the previous case, recalling that $\tau$ interchanges the $G$-classes labelled $A_1$ and $\tilde{A}_1$. 
Finally, if $H = A_1\tilde{A}_1$ or $A_1$ then the $G$-class of each unipotent $H$-class is determined in \cite{Law2} and the desired result quickly follows.
\end{proof}

We are now in a position to prove Theorem \ref{t:main} for $G = G_2$. Recall that $\Delta$ is non-empty when $t \geqs 4$ by \cite[Theorem 7]{BGG1}, so we may assume $t \in \{2,3\}$.

\begin{thm}\label{t:g2}
The conclusion to Theorem \ref{t:main} holds when $G = G_2$. 
\end{thm}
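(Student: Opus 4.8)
The plan is to prove Theorem~\ref{t:g2} by a case analysis on $t \in \{2,3\}$, combining the fixed-space criterion of Section~\ref{s:fixV}, the parabolic computations of Section~\ref{s:parab}, the bounds of Proposition~\ref{p:g2_dim3}, and the fibre argument of Proposition~\ref{p:fibre}. First I would recall that the nontrivial unipotent classes of $G=G_2$ are $A_1$, $\tilde A_1$, $(\tilde A_1)_3$ (when $p=3$), $G_2(a_1)$ and $G_2$, with the order-$p$ condition read off from \cite[Table~1]{Law1}: all of these contain elements of order $p$ except $(\tilde A_1)_3$ and $G_2$ in small characteristic, so the list of admissible $\mathcal{C}_i$ is short. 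By Proposition~\ref{p:clos} it suffices to decide emptiness of $\Delta$ on tuples of classes, and by Theorem~\ref{t:plus2} we know $\Delta^+$ is empty precisely when $(G,p)=(G_2,3)$, $t=2$ and $\{\mathcal{C}_1,\mathcal{C}_2\}=\{A_1,\tilde A_1\}$ — that case is immediately in Table~\ref{tab:main} (indeed $\Delta$ is empty there). So henceforth I assume $\Delta^+$ is non-empty and invoke Proposition~\ref{p:fix}: $\Delta$ is non-empty as soon as $\Sigma_X(H) < t-1$ for every $H \in \mathcal{M} = \mathcal{P}\cup\mathcal{R}$.

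The second step handles the reductive maximal subgroups: by Proposition~\ref{p:g2_dim3}, for each $H \in \mathcal{R}$ and each admissible $y_i$ we have $\a(G,H,y_i) \leqs \b$ with $\b$ from Table~\ref{tab:beta_g2}. Summing, I would check that $\Sigma_X(H) = \sum_i \a(G,H,y_i) < t-1$ for all $H \in \mathcal{R}$ unless the tuple is very special. For $t=2$ (with $p \geqs 3$) this requires $\Sigma_X(H) \geqs 1$; inspecting Table~\ref{tab:beta_g2}, the only ways to reach the threshold are tuples built from the class $A_1$ (via $A_2.2$, where $\a = 2/3$) — so $(A_1,A_1)$ is forced, giving $\Sigma_X(A_2.2) = 4/3 \geqs 1$ — or combinations involving $\tilde A_1$ at $p=3$ (via $\tilde A_2.2$). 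For $t=3$ the threshold is $2$, and again only $(A_1,A_1,A_1)$ (and its graph-twist $(\tilde A_1,\tilde A_1,\tilde A_1)$ at $p=3$) can reach it through $A_2.2$ (or $\tilde A_2.2$). For the parabolic subgroups $H \in \mathcal{P}$ (the two maximal parabolics of $G_2$), I would apply Theorem~\ref{t:par} — or carry out the analogous Green-function / permutation-character computation sketched in Section~\ref{s:parab} — to see that $\Sigma_X(H) < t-1$ holds except in precisely the cases already appearing in Table~\ref{tab:main}, i.e.\ those where the fixed-space inequality \eqref{e:di} on the $7$-dimensional module $V = W_G(\l_1)^*$ is satisfied; Theorem~\ref{t:fixV} identifies these as $(A_1,A_1,A_1)$, $(A_1,A_1)$ is \emph{not} in Table~\ref{tab:main} — wait, I should double-check against Table~\ref{tab:clos}: the relevant $G_2$-tuples are $(A_1,A_1,A_1)$ and $(A_1,G_2(a_1))$, and every other admissible tuple has $\Sigma_X(H) < t-1$ for all $H$, hence $\Delta \ne \emptyset$ by Proposition~\ref{p:fix}.

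The third step treats the finitely many remaining tuples — those not excluded by Proposition~\ref{p:fix} — and shows $\Delta$ is empty for exactly the ones listed in Table~\ref{tab:main} for $G_2$. For $(A_1,A_1,A_1)$ and $(A_1,G_2(a_1))$ (and for $t=2$, $p\geqs 3$, the tuples with $\tilde A_1$'s in characteristic $3$ that the graph automorphism permutes into these), I would first note that $(A_1,A_1,A_1)$ and $(A_1,G_2(a_1))$ satisfy \eqref{e:di} on $V$ — one checks from \cite[Table~1]{Law1} that a long root element has Jordan form $(J_2^2,J_1^3)$ on $V$ (fixed space dimension $5$), so three of them give $5\cdot 3 = 15 > 2\cdot 7$, and $G_2(a_1)$ has Jordan form $(J_3^2,J_1)$ (fixed space $3$) so $(A_1,G_2(a_1))$ gives $5+3 = 8 > 7$ — hence $\Delta$ is empty by Corollary~\ref{c:fixV}. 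Conversely, for any admissible tuple \emph{not} in Table~\ref{tab:main}, the second step shows $\Sigma_X(H) < t-1$ for all $H \in \mathcal{M}$, so $\Delta$ is non-empty by Proposition~\ref{p:fix}. This completes the equivalence. The main obstacle I anticipate is the bookkeeping in the parabolic case: one must be careful that the character-theoretic computation of $\dim C_\O(y)$ via the permutation characters $1^{G_\s}_{H_\s}$ and the Geck--L\"ubeck Green functions is carried out correctly for both maximal parabolics of $G_2$ and for the small-characteristic cases $p=2,3$, and that the graph-automorphism symmetry when $p=3$ is consistently tracked so that tuples are counted up to $\tau$ exactly as stated in Table~\ref{tab:main}. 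Everything else reduces to finite, explicit checks against Tables~\ref{tab:beta_g2} and \ref{tab:main} and the Jordan-form data in \cite{Law1}.
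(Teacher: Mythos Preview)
Your approach is essentially the same as the paper's: use Corollary~\ref{c:fixV} to show $\Delta$ is empty for the tuples in Table~\ref{tab:main}, and for the remaining tuples invoke Proposition~\ref{p:fix}, bounding $\Sigma_X(H)$ via Theorem~\ref{t:par} for $H \in \mathcal{P}$ and via Proposition~\ref{p:g2_dim3} for $H \in \mathcal{R}$. No additional ideas are required.

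A few corrections and simplifications. First, Proposition~\ref{p:fibre} is not used for $G = G_2$: there are no $G_2$ entries in Table~\ref{tab:special}, so the fibre argument never arises and you should drop it from your plan. Second, your claim that ``$(A_1,A_1)$ is not in Table~\ref{tab:main}'' is false; it is listed there for $t=2$, and all four $t=2$ tuples $(A_1,A_1)$, $(A_1,\tilde A_1)$, $(A_1,(\tilde A_1)_3)$, $(A_1,G_2(a_1))$ appear. Third, the paper organises the non-emptiness verification more cleanly than you do: rather than attempting to sum the $\b$-values for every admissible tuple, it uses the closure relation (Proposition~\ref{p:clos}) to reduce to a single boundary check in each case. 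For $t=3$ it suffices to verify $\Sigma_X(H)<2$ for $(\mathcal{C}_1,\mathcal{C}_2,\mathcal{C}_3) = (A_1,A_1,\tilde A_1)$; for $t=2$ (with $p \geqs 3$) it suffices to check $(A_1,G_2)$, $(\tilde A_1,(\tilde A_1)_3)$, and $(\tilde A_1,\tilde A_1)$ with $p \geqs 5$. Identifying these minimal ``just outside'' cases explicitly would make your argument both shorter and more convincing than a threshold-hunting scan of Table~\ref{tab:beta_g2}.
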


\begin{proof}
First assume $t=3$. By Corollary \ref{c:fixV}, we know that $\Delta$ is empty when $\mathcal{C}_i = A_1$ for all $i$. By considering Proposition \ref{p:clos} and the closure relation on the set of unipotent classes in $G$ (see \cite[Section II.10]{Spal}), it suffices to show that $\Delta$ is non-empty when $X = \mathcal{C}_1 \times \mathcal{C}_2 \times \mathcal{C}_3$ with $\mathcal{C}_1 = \mathcal{C}_2 = A_1$ and $\mathcal{C}_3 = \tilde{A}_1$. By Proposition \ref{p:fix}, we just need to verify the bound $\Sigma_X(H)<2$ for all $H \in \mathcal{M}$, where $\Sigma_X(H)$ is defined as in \eqref{e:sig}. By Theorem \ref{t:par}, this bound holds when $H \in \mathcal{P}$ is a maximal parabolic subgroup. And for $H \in \mathcal{R}$, the desired result follows from the upper bounds on $\a(G,H,y)$ in Proposition \ref{p:g2_dim3}.

A very similar argument applies when $t=2$ and $p \geqs 3$. Here it suffices to show that $\Sigma_X(H)<1$ for all $H \in \mathcal{R}$ when $(\mathcal{C}_1,\mathcal{C}_2) = (A_1,G_2)$, $(\tilde{A}_1,(\tilde{A}_1)_3)$ or $(\tilde{A}_1,\tilde{A}_1)$, with $p \geqs 5$ in the latter case (if $p=3$ then $(\tilde{A}_1,\tilde{A}_1)$ is the image of $(A_1,A_1)$ under a graph automorphism). Once again, the result follows by applying the bounds presented in Proposition \ref{p:g2_dim3}.
\end{proof}

\subsection{$G = F_4$}\label{ss:f4}

Next assume $G = F_4$. Here we refer the reader to \cite[Table 22.1.4]{LS_book} for information on the unipotent classes in $G$, including the notation we use to label the classes. Note that if $p=2$ then a graph automorphism interchanges the $G$-classes labelled $A_1$ and $\tilde{A}_1$ comprising long and short root elements, respectively, and it fixes the classes $(\tilde{A}_1)_2$ and $A_1\tilde{A}_1$. As before, we write $\mathcal{M} = \mathcal{P} \cup \mathcal{R}$ for a set of representatives of the conjugacy classes of closed positive dimensional maximal subgroups of $G$ (see Table \ref{tab:max} for the subgroups in $\mathcal{R}$). We also write $\mathcal{L}$ for the subset of $\mathcal{R}$ defined in Table \ref{tab:long} (see Theorem \ref{t:long}). 

We begin by establishing the following result on fixed point spaces.

\begin{prop}\label{p:f4_dim3}
Let $H \in \mathcal{L}$ and let $y \in G$ be an element of order $p$ in one of the following conjugacy classes
\[
A_1, \, \tilde{A}_1, \, (\tilde{A}_1)_2, \, A_1\tilde{A}_1, \, A_2, \, \tilde{A}_2, \, A_2\tilde{A}_1, \, \tilde{A}_2A_1, \, C_3. 
\]
Then $\a(G,H,y) \leqs \b$, where $\b$ is recorded in Table \ref{tab:beta_f4}.
\end{prop}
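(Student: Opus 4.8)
The plan is to follow the same strategy used in Proposition~\ref{p:g2_dim3}: work through each subgroup $H \in \mathcal{L}$ in turn and each unipotent class $y^G$ in the given list, and in every case apply Proposition~\ref{p:dim}, so that
\[
\dim C_{\O}(y) = \dim \O - \dim y^G + \dim(y^G \cap H),
\]
where $\dim \O = \dim G - \dim H = 52 - \dim H$ and $\dim y^G = 52 - \dim C_G(y)$ can be read off from \cite[Table 22.1.4]{LS_book}. The only genuinely case-dependent quantity is $\dim(y^G \cap H)$, and for the connected component $H^0$ this is exactly what Lawther's tables in \cite{Law2} provide: for each maximal connected reductive $H^0$ (namely $B_4$, $C_4$ when $p=2$, $A_1C_3$ and $A_1G_2$ when $p\geqs 3$, $D_4$, $\tilde D_4$ when $p=2$, and $A_2\tilde A_2$), the $G$-class of every unipotent $H^0$-class is recorded, so $\dim(y^G\cap H^0)$ is the maximum of $\dim z^{H^0}$ over the finitely many $H^0$-classes $z^{H^0}$ fusing into $y^G$ (using that $y^G\cap H^0$ is a finite union of $H^0$-classes by \cite{Gur}). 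Plugging these into the displayed formula and dividing by $\dim\O$ gives the entries of Table~\ref{tab:beta_f4} for the classes that meet $H^0$; for classes $y^G$ with $y^G\cap H^0=\emptyset$ (and no contribution from the outer part, see below) one records $\b=0$.

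The step requiring real care is the disconnected subgroups, i.e.\ those $H\in\mathcal{L}$ with $|H:H^0|$ divisible by $p$, which here means exactly $p=2$ and $H$ one of $D_4.S_3$, $\tilde D_4.S_3$, $A_2\tilde A_2.2$ (and, when $p=3$, $D_4.S_3$ again via the triality outer elements). For these I must account for elements $y\in H\setminus H^0$ of order $p$: I would determine their $H^0$-class of centralizers using Proposition~\ref{p:graph} (graph automorphisms of order $2$ or $3$ of a $D_4$ or $A_2$-type component), then pin down the $G$-class of $y$ by restricting a convenient $kG$-module --- the $26$-dimensional module $V=W_G(\l_4)$ (or $W_G(\l_4)^*$) from Table~\ref{tab:mod}, or the adjoint module --- to $H^0$, reading off how the outer element permutes the summands, computing its Jordan form on $V$, and matching against \cite[Table 5]{Law1} or similar. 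For $D_4.S_3$ with $p=2$, an involution in a coset of type $S_3$ acting as a graph automorphism on $D_4$ is expected to land in $\tilde A_1$ or $(\tilde A_1)_2$ (by analogy with the $G_2$ calculation and with Remark~\ref{r:spal}); for $A_2\tilde A_2.2$ with $p=2$ the outer involution swaps the two $A_2$ factors, forcing a Jordan form with blocks of size $\leqs 2$, hence $y$ in $A_1$, $\tilde A_1$, or $A_1\tilde A_1$. I then recompute $\dim(y^G\cap H)=\max\{\dim(y^G\cap H^0),\ \dim(y^H)\text{ for outer }y\}$ and compare with the connected-case bound, keeping whichever is larger; this is where the Kronecker-delta terms $\delta_{2,p}$ in Table~\ref{tab:beta_f4} come from.

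Finally I would assemble all these numbers into Table~\ref{tab:beta_f4}, checking that the claimed $\b$ is an upper bound for $\a(G,H,y)=\dim C_{\O}(y)/\dim\O$ over all elements $y$ of order $p$ in the stated class (so in characteristics where the class is empty of order-$p$ elements the bound is vacuous, and where there are several order-$p$ unipotent representatives one takes the worst case). A useful sanity check at each step: whenever $H\in\mathcal{L}$ does contain long root elements, the $A_1$ entry must be consistent with $\dim(A_1^G\cap H)$ being the dimension of the long-root-element class of $H^0$, and whenever $H\notin\mathcal{L}$ one would get $\b=0$ in the $A_1$ column --- but since every $H$ here lies in $\mathcal{L}$ by construction, the $A_1$ column is generically nonzero. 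The main obstacle is thus the bookkeeping for the outer unipotent elements in the $p=2$ (and $p=3$) disconnected cases; once those $G$-class identifications are made, the rest is the routine arithmetic of Proposition~\ref{p:dim} together with the data in \cite{LS_book,Law1,Law2}.
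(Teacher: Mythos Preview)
Your overall strategy matches the paper's exactly: Proposition~\ref{p:dim} plus Lawther's fusion tables for the connected components, with the disconnected cases handled by identifying the $G$-class of outer elements via Jordan forms on a suitable module. However, two of your specific identifications for the outer elements are wrong, and these are precisely the delicate points of the argument.

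For $H=A_2\tilde{A}_2.2$ with $p=2$, the outer involution does \emph{not} swap the two $A_2$ factors: one factor is generated by long root subgroups and the other by short root subgroups, so no element of $G$ can interchange them. Instead, the outer involution acts as a graph automorphism on \emph{each} factor separately, giving $C_{H^0}(y)=B_1^2$; computing the Jordan form on $V=W_G(\lambda_4)$ (via the decomposition $(U\otimes U)\oplus(U^*\otimes U^*)\oplus(0\otimes\mathcal{L}(A_2))$) shows $y$ has Jordan form $(J_2^{12},J_1^2)$ and lies in the $G$-class $A_1\tilde{A}_1$, not in $A_1$, $\tilde{A}_1$, or $(\tilde{A}_1)_2$ as you suggest. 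For $H=D_4.S_3$ with $p=2$, the two classes of outer involutions (types $b_1$ and $b_3$ as graph automorphisms of $D_4$) embed via $D_4.2<B_4$ and land in the $G$-classes $\tilde{A}_1$ and $A_1\tilde{A}_1$ respectively, not $(\tilde{A}_1)_2$; one reads this off directly from the $B_4$ fusion table rather than by a module computation. Finally, your explanation of the $\delta_{2,p}$ terms is slightly off: the one in the $B_4$ row arises from the different unipotent class structure of $B_4$ itself in characteristic $2$ (there are no outer elements, $B_4$ being connected), not from outer contributions.
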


{\small
\begin{table}
\[
\begin{array}{l|ccccccccc}
& A_1 & \tilde{A}_1 & (\tilde{A}_1)_2 & A_1\tilde{A}_1 & A_2 & \tilde{A}_2 & A_2\tilde{A}_1 & \tilde{A}_2A_1 & C_3 \\ \hline
B_4 & 3/4 & (5-\delta_{2,p})/8 & 5/8 & 1/2 & 1/2 & 0 & 3/8 & 0 & 0 \\
C_4 & 1/2 & 3/4 & 5/8 & 1/2 &  &  &  &  &  \\
A_1C_3 & 9/14 & 4/7 & & 3/7 & 3/7 & 3/7 & 0 & 2/7 & 1/7 \\
A_1G_2 & 5/7 & 0 & & 3/7 & 3/7 & 13/35 & 0 & 11/35 &  0 \\
A_2\tilde{A}_2.2 & 2/3 & (3+\delta_{2,p})/6 & 0 & 1/2 & 1/3 & 1/3 & 1/3 & 1/3 &  0 \\ 
D_4.S_3 & 3/4 & 5/8 & 7/12 & 1/2 & 1/2 & 1/3 & 0 & 1/3 &  0 \\
\tilde{D}_4.S_3 & 5/8 & 3/4 & 7/12 & 1/2 & & & & &  \\
\end{array}
\]
\caption{The upper bound $\a(G,H,y) \leqs \b$ in Proposition \ref{p:f4_dim3}}
\label{tab:beta_f4}
\end{table}}

\begin{proof}
If $H \in \{B_4,C_4,A_1C_3,A_1G_2\}$ then Lawther \cite{Law2} has determined the $G$-class of each $H$-class of unipotent elements and by appealing to Proposition \ref{p:dim} it is a straightforward exercise to compute $\a(G,H,y)$ in each case. For example, if $H = B_4$ and $p=2$, then the $G$-class of each $H$-class of involutions in $H$ is recorded in Table \ref{tab:b4} (in the first column, we use the notation from \cite[Table 4]{Law2} for the $H$-class of $y$, with the corresponding label from \cite{AS} given in the second column). As a consequence, we deduce that if $y \in A_1\tilde{A}_1$ then 
\[
\dim C_{\O}(y) = \dim \O - \dim y^G + \dim(y^G \cap H) = 16-28+20 = 8
\]
and thus $\a(G,H,y) = 1/2$.

{\small
\begin{table}
\[
\begin{array}{ccccc} \hline
\mbox{$H$-class of $y$} & & \dim y^H & \mbox{$G$-class of $y$} & \dim y^G \\ \hline
A_1 & a_2 & 12 & A_1 & 16 \\
B_1 & b_1 & 8 & \tilde{A}_1 & 16 \\
B_1^{(2)} & c_2 & 14 & (\tilde{A}_1)_2 & 22 \\
2A_1 & a_4 & 16 & (\tilde{A}_1)_2  & 22 \\
A_1+B_1 & b_3 & 18 & A_1\tilde{A}_1  & 28 \\
A_1+B_1^{(2)} & c_4 & 20 & A_1\tilde{A}_1 & 28 \\ \hline
\end{array}
\] 
\caption{The case $G = F_4$, $H = B_4$, $p=2$}
\label{tab:b4}
\end{table}}

Next assume $H = A_2\tilde{A}_2.2$. The $G$-class of each $H^0$-class of unipotent elements is determined in \cite[Section 4.7]{Law2} and this allows us to  compute $\dim(y^G \cap H^0)$. This gives $\dim(y^G \cap H)$, and hence $\dim C_{\O}(y)$ via Proposition \ref{p:dim}, unless $p=2$ and $y^G \cap (H \setminus H^0)$ is non-empty. So we may assume $p=2$ and $y \in H \setminus H^0$ is an involution. Here $y$ induces a graph automorphism on both $A_2$ factors of $H^0$ and thus $C_{H^0}(y) = B_1^2$ (see Proposition \ref{p:graph}). In order to identify the $G$-class of $y$, let $V = W_G(\l_4)$ be the $26$-dimensional Weyl module with highest weight $\l_4$ and note that 
\[
V\downarrow H^0 = (U \otimes U) \oplus (U^* \otimes U^*) \oplus (0 \otimes \mathcal{L}(A_2))
\]
(see \cite[Table 2]{Thomas}), where $U$, $\mathcal{L}(A_2)$ and $0$ are the natural, adjoint and trivial modules for $A_2$, respectively. Now $y$ interchanges the two $9$-dimensional summands and we calculate that it has Jordan form $(J_2^3,J_1^2)$ on $\mathcal{L}(A_2)$ (to do this, one just needs to consider the action of the transpose map on the space of trace-zero $3 \times 3$ matrices over $k$). Therefore, $y$ has Jordan form $(J_2^{12},J_1^2)$ on $V$ and by inspecting \cite[Table 3]{Law1} we deduce that $y$ is in the $G$-class labelled $A_1\tilde{A}_1$. Since $\dim(y^G \cap H^0) = 8$, we deduce that $\dim(y^G \cap H) = 10$ and thus $\a(G,H,y) = 1/2$.

Next suppose $H = D_4.S_3$. Here $H^0 < B_4 < G$ and so we can use \cite[Section 4.4]{Law2} to compute $\dim(y^G \cap H^0)$. For example, suppose $p=2$ and observe that there are three $H$-classes of involutions in $H^0$, represented by the elements $a_2$, $c_2$ and $c_4$ in the notation of \cite{AS} (note that involutions of type $c_2$, $a_4$ and $a_4'$ are conjugate under a triality graph automorphism of $H^0$). The corresponding class in $B_4$ has the same label and the $G$-class can be read off from Table \ref{tab:b4}. 

So to complete the analysis of this case, we may assume $p \in \{2,3\}$ and $y \in H \setminus H^0$ has order $p$. First assume $p=2$. Here we can proceed as above, noting that $D_4.2 < B_4$ and the relevant involutions are of type $b_1$ and $b_3$, which we view as graph automorphisms of $H^0$. By consulting Table \ref{tab:b4}, we see that the $b_1$-involutions are contained in the $G$-class $\tilde{A}_1$, while those of type $b_3$ are in the class labelled $A_1\tilde{A}_1$. So for $y \in \tilde{A}_1$ we deduce that $\dim(y^G \cap H) \leqs 7$ and thus $\a(G,H,y) \leqs 5/8$. Similarly, if $y \in A_1\tilde{A}_1$ then $\dim(y^G \cap H) = 16$ and $\a(G,H,y) = 1/2$. 

Now assume $p=3$ and $y \in H \setminus H^0$ has order $3$. Here $y$ acts as a triality graph automorphism on $H^0$ and there are two $H^0$-classes to consider, represented by $y_1$ and $y_2$, where $C_{H^0}(y_1) = G_2$ and $C_{H^0}(y_2) = C_{G_2}(u)$ with $u \in G_2$ a long root element (see  Proposition \ref{p:graph}). As above, let $V = W_G(\l_4)$ and note that  
\[
V\downarrow H^0 = U_1 \oplus U_2 \oplus U_3 \oplus 0^2,
\]
where the $U_i$ denote the three $8$-dimensional irreducible modules for $H^0$ (namely, the natural module and the two spin modules) and $0$ is the trivial module (see \cite[Table 2]{Thomas}). Now $y$ cyclically permutes $U_1,U_2$ and $U_3$, whence  the Jordan form of $y$ on $V$ has $8$ Jordan blocks of size $3$. By inspecting \cite[Table 3]{Law1}, this places $y$ in the $G$-class labelled $\tilde{A}_2$ or $\tilde{A}_2A_1$. In fact, by arguing as in the proof of \cite[Proposition 5.14]{BGS} we can show that $y \in \tilde{A}_2A_1$ when $C_{H^0}(y) = C_{G_2}(u)$. Therefore, we conclude that $\dim(y^G \cap H) \leqs 14$ and $\a(G,H,y) \leqs 1/3$ if $y \in \tilde{A}_2$. Similarly, we get $\a(G,H,y) \leqs 1/3$ if $y \in \tilde{A}_2A_1$.

Finally, let us observe that the result for $H =  \tilde{D}_4.S_3$ with $p=2$ follows immediately from our analysis of the previous case, noting that $H$ is the image of a maximal subgroup $D_4.S_3$ of $G$ under a graph automorphism, where the connected component of the latter group is generated by long root subgroups. 
\end{proof}

If $t \geqs 5$ then $\Delta$ is non-empty by \cite[Theorem 7]{BGG1}, so we may assume $t \in \{2,3,4\}$. First we handle the case $t=4$.

\begin{thm}\label{t:f4_4}
The conclusion to Theorem \ref{t:main} holds when $G = F_4$ and $t=4$.
\end{thm}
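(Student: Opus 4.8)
The plan is to assemble the results of Sections \ref{s:fixV}--\ref{s:parab} with the fixed-point criterion of Proposition \ref{p:fix}, so that beyond Proposition \ref{p:f4_dim3} almost no new computation is required. To start, every variety $X = \mathcal{C}_1 \times \cdots \times \mathcal{C}_4$ recorded in Table \ref{tab:main} has $\Delta$ empty by Corollary \ref{c:fixV}, and no configuration with $G = F_4$ and $t = 4$ appears in Table \ref{tab:special} (compare Theorem \ref{t:par}(iii) and Corollary \ref{c:parab}(iii): every special $F_4$-tuple has $t \in \{2,3\}$). So the theorem reduces to showing $\Delta$ is non-empty whenever $X$ is not in Table \ref{tab:main}, equivalently whenever \eqref{e:di} fails for $V = W_G(\l_4)^*$ and, when $p = 2$, also fails for the graph-twist $\mathcal{C}_1^\tau \times \cdots \times \mathcal{C}_4^\tau$. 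Since $t = 4 > 2$, Theorem \ref{t:plus2} gives that $\Delta^+$ is non-empty, so by Proposition \ref{p:fix} it suffices to prove $\Sigma_X(H) < t - 1 = 3$ for every $H \in \mathcal{M} = \mathcal{P} \cup \mathcal{R}$.

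For a maximal parabolic subgroup $H \in \mathcal{P}$ I would invoke Theorem \ref{t:par}: if $\Sigma_X(H) \geqs 3$, then one of its conclusions (i)--(iv) holds, but (iii) needs $t \in \{2,3\}$ and (iv) needs $G \in \{E_6,E_7,E_8\}$, so (i) or (ii) holds; in either case \eqref{e:di} is satisfied by $X$ or by its graph-twist, whence $X$ lies in Table \ref{tab:main} (which is listed up to graph automorphisms), against our assumption. Hence $\Sigma_X(H) < 3$ for all parabolic $H$.

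The substantive case is $H \in \mathcal{R}$, where Proposition \ref{p:f4_dim3} carries most of the load. For $H \in \mathcal{L}$ and $y$ in one of the nine small $G$-classes appearing there, Table \ref{tab:beta_f4} gives $\a(G,H,y) \leqs \b$; inspecting it, $\a(G,H,y) = 3/4$ occurs only for $(H,y) \in \{(B_4,A_1),(D_4.S_3,A_1)\}$ and, when $p = 2$, also $(C_4,\tilde{A}_1)$ and $(\tilde{D}_4.S_3,\tilde{A}_1)$, while $\a(G,H,y) \leqs 5/7$ in every other such case. If the $G$-label of $y$ is not among these nine, then $\dim y^G$ is large, and Proposition \ref{p:dim} together with Lawther's fusion data in \cite{Law2} and $\dim(y^G \cap H) \leqs \dim H$ forces $\a(G,H,y) < 5/8$ for every $H \in \mathcal{R}$; the only subgroups of $\mathcal{R} \setminus \mathcal{L}$ are $G_2$ (when $p = 7$) and $A_1$ (when $p \geqs 13$), and for these a direct check with \cite{Law2} shows $4\,\a(G,H,y) < 3$ for every relevant $y$. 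Putting these estimates together: if some $\mathcal{C}_i$ is not the long root class $A_1$ --- respectively not $\tilde{A}_1$ when $p = 2$ --- then $\Sigma_X(H) \leqs 3 \cdot \tfrac{3}{4} + \tfrac{5}{8} = \tfrac{23}{8} < 3$ for $H \in \{B_4,D_4.S_3\}$ (respectively $H \in \{C_4,\tilde{D}_4.S_3\}$ with $p = 2$) and $\Sigma_X(H) < 3$ for every other $H \in \mathcal{R}$; while if every $\mathcal{C}_i$ equals $A_1$ (respectively $\tilde{A}_1$ with $p = 2$), then $X$ equals $(A_1,A_1,A_1,A_1)$ or, for $p = 2$, its graph-twist, and since a long root element has at least $20$ Jordan blocks on $V$ by \cite[Table 3]{Law1} the inequality \eqref{e:di} holds (up to $\tau$), so $X$ lies in Table \ref{tab:main}, again against our assumption. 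Therefore $\Sigma_X(H) < 3$ for all $H \in \mathcal{M}$, and Proposition \ref{p:fix} gives $\Delta \neq \emptyset$.

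The main obstacle, modest though it is, lies in the reductive case: one must verify, uniformly in $p$, that the order-$p$ unipotent classes falling outside the nine-element list of Proposition \ref{p:f4_dim3} --- a set that enlarges with $p$ --- never raise $\Sigma_X(H)$ to $3$ for any $H \in \mathcal{R}$, and likewise for the subgroups $G_2$ ($p = 7$) and $A_1$ ($p \geqs 13$) not covered by that proposition (the former needing a careful reading of the relevant fusion table in \cite{Law2}). Each such check is routine given \cite{Law1,Law2}, because the large unipotent classes of $F_4$ have correspondingly small fixed-point spaces on the coset varieties $G/H$.
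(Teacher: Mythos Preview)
Your argument is correct in outline but is far more laborious than necessary, and the final paragraph concedes a residual verification that you do not actually carry out. The paper's proof is a two-line application of \cite[Theorem~3.1]{BGG1}, which already provides a \emph{uniform} bound over all $H\in\mathcal{M}$ (parabolic and reductive alike): $\a(G,H,y)\leqs 3/4$ when $y\in A_1$ (or $y\in\tilde{A}_1$ with $p=2$), and $\a(G,H,y)<2/3$ otherwise. From this one gets
\[
\Sigma_X(H)\;<\;3\cdot\tfrac{3}{4}+\tfrac{2}{3}\;=\;\tfrac{35}{12}\;<\;3
\]
whenever not every $\mathcal{C}_i$ is the long (resp.\ short, $p=2$) root class, and Proposition~\ref{p:fix} finishes. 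There is no need to invoke Theorem~\ref{t:par} for parabolics, no need for the table in Proposition~\ref{p:f4_dim3}, and no need to split $\mathcal{R}$ into $\mathcal{L}$ and $\mathcal{R}\setminus\mathcal{L}$.

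What this buys is exactly the elimination of the obstacle you flag at the end: the $2/3$ bound from \cite{BGG1} covers \emph{every} non-root unipotent class of order $p$, so the ``classes outside the nine-element list'' never require separate treatment. Your route via Proposition~\ref{p:f4_dim3} plus ad~hoc estimates for the larger classes and for $H\in\{G_2,A_1\}$ would eventually work, but it re-derives (in weaker, piecemeal form) a bound that is already available off the shelf.
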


\begin{proof}
If $\mathcal{C}_i = A_1$ for all $i$ then $\Delta$ is empty by Corollary \ref{c:fixV}, and the same conclusion holds if $p=2$ and $\mathcal{C}_i = \tilde{A}_1$ for all $i$. Therefore, it remains to show that $\Delta$ is non-empty in all other cases. In view of Proposition \ref{p:fix}, it suffices to show that $\Sigma_X(H)<3$ for all $H \in \mathcal{M}$. 

Let $y \in G$ be a unipotent element of order $p$ and let $H \in \mathcal{M}$. By \cite[Theorem 3.1]{BGG1} we have $\a(G,H,y) \leqs 3/4$ if $y \in A_1$, or if $p=2$ and $y \in \tilde{A}_1$, otherwise $\a(G,H,y)<2/3$. Therefore,
\[
\Sigma_X(H) < 3\cdot \frac{3}{4}+\frac{2}{3} < 3
\]
and the result follows.
\end{proof}

Now assume $t=3$. We begin by showing that $\Delta$ is empty for the three special cases recorded in Table \ref{tab:special}.

\begin{lem}\label{l:f4_30}
 The set $\Delta$ is empty if $t=3$, $p=2$ and $(\mathcal{C}_1,\mathcal{C}_2,\mathcal{C}_3) = (\tilde{A}_1,(\tilde{A}_1)_2,(\tilde{A}_1)_2)$ or $(A_1,\tilde{A}_1,(\tilde{A}_1)_2)$.
\end{lem}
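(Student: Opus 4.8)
The plan is to apply Proposition~\ref{p:fibre} with $L = C_4 = {\rm Sp}_8$, the maximal subgroup of $G$ that arises because $p=2$. Since $C_4$ is maximal in the simple group $G$, the normaliser $M = N_G(C_4) = C_4$ is connected, so condition (i) of Proposition~\ref{p:fibre} will be automatic once class representatives are chosen inside $C_4$. I will treat the tuple $(\tilde{A}_1,(\tilde{A}_1)_2,(\tilde{A}_1)_2)$ directly with this subgroup, and then deduce the case $(A_1,\tilde{A}_1,(\tilde{A}_1)_2)$ from it using a graph automorphism together with a closure argument.

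For the first tuple, I would use Lawther's description of the fusion of unipotent $C_4$-classes in $G$ from \cite{Law2} (equivalently, the computations behind Proposition~\ref{p:f4_dim3}) to locate a $C_4$-class $\mathcal{D}_1$ of involutions contained in $\tilde{A}_1$ with $\dim\mathcal{D}_1 = 12$, and a $C_4$-class $\mathcal{D}_2$ of involutions contained in $(\tilde{A}_1)_2$ with $\dim\mathcal{D}_2 = 16$; these dimensions equal $\dim(\tilde{A}_1 \cap C_4)$ and $\dim((\tilde{A}_1)_2 \cap C_4)$, which one reads off from $\a(G,C_4,\tilde{A}_1) = 3/4$ and $\a(G,C_4,(\tilde{A}_1)_2) = 5/8$ via Proposition~\ref{p:dim}, and one finds that $\mathcal{D}_1$ and $\mathcal{D}_2$ have Jordan forms $(J_2^2,J_1^4)$ and $(J_2^4)$ on the natural module for $C_4$. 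Choosing $x_1 \in \mathcal{D}_1$ and $x_2, x_3 \in \mathcal{D}_2$ gives $x = (x_1,x_2,x_3) \in X$ with $\mathcal{D}_i = x_i^M$ and $\mathcal{D}_3 = \mathcal{D}_2$, and since $\dim G = 52$, $\dim C_4 = 36$ and $\dim X = 16+22+22 = 60$, condition (iii) holds because $\dim Y = 12+16+16 = 44 = \dim M + \dim X - \dim G$. For condition (ii), $G(y)^0 \leqs C_4$ for every $y \in Y$ is clear since $Y \subseteq C_4^3$; and because $C_4 = {\rm Sp}_8$ is classical, the existence of a tuple $x \in Y$ with $G(x)^0 = C_4$ follows from \cite{BGG2}. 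Indeed, the analogue of the inequality \eqref{e:di} fails for these three involution classes on the natural $8$-dimensional module for $C_4$ (the fixed-space dimensions are $6$, $4$ and $4$, with sum $14 < 16$), and one checks that the configuration is not among the exceptions in \cite[Tables~1,2]{BGG2}; hence $\Delta$ is non-empty for ${\rm Sp}_8$ with these classes and a generic tuple in $Y$ is Zariski dense in $C_4$. Proposition~\ref{p:fibre} then shows that $\Delta$ is empty for $(\tilde{A}_1,(\tilde{A}_1)_2,(\tilde{A}_1)_2)$.

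For $(A_1,\tilde{A}_1,(\tilde{A}_1)_2)$, note first that a graph automorphism of $G$ interchanges $A_1$ and $\tilde{A}_1$ and fixes $(\tilde{A}_1)_2$, so the previous paragraph also shows $\Delta$ is empty for $(A_1,(\tilde{A}_1)_2,(\tilde{A}_1)_2)$. Since $\tilde{A}_1 \preccurlyeq (\tilde{A}_1)_2$ in the closure order on unipotent classes of $G$ (Spaltenstein \cite[Section~IV.2]{Spal}), the closure of the variety $\mathcal{C}_1 \times \mathcal{C}_2 \times \mathcal{C}_3$ attached to $(A_1,\tilde{A}_1,(\tilde{A}_1)_2)$ is contained in the closure of the variety attached to $(A_1,(\tilde{A}_1)_2,(\tilde{A}_1)_2)$. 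If $\Delta$ were non-empty for the former, some tuple $x$ in it would satisfy $G(x) = G$ by definition; this $x$ lies in the closure of the latter variety, so Proposition~\ref{p:clos} would force $\Delta$ to be non-empty for $(A_1,(\tilde{A}_1)_2,(\tilde{A}_1)_2)$, a contradiction. Hence $\Delta$ is empty for $(A_1,\tilde{A}_1,(\tilde{A}_1)_2)$ as well. The step I expect to demand the most attention is verifying condition (ii) of Proposition~\ref{p:fibre}: one must pin down the $C_4$-classes $\mathcal{D}_1, \mathcal{D}_2$ (in particular their Jordan forms on the natural module) in order to apply the classification of \cite{BGG2} to ${\rm Sp}_8$ and to confirm that no exceptional case intervenes; the dimension count in (iii) and the closure reduction are then straightforward.
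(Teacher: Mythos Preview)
Your proposal is correct and follows essentially the same approach as the paper: apply Proposition~\ref{p:fibre} with $L=M=C_4$, embed the class representatives as involutions of type $a_2$ and $a_4$ in $C_4$, invoke \cite[Theorem~7]{BGG2} for the topological generation inside $C_4$, and verify the dimension count. The only cosmetic difference is in the reduction for the second tuple: the paper observes directly that $A_1 \preccurlyeq (\tilde{A}_1)_2$ and reduces $(A_1,\tilde{A}_1,(\tilde{A}_1)_2)$ (after reordering) to $(\tilde{A}_1,(\tilde{A}_1)_2,(\tilde{A}_1)_2)$ in one step, whereas you pass through the graph-automorphism image first; both routes are valid. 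One small point of precision: in characteristic~$2$ the Jordan form on the natural module for $C_4$ does not determine the conjugacy class (there are $a$- and $c$-type involutions with the same Jordan form), so you should specify that $\mathcal{D}_1$ and $\mathcal{D}_2$ are the $a_2$- and $a_4$-classes to nail down their dimensions as $12$ and $16$.
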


\begin{proof}
In view of Proposition \ref{p:clos}, noting that $A_1$ is contained in the closure of $(\tilde{A}_1)_2$ (see \cite{Spal} and Remark \ref{r:spal}), it suffices to show that $\Delta$ is empty for $(\mathcal{C}_1,\mathcal{C}_2,\mathcal{C}_3) = (\tilde{A}_1,(\tilde{A}_1)_2,(\tilde{A}_1)_2)$. Write $\mathcal{C}_i = y_i^G$ and observe that we may embed each $y_i$ in a maximal closed subgroup $L = C_4$, where $y_1$ is an $a_2$-type involution and $y_2, y_3$ are of type $a_4$ (see \cite[Section 4.5]{Law2}). If $W$ denotes the natural module for $L$, then
\[
\sum_{i=1}^3 \dim C_W(y_i) = 6+4+4 = 14 < 2\dim W
\]
and by applying \cite[Theorem 7]{BGG2} it follows that we may assume $G(y) = L$. If we set $\mathcal{D}_i = y_i^L$ then $\dim \mathcal{D}_1 = 12$ and $\dim \mathcal{D}_i = 16$ for $i=2,3$, whence
\[
\dim X - \dim (\mathcal{D}_1 \times \mathcal{D}_2 \times \mathcal{D}_3)  = 16 = \dim G - \dim L.
\]
Now $N_G(L) = L$ and one can check that all of the conditions in Proposition \ref{p:fibre} are satisfied (with $M=L$). We conclude that $\Delta$ is empty.
\end{proof}

\begin{lem}\label{l:f4_300}
The set $\Delta$ is empty if $t=3$ and $(\mathcal{C}_1,\mathcal{C}_2,\mathcal{C}_3) = (A_1, \tilde{A}_1, \tilde{A}_1)$.
\end{lem}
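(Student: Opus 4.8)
The plan is to apply Proposition \ref{p:fibre} with $L = M = B_4$, one of the reductive maximal subgroups of $G = F_4$ in Table \ref{tab:max}; note that $N_G(B_4) = B_4$ is connected, so condition (i) of the proposition will hold as soon as the class representatives are placed inside $B_4$. Before doing this I would dispose of the case $p = 2$ separately: here a graph automorphism interchanges $A_1$ and $\tilde{A}_1$, so the image of $X$ under such an automorphism is, up to reordering, $(\tilde{A}_1, A_1, A_1)$, and by inspecting \cite{Law1} one checks that the inequality in Theorem \ref{t:fixV} holds for this tuple on the $26$-dimensional module $V = W_G(\l_4)^*$ (indeed $\dim C_V(y) = 20$ for $y$ a long or short root element when $p=2$). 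Hence $\Delta$ is empty by Corollary \ref{c:fixV}, and from now on we may assume $p \geqs 3$.

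Write $\mathcal{C}_i = y_i^G$. Using Lawther's determination in \cite{Law2} of the fusion of unipotent $B_4$-classes in $G$, I would take $y_1$ in the $B_4$-class of long root elements (which fuses to the class $A_1$ of $G$) and $y_2, y_3$ in the $16$-dimensional $B_4$-class which fuses to $\tilde{A}_1$; in the notation of \cite{Law2} this is the class $2A_1$, whose elements have Jordan form $(J_2^4, J_1)$ on the natural $9$-dimensional module $W$ for $B_4$. Setting $\mathcal{D}_i = y_i^{B_4}$, we then have $\dim \mathcal{D}_1 = 12$ and $\dim \mathcal{D}_2 = \dim \mathcal{D}_3 = 16$, whereas $\dim \mathcal{C}_1 = \dim A_1^G = 16$ and $\dim \mathcal{C}_2 = \dim \mathcal{C}_3 = \dim \tilde{A}_1^G = 22$. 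Thus $\dim Y = 44$ and
\[
\dim M = 36 = 44 + 52 - 60 = \dim Y + \dim G - \dim X,
\]
which is condition (iii) of Proposition \ref{p:fibre}.

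For condition (ii), I would compute fixed spaces on $W$: since $\dim C_W(y_1) = 7$ and $\dim C_W(y_2) = \dim C_W(y_3) = 5$, we get $\sum_{i=1}^3 \dim C_W(y_i) = 17 < 18 = 2\dim W$. As $B_4$ is a classical group in odd characteristic and each $y_i$ has order $p$, \cite[Theorem 7]{BGG2} — together with a check that the triple $(\mathcal{D}_1,\mathcal{D}_2,\mathcal{D}_3)$ is not one of the exceptional configurations in \cite[Tables 1, 2]{BGG2} — shows that $\Delta$ is non-empty for the triple of $B_4$-classes $\mathcal{D}_1,\mathcal{D}_2,\mathcal{D}_3$. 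Hence there exists $x = (x_1,x_2,x_3) \in Y$ with $G(x) = B_4$, so $G(x)^0 = L$, while $G(y)^0 \leqs B_4 = L$ for every $y \in Y$ since $Y \subseteq B_4^3$. All hypotheses of Proposition \ref{p:fibre} now hold (with $M = L = B_4$), and we conclude that $\Delta$ is empty.

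The delicate point is the bookkeeping in condition (ii): one must read off from \cite{Law2} exactly which $B_4$-classes fuse to $A_1$ and to $\tilde{A}_1$ in $G$, and in particular it is essential that the relevant preimage of $\tilde{A}_1$ has dimension $16$ rather than $14$ (the short root class of $B_4$), since otherwise the dimension equality in (iii) fails; one must also confirm that the prime $p = 3$, which is bad for $F_4$ but good for $B_4$, affects neither this fusion nor the applicability of \cite[Theorem 7]{BGG2}. The remaining verifications are routine dimension computations.
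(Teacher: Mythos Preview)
Your proposal is correct and follows essentially the same approach as the paper: you handle $p=2$ via the graph automorphism and Corollary \ref{c:fixV}, and for $p\geqs 3$ you apply Proposition \ref{p:fibre} with $L=M=B_4$, embedding $y_1$ with Jordan form $(J_2^2,J_1^5)$ and $y_2,y_3$ with Jordan form $(J_2^4,J_1)$ on the natural module, then invoking \cite[Theorem 7]{BGG2} for topological generation of $B_4$ and checking the dimension equality $\dim Y + \dim G - \dim X = \dim M$. The paper's proof is slightly terser but otherwise identical in content and in the choice of classes.
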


\begin{proof}
This is clear when $p=2$ since we know by Corollary \ref{c:fixV} that $\Delta$ is empty for the triple $(\tilde{A}_1, A_1,A_1)$, which is the image of $(\mathcal{C}_1,\mathcal{C}_2,\mathcal{C}_3)$ under a graph automorphism. Now assume $p \geqs 3$ and embed each $y_i$ in a maximal closed subgroup $L = B_4$, where $y_1$ has Jordan form $(J_2^2,J_1^5)$ on the natural module for $L$, while $y_2$ and $y_3$ both have Jordan form $(J_2^4,J_1)$. Set $\mathcal{D}_i = y_i^L$ and note that 
$N_G(L) = L$ and 
\[
\dim \mathcal{C}_1 = 16, \; \dim \mathcal{C}_2 = \dim \mathcal{C}_3 = 22, \; \dim \mathcal{D}_1 = 12, \; \dim \mathcal{D}_2 = \dim \mathcal{D}_3 = 16.
\]
In addition, by \cite[Theorem 7]{BGG2}, we may assume that the $y_i$ topologically generate $L$. Setting $Y = \mathcal{D}_1 \times \mathcal{D}_2 \times \mathcal{D}_3$ we compute $\dim X - \dim Y = 16 = \dim G - \dim L$ and thus $\Delta$ is empty by Proposition \ref{p:fibre}.
\end{proof}

\begin{thm}\label{t:f4_3}
The conclusion to Theorem \ref{t:main} holds when $G = F_4$ and $t=3$.
\end{thm}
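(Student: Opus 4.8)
The plan is to bring together the results assembled in Sections \ref{s:prel}--\ref{s:parab} with Lemmas \ref{l:f4_30} and \ref{l:f4_300}. For the backward implication there is little to do: if $X = \mathcal{C}_1 \times \mathcal{C}_2 \times \mathcal{C}_3$ appears in Table \ref{tab:main} then $\Delta$ is empty by Corollary \ref{c:fixV}, while the relevant entries of Table \ref{tab:special} for $G=F_4$ and $t=3$, namely the triples $(A_1,\tilde{A}_1,\tilde{A}_1)$, $(A_1,\tilde{A}_1,(\tilde{A}_1)_2)$ and $(\tilde{A}_1,(\tilde{A}_1)_2,(\tilde{A}_1)_2)$, are precisely those handled by Lemmas \ref{l:f4_300} and \ref{l:f4_30}. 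So it remains to show that $\Delta$ is non-empty for every triple of conjugacy classes of elements of order $p$ not recorded in Tables \ref{tab:main} and \ref{tab:special}.

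First I would invoke Proposition \ref{p:clos} together with Spaltenstein's description of the closure order on the unipotent classes of $F_4$ (see \cite[Section IV.2]{Spal}): since the set of triples for which $\Delta$ is empty is closed downwards under this order, it suffices to verify that $\Delta$ is non-empty for the finitely many triples that are minimal among those not listed in the tables. Inspecting the Hasse diagram one finds that each such minimal triple has all three components among the short list of classes appearing in Table \ref{tab:beta_f4} --- typical examples being $(A_1,A_1,\tilde{A}_2)$, $(A_1,\tilde{A}_1,A_1\tilde{A}_1)$ and $(\tilde{A}_1,\tilde{A}_1,\tilde{A}_1)$ --- which is exactly why Proposition \ref{p:f4_dim3} is restricted to those classes. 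Fix such a minimal triple $X$. Since $t=3$, Theorem \ref{t:plus2} gives $\Delta^{+}\ne\emptyset$, so by Proposition \ref{p:fix} the task reduces to showing $\Sigma_X(H)<2$ for every $H \in \mathcal{M} = \mathcal{P}\cup\mathcal{R}$.

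For $H \in \mathcal{P}$ a maximal parabolic this is immediate from Theorem \ref{t:par}: for $G = F_4$ and $t=3$ the only ways to have $\Sigma_X(H) \geqs 2$ are for \eqref{e:di} to hold for the module $V$ of Table \ref{tab:mod} (so that $X$ lies in Table \ref{tab:main} by Theorem \ref{t:fixV}), or for the analogous inequality to hold after applying the graph automorphism (again Table \ref{tab:main}), or for $X = (A_1,\tilde{A}_1,(\tilde{A}_1)_2)$; all of these are excluded by the minimality hypothesis. For $H \in \mathcal{R}\setminus\mathcal{L}$ --- that is, $H = G_2$ with $p=7$ or $H = A_1$ with $p \geqs 13$ --- Theorem \ref{t:long} shows that $H$ contains no long root elements of $G$, so no $y_i$ lies in the class $A_1$; then \cite[Theorem 3.1]{BGG1} gives $\a(G,H,y_i) < 2/3$ for each $i$ and hence $\Sigma_X(H) < 2$. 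Finally, for $H \in \mathcal{L}$ one appeals to Proposition \ref{p:f4_dim3}: summing the relevant entries of Table \ref{tab:beta_f4} over the three components of $X$ gives $\Sigma_X(H) < 2$ in every case, with a little extra care when $p=2$ (where $C_4$ and $\tilde{D}_4.S_3$ also belong to $\mathcal{R}$, and the graph automorphism interchanges $A_1$ and $\tilde{A}_1$). This finishes the argument modulo the finite verifications.

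The hard part is that these estimates are genuinely tight: for several of the triples in the tables one has $\Sigma_X(H) = t-1 = 2$ for a suitable reductive $H$ --- for instance $\Sigma_X(C_4) = 2$ for $(\tilde{A}_1,(\tilde{A}_1)_2,(\tilde{A}_1)_2)$ when $p=2$, and $\Sigma_X(B_4) = 2$ for $(A_1,\tilde{A}_1,\tilde{A}_1)$ when $p \geqs 3$ --- so the crude uniform fixed-space bounds of \cite[Theorem 3.1]{BGG1} are not sharp enough and one really needs the precise values assembled in Proposition \ref{p:f4_dim3}. The remaining work is essentially bookkeeping: using Spaltenstein's closure diagram to pin down which triples are minimal among the non-listed ones, tracking the characteristic-$2$ phenomena (the extra class $(\tilde{A}_1)_2$ and the graph automorphism swapping long and short root elements), and confirming that the nine classes of Table \ref{tab:beta_f4} exhaust the components that can occur in a minimal non-listed triple.
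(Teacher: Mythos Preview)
Your approach is essentially the same as the paper's: both directions are handled via Corollary~\ref{c:fixV} and Lemmas~\ref{l:f4_30}, \ref{l:f4_300} for the listed triples, and then Proposition~\ref{p:fix} together with Theorem~\ref{t:par} and Proposition~\ref{p:f4_dim3} for the rest. The paper organises the reductive case by a case split on how many $\mathcal{C}_i$ equal $A_1$ (two, one, or none), reducing to the specific triples $(A_1,A_1,\tilde{A}_2)$, $(A_1,A_1,A_2\tilde{A}_1)$ and $(A_1,\tilde{A}_1,A_1\tilde{A}_1)$ rather than enumerating all minimal non-listed triples, but this is a cosmetic difference.

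One sentence needs repair. For $H \in \mathcal{R}\setminus\mathcal{L}$ you write ``Theorem~\ref{t:long} shows that $H$ contains no long root elements of $G$, so no $y_i$ lies in the class $A_1$''. This is a non sequitur: the $y_i$ are elements of $G$, not of $H$, and they may perfectly well lie in the class $A_1$. What Theorem~\ref{t:long} actually gives is that $y_i^G \cap H = \emptyset$ whenever $y_i \in A_1$, so $\a(G,H,y_i)=0$ for such $y_i$; combined with $\a(G,H,y_j)<2/3$ for the remaining $y_j$ (by \cite[Theorem~3.1]{BGG1}) you still get $\Sigma_X(H)<2$. The conclusion is unaffected, but the reasoning as written is wrong.
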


\begin{proof}
Set $\mathcal{C}_i = y_i^G$. In view of Corollary \ref{c:fixV} and Lemmas \ref{l:f4_30} and \ref{l:f4_300}, we just need to show that $\Delta$ is non-empty when $(\mathcal{C}_1,\mathcal{C}_2,\mathcal{C}_3)$ is not one of the triples in Tables \ref{tab:main} or \ref{tab:special}, up to reordering and graph automorphisms when $p=2$. To do this, we will verify the bound $\Sigma_X(H)<2$ for all $H \in \mathcal{M}$ and apply Proposition \ref{p:fix}. By Theorem \ref{t:par}, this bound holds when $H \in \mathcal{P}$, so we may assume $H \in \mathcal{R}$.

First assume $\mathcal{C}_1 = \mathcal{C}_2 = A_1$. Here $p \geqs 3$ and by considering the closure relation on unipotent classes, it suffices to show that $\Sigma_X(H)<2$ when $\mathcal{C}_3 = \tilde{A}_2$ or $A_2\tilde{A}_1$. If $H \not\in \mathcal{L}$ then $H \cap \mathcal{C}_i$ is empty for $i=1,2$, so $\Sigma_X(H) = \a(G,H,y_3)$ and the result follows. Therefore, we may assume $H \in \mathcal{L}$ and one can check that the bounds in Proposition \ref{p:f4_dim3} are sufficient. Next suppose  $\mathcal{C}_1 = A_1$ and $\mathcal{C}_2 \ne A_1$. Here it is sufficient to show that $\Sigma_X(H)<2$ for the triple $(A_1,\tilde{A}_1,A_1\tilde{A}_1)$ and once again we find that the bounds in Proposition \ref{p:f4_dim3} are good enough. Finally, if $\mathcal{C}_1 \ne A_1$ (and also $\mathcal{C}_1 \ne \tilde{A}_1$ if $p=2$), then \cite[Theorem 3.1]{BGG1} implies that $\a(G,H,y_i)<2/3$ for all $H \in \mathcal{M}$ and thus $\Sigma_X(H)<2$.
\end{proof}

Finally, let us assume $t=2$. First we handle the special cases in Table \ref{tab:special}. 

\begin{lem}\label{l:f4_31}
The set $\Delta$ is empty if $t=2$ and $(\mathcal{C}_1,\mathcal{C}_2) = (\tilde{A}_1,\tilde{A}_2)$, $(\tilde{A}_1,A_2\tilde{A}_1)$ or $(\tilde{A}_1,B_2)$.
\end{lem}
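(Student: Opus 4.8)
Since $t=2$, Theorem \ref{t:main} forces $p \geqs 3$, and the precise constraint on $p$ in each of the three cases is simply the order-$p$ condition on the relevant unipotent classes, read off from \cite{Law1} exactly as in Remark \ref{r:main}(b). Write $\mathcal{C}_i = y_i^G$. Following the pattern of Lemmas \ref{l:f4_30} and \ref{l:f4_300}, the plan is to establish all three statements by a uniform application of Proposition \ref{p:fibre}: I would take $L$ to be a maximal closed connected reductive subgroup of $G = F_4$, so that $M = N_G(L) = L$ is connected and (taking $x_i = y_i$) condition (i) of Proposition \ref{p:fibre} holds, and then embed suitable representatives $y_1, y_2$ in $L$. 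In good characteristic the relevant candidates are $L = B_4 \cong {\rm Spin}_9$ (dimension $36$) and $L = A_1C_3$ (dimension $24$).

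The first task is to fix $L$ and the representatives. Using Lawther's determination in \cite{Law2} of the fusion of unipotent classes of maximal connected reductive subgroups into $G$-classes, I would locate inside $L$ a unipotent class fusing to $\tilde{A}_1$ and a unipotent class fusing to the second member of the pair, and take $y_1, y_2$ in these classes. For $(\tilde{A}_1, A_2\tilde{A}_1)$ and $(\tilde{A}_1, B_2)$ one works in $L = B_4$, reading off the relevant orthogonal Jordan types on the natural $9$-dimensional module; for $(\tilde{A}_1, \tilde{A}_2)$ the subgroup $B_4$ is inadequate, since the $B_4$-classes fusing to $\tilde{A}_1$ and $\tilde{A}_2$ are too small to satisfy the dimension identity below (indeed the $B_4$ row of Table \ref{tab:beta_f4} gives $\a(G,B_4,y)=0$ for $y \in \tilde{A}_2$), so here one takes $L = A_1C_3$ and embeds the $\tilde{A}_2$-representative in the $C_3$-factor. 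I would then read off $\dim \mathcal{C}_i$ from the centraliser dimensions in \cite[Table 22.2.4]{LS_book} (so $\dim X = \dim\mathcal{C}_1+\dim\mathcal{C}_2$), compute $\dim \mathcal{D}_i = \dim y_i^L$ from the Jordan forms of $y_i$ on the natural modules of the classical factors of $L$ using the standard centraliser-dimension formulae for ${\rm Sp}$ and ${\rm O}$, and verify the identity $\dim(\mathcal{D}_1 \times \mathcal{D}_2) + \dim G - \dim X = \dim L$ of Proposition \ref{p:fibre}(iii), equivalently $\dim G - \dim L = \sum_{i=1}^{2}(\dim C_G(y_i) - \dim C_L(y_i))$. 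In practice this identity forces $y_1$ and $y_2$ to lie in the largest $L$-classes fusing to the prescribed $G$-classes, which pins them down.

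It remains to secure condition (ii) of Proposition \ref{p:fibre}. Since every tuple in $Y = \mathcal{D}_1 \times \mathcal{D}_2$ already lies in the closed subgroup $L$, the condition $G(y)^0 \leqs L$ for all $y \in Y$ is automatic, so the only substantive point is to exhibit one tuple $x$ with $G(x)^0 = L$. When $L$ is classical this follows from \cite[Theorem 7]{BGG2}: I would check that the fixed-space inequality $\sum_i \dim C_W(y_i) \geqs (t-1)\dim W$ fails on the natural module $W$ of $L$ and that $(\mathcal{D}_1, \mathcal{D}_2)$ is not one of the exceptional configurations of \cite[Tables 1, 2]{BGG2}, so that the corresponding set $\Delta$ for $L$ is non-empty and some tuple of $Y$ topologically generates $L$. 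When $L = A_1C_3$ one applies this to the $C_3$-factor, handles the ${\rm SL}_2$-factor directly, and then uses that a closed subgroup of $A_1 \times C_3$ surjecting onto both factors is the whole group, since $A_1 \not\cong C_3$. With conditions (i)--(iii) in hand, Proposition \ref{p:fibre} yields that $\Delta$ is empty. The main obstacle is the first task: choosing $L$ correctly for each pair — in particular realising that $(\tilde{A}_1, \tilde{A}_2)$ must be treated with $A_1C_3$ rather than $B_4$, and locating a large enough $L$-class fusing to $B_2$ — and then matching Lawther's fusion data to the exact dimension identity in Proposition \ref{p:fibre}(iii); if no maximal reductive $L$ works for some pair, the fallback is a smaller connected reductive $L$ with $M = N_G(L)$ still connected, at the cost of a more delicate count.
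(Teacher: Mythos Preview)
Your overall strategy matches the paper's exactly: apply Proposition \ref{p:fibre} with a classical subgroup $L$, read off the fusion from \cite{Law2}, verify the dimension identity, and invoke \cite[Theorem 7]{BGG2} for condition (ii). For $(\tilde{A}_1,B_2)$ you are spot on with $L=B_4$ and the Jordan types $(J_2^4,J_1)$ and $(J_4^2,J_1)$; the paper then deduces $(\tilde{A}_1,A_2\tilde{A}_1)$ from this via the closure relation $A_2\tilde{A}_1\preccurlyeq B_2$ and Proposition \ref{p:clos}, whereas you propose to treat it directly in $B_4$ --- either route works.

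There is, however, a genuine gap in your handling of $(\tilde{A}_1,\tilde{A}_2)$. You take $L=A_1C_3$ and plan to secure condition (ii) by a Goursat argument, arranging that $(y_1,y_2)$ surjects onto both simple factors. But this is impossible: every element of $\tilde{A}_1\cap A_1C_3$ and of $\tilde{A}_2\cap A_1C_3$ has trivial $A_1$-component. Indeed, the $A_1$ factor consists of long root elements of $G$, so any $(a,b)$ with $a\ne 1$ lies in the $G$-class $A_1$, $A_1\tilde{A}_1$, $\tilde{A}_2A_1$, etc., never in $\tilde{A}_1$ or $\tilde{A}_2$ (this is visible in \cite[Section 4.6]{Law2}, and is also why the dimension identity you correctly note forces the largest $L$-classes, which are precisely $(1,(J_2^2,J_1^2))$ and $(1,(J_3^2))$). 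Hence $\la y_1,y_2\ra\leqs C_3$ and $G(x)^0\ne A_1C_3$, so condition (ii) fails for this choice of $L$.

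The fix is exactly the ``fallback'' you mention at the end, and it is what the paper does: take $L=C_3$ rather than $A_1C_3$. Then $M=N_G(L)=A_1C_3$ is still connected, the classes $\mathcal{D}_i=y_i^M$ and hence the dimension count in (iii) are unchanged, and condition (ii) now asks only for $G(x)^0=C_3$, which \cite[Theorem 7]{BGG2} supplies directly from the $C_3$-Jordan forms $(J_2^2,J_1^2)$ and $(J_3^2)$. No Goursat argument is needed.
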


\begin{proof}
Write $\mathcal{C}_i = y_i^G$ and first assume $(\mathcal{C}_1,\mathcal{C}_2) = (\tilde{A}_1,\tilde{A}_2)$. We may embed $y_1,y_2 \in L = C_3$, where $M = N_G(L) = A_1C_3$ is a maximal closed subgroup of $G$. Here $y_1$ and $y_2$ have respective Jordan forms $(J_2^2,J_1^2)$ and $(J_3^2)$ on the natural module for $L$, so \cite[Theorem 7]{BGG2} implies that $G(x) = L$ for some $x \in X$. Let $\mathcal{D}_i = y_i^M$ and note that $\dim \mathcal{D}_1 = 10$ and $\dim \mathcal{D}_2 = 14$. Then  
\[
\dim X - \dim(\mathcal{D}_1 \times \mathcal{D}_2) = 28 = \dim G - \dim M
\]
and we now conclude by applying Proposition \ref{p:fibre}.

To complete the proof, it suffices to show that $\Delta$ is empty when $(\mathcal{C}_1,\mathcal{C}_2) = (\tilde{A}_1,B_2)$ since the class $A_2\tilde{A}_1$ is contained in the closure of the class labelled $B_2$. Here we embed $y_1,y_2 \in L = B_4$, with respective Jordan forms $(J_2^4,J_1)$ and $(J_4^2,J_1)$ on the natural module for $L$ and the reader can check that the same argument applies, via \cite[Theorem 7]{BGG2} and Proposition \ref{p:fibre} (note that $L$ is maximal, so $M = N_G(L) = L$).
\end{proof}

\begin{rem}\label{r:f4_lie}
Let us consider the cases in Lemma \ref{l:f4_31} under the assumption that the field $k$ has arbitrary characteristic $p \geqs 0$. In view of \cite[Corollary 3.20]{BGG2}, we see that $\Delta$ is non-empty only if 
\begin{equation}\label{e:lieX}
\dim X \geqs \dim G + {\rm rk}(G) - \dim Z(\mathcal{L}(G)),
\end{equation}
where ${\rm rk}(G)$ and $Z(\mathcal{L}(G))$ denote the rank of $G$ and the centre of the Lie algebra of $G$, respectively. Since $Z(\mathcal{L}(G)) = 0$ for $G = F_4$, it follows that $\Delta$ is non-empty only if $\dim X \geqs 56$. In particular, if 
$(\mathcal{C}_1,\mathcal{C}_2) = (\tilde{A}_1,\tilde{A}_2)$ then 
$\dim X = 52 - 6\delta_{2,p}$ and $\Delta$ is empty. On the other hand, if $p \ne 2$ and $(\mathcal{C}_1,\mathcal{C}_2) = (\tilde{A}_1,A_2\tilde{A}_1)$ or $(\tilde{A}_1,B_2)$, then $\dim X = 56$ and so this approach via \cite[Corollary 3.20]{BGG2} is inconclusive in these two cases.
\end{rem}

\begin{thm}\label{t:f4_2}
The conclusion to Theorem \ref{t:main} holds when $G = F_4$, $t=2$ and $p \geqs 3$.
\end{thm}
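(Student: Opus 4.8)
The plan is to deduce the statement from Proposition \ref{p:fix}, using the cases already treated for $F_4$ together with the closure relation on unipotent classes to keep the case analysis manageable. Write $\mathcal{C}_i = y_i^G$. By Corollary \ref{c:fixV}, $\Delta$ is empty for every pair $(\mathcal{C}_1,\mathcal{C}_2)$ appearing in Table \ref{tab:main}, and by Lemma \ref{l:f4_31} the same holds for the special pairs $(\tilde{A}_1,\tilde{A}_2)$, $(\tilde{A}_1,A_2\tilde{A}_1)$, $(\tilde{A}_1,B_2)$; the one further special pair, $(A_1,B_3)$, can only occur when $p$ is large, and I would dispose of it via the fibre argument of Proposition \ref{p:fibre}, embedding representatives of $A_1$ and $B_3$ in a maximal subgroup $L = B_4$ (so that $M = N_G(L) = L$), using \cite[Theorem 7]{BGG2} to arrange $G(x) = L$ and then verifying conditions (i)--(iii) of Proposition \ref{p:fibre}. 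By Proposition \ref{p:clos} and Spaltenstein's closure diagram for $F_4$, it follows that $\Delta$ is empty for every pair dominated, in the closure order, by one of the pairs just described (up to reordering). It therefore remains to show that $\Delta$ is non-empty for all pairs not of this form.

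Fix such a pair $(\mathcal{C}_1,\mathcal{C}_2)$. Since $p \geqs 3$, Theorem \ref{t:plus2} gives $\Delta^+ \ne \emptyset$ (the exceptional case there forces $(G,p) = (F_4,2)$), so by Proposition \ref{p:fix} it is enough to prove $\Sigma_X(H) < 1$ for every $H \in \mathcal{M} = \mathcal{P} \cup \mathcal{R}$. For $H \in \mathcal{P}$ this is immediate from Theorem \ref{t:par}: alternatives (i) and (ii) there would place $X$ among the cases of Table \ref{tab:main}, alternative (iii) is either a triple with $t = 3$ or the pair $(\tilde{A}_1,\tilde{A}_2)$ already excluded, and (iv) concerns $E_6$, $E_7$ and $E_8$. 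For the reductive subgroups, Tables \ref{tab:max} and \ref{tab:long} show that $\mathcal{R} \setminus \mathcal{L} = \{G_2\ (p = 7),\, A_1\ (p \geqs 13)\}$; these are small, and the fusion of unipotent classes recorded in \cite{Law2} (or the bounds of \cite{LLS1}) gives $\a(G,H,y_i) < 1/2$ for each $i$, whence $\Sigma_X(H) < 1$. For $H \in \mathcal{L}$ I would instead apply the upper bounds $\a(G,H,y) \leqs \b$ of Proposition \ref{p:f4_dim3} (Table \ref{tab:beta_f4}), recalling that $\a(G,H,y) = 0$ whenever $H$ does not meet the $G$-class of $y$.

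The hard part is precisely the reductive case $H \in \mathcal{L}$. The target $t-1 = 1$ is far tighter than the bound $2$ available when $t = 3$, so the crude estimate $\a(G,H,y) < 2/3$ from \cite[Theorem 3.1]{BGG1} does not suffice, since two such terms exceed $1$. The key is to use the closure relation aggressively to reduce the set of undominated pairs $(\mathcal{C}_1,\mathcal{C}_2)$ to a short explicit list --- in effect one is left only with pairs both of whose classes are comparatively large --- and then to check directly from Table \ref{tab:beta_f4} that $\b_1 + \b_2 < 1$ for each $H \in \mathcal{L}$. The most delicate configurations are those with $\mathcal{C}_1 = A_1$, where $\b$ can be as large as $3/4$ (for instance $\a(G,B_4,A_1) = 3/4$), forcing $\mathcal{C}_2$ to be large; but I expect these to be exactly the pairs already dominated either by a case of Table \ref{tab:main} (where \eqref{e:di} holds) or by one of the special pairs, so that they will have been removed at the first step. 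Carrying this bookkeeping out over the short list of remaining pairs and the subgroups of $\mathcal{L}$ then completes the proof.
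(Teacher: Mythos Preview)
Your approach is essentially the same as the paper's: handle the pairs in Tables \ref{tab:main} and \ref{tab:special} by Corollary \ref{c:fixV} and Lemma \ref{l:f4_31}, then for the remaining pairs verify $\Sigma_X(H)<1$ for all $H\in\mathcal{M}$ using Theorem \ref{t:par} for parabolics and Proposition \ref{p:f4_dim3} for $H\in\mathcal{L}$, reducing via closure to a short list of minimal pairs.

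One correction: the pair $(A_1,B_3)$ is not a ``further special pair'' requiring the fibre argument of Proposition \ref{p:fibre}. It already appears in Table \ref{tab:main} and is therefore disposed of by Corollary \ref{c:fixV}; indeed, with $V=W_G(\l_4)^*$ one has $\dim C_V(y_1)+\dim C_V(y_2)>26$. So your proposed embedding into $B_4$ is unnecessary, and the list of special $t=2$ pairs for $F_4$ in Table \ref{tab:special} is exactly the three pairs handled by Lemma \ref{l:f4_31}.

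A second minor point: your treatment of $H\in\mathcal{R}\setminus\mathcal{L}=\{G_2\,(p=7),\,A_1\,(p\geqs 13)\}$ via the blanket bound $\a(G,H,y_i)<1/2$ is correct but slightly different from the paper's argument. The paper instead records exactly which $G$-classes these subgroups meet (namely $A_1\tilde{A}_1$, $\tilde{A}_2A_1$, $F_4(a_3)$, $F_4(a_2)$ for $H=G_2$, and $F_4$ for $H=A_1$) and then observes, within each case of the closure reduction, that at least one of $\mathcal{C}_1$, $\mathcal{C}_2$ fails to meet $H$, so $\Sigma_X(H)<1$ trivially. Either route works. The paper's explicit closure reduction lands on $\mathcal{C}_2=C_3$ when $\mathcal{C}_1=A_1$, on $\mathcal{C}_2=\tilde{A}_2A_1$ when $\mathcal{C}_1=\tilde{A}_1$, on $\mathcal{C}_2\in\{\tilde{A}_2,A_2\tilde{A}_1\}$ when $\mathcal{C}_1\in\{A_1\tilde{A}_1,A_2\}$, and on $\mathcal{C}_1,\mathcal{C}_2\in\{\tilde{A}_2,A_2\tilde{A}_1\}$ otherwise; your expectation that the ``bookkeeping'' works out is borne out by Table \ref{tab:beta_f4} in each of these cases.
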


\begin{proof}
By combining Corollary \ref{c:fixV} and Lemma \ref{l:f4_31}, it remains to show that $\Delta$ is non-empty for every pair $(\mathcal{C}_1,\mathcal{C}_2)$ not appearing in Tables \ref{tab:main} and \ref{tab:special} (as usual, up to reordering and graph automorphisms). In view of Proposition \ref{p:fix} and Theorem \ref{t:par}, it suffices to verify the bound $\Sigma_X(H)<1$ for all $H \in \mathcal{R}$.

Note that if $H \in \mathcal{R} \setminus \mathcal{L}$, then $H = G_2$ (with $p=7$) or $H = A_1$ and $p \geqs 13$ (see Tables \ref{tab:max} and \ref{tab:long}). By \cite[Section 5.2]{Law2}, $H = G_2$ meets the nontrivial unipotent classes labelled $A_1\tilde{A}_1$, $\tilde{A}_2A_1$, $F_4(a_3)$ and $F_4(a_2)$. Similarly, the nontrivial unipotent elements in $H = A_1$ are in the $G$-class labelled $F_4$ (see \cite[Table 27]{Law2}).

Suppose $\mathcal{C}_1 = A_1$. As before, if $H \not\in \mathcal{L}$ then $\a(G,H,y_1) = 0$ for $y_1 \in \mathcal{C}_1$ and the desired bound follows. So we only need to consider the subgroups in $\mathcal{L}$. By carefully reviewing the closure relation on unipotent classes, we may assume $\mathcal{C}_2$ is the class labelled $C_3$ and the desired bound $\Sigma_X(H)<1$ follows from Proposition \ref{p:f4_dim3}. Similarly, if $\mathcal{C}_1 = \tilde{A}_1$ then we may assume $\mathcal{C}_2 = \tilde{A}_2A_1$ and $H \in \mathcal{L}$, finding once again that the bounds in Proposition \ref{p:f4_dim3} are good enough. The same argument applies if $\mathcal{C}_1 \in \{ A_1\tilde{A}_1, A_2\}$, noting that in each case we may assume $\mathcal{C}_2 \in \{\tilde{A}_2, A_2\tilde{A}_1\}$. 

Finally, let us assume $\mathcal{C}_i \not\in \{A_1, \tilde{A}_1, A_1\tilde{A}_1, A_2\}$ for $i=1,2$. By considering closures, it suffices to show that $\Delta$ is non-empty when each $\mathcal{C}_i$ is contained in $\{\tilde{A}_2, A_2\tilde{A}_1\}$ and it is easy to check that the bounds in Proposition \ref{p:f4_dim3} are sufficient.
\end{proof}

This completes the proof of Theorem \ref{t:main} for $G = F_4$.

\subsection{$G = E_6$}\label{ss:e6}

In this section we assume $G = E_6$. We adopt the labelling of unipotent classes given in \cite[Table 22.1.3]{LS_book} and we define the subgroup collections $\mathcal{M}$, $\mathcal{P}$, $\mathcal{R}$ and $\mathcal{L}$ as in Section \ref{ss:sub}. We begin with the following result on fixed point spaces. 

\begin{prop}\label{p:e6_dim3}
Let $H \in \mathcal{L}$ and let $y \in G$ be an element of order $p$ in one of the following conjugacy classes
\[
A_1, \, A_1^2, \, A_1^3, \, A_2, \, A_2A_1, \, A_2^2A_1, \, A_4A_1. 
\]
Then $\a(G,H,y) \leqs \b$, where $\b$ is recorded in Table \ref{tab:beta_e6}.
\end{prop}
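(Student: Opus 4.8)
The plan is to follow the template of the proof of Proposition \ref{p:f4_dim3}: treat each $H \in \mathcal{L}$ in turn and bound $\dim C_{\O}(y)$ via Proposition \ref{p:dim}, where $\O = G/H$ and $\dim \O = \dim G - \dim H$. The quantity $\dim y^G$ is read off from the centraliser data in \cite[Table 22.1.3]{LS_book}, so the only real work is to compute, or suitably bound, $\dim(y^G \cap H)$ for each of the seven classes in the statement.

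First I would dispose of the subgroups in which every element of order $p$ lies in the connected component, namely $H \in \{F_4,\, A_1A_5,\, C_4,\, A_2G_2\}$, with $p \geqs 3$ when $H = C_4$ (note that $F_4$ is connected, even though $F_4 = C_G(\tau)$ for a graph automorphism $\tau$ when $p=2$, and that $A_2.2$ and $G_2$ do not lie in $\mathcal{L}$). For these, Lawther \cite{Law2} determines the $G$-class of each unipotent $H$-class, so $\dim(y^G \cap H) = \max_i \dim y_i^H$ is immediate; substituting into Proposition \ref{p:dim} and checking the relevant rows of Table \ref{tab:beta_e6} is then routine.

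The substantive part, and the step I expect to be the main obstacle, is the disconnected subgroups $D_4T_2.S_3$ and $A_2^3.S_3$, together with $T.W$, where $|H:H^0|$ is divisible by $p$ precisely for $p \in \{2,3\}$ (also $p=5$ for $T.W$), so that one must treat elements $y \in H \setminus H^0$ of order $p$. Here I would argue as in the $F_4$ analysis of $D_4.S_3$ and $A_2\tilde{A}_2.2$: determine $C_{H^0}(y)$ from Proposition \ref{p:graph} (a triality element of a $D_4$ factor centralising $G_2$ or $C_{G_2}(u)$, an order-$2$ or order-$3$ outer element permuting $A_2$ factors with the corresponding centraliser), then restrict the $27$-dimensional Weyl module $V = W_G(\l_1)$ — or the adjoint module $\mathcal{L}(G)$ where it separates more classes — to $H^0$ using the decompositions in \cite{Thomas} and \cite{LS_book}, read off the Jordan form of $y$ on $V$ from how $y$ permutes the summands, and identify the $G$-class of $y$ from \cite[Table 4]{Law1}. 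When the module fails to separate two candidate $G$-classes I would fall back on the centraliser argument of \cite[Proposition 5.14]{BGS}; for $p=2$ outer elements of $D_4T_2.S_3$ one can instead route $y$ through a connected overgroup (for instance $D_4.2 < B_4 < F_4 < E_6$) whose unipotent fusion is already recorded. For $H = T.W$, every element of order $p$ is contained in a connected reductive subgroup of maximal rank (such as $A_1A_5$ or $A_2^3$), so its $G$-class can again be read off from \cite{Law2} and $\dim(y^G \cap H)$ bounded directly, using in addition that $\dim H = \operatorname{rk}(G)$. Once $\a(G,H,y)$ has been computed in every case, the proof reduces to confirming the entries of Table \ref{tab:beta_e6}.
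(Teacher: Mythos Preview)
Your proposal is correct and follows essentially the same approach as the paper. Two minor simplifications are worth noting. First, the paper disposes of the long root class $A_1$ uniformly by citing \cite[Theorem 3.1]{BGG1}, rather than computing $\dim(y^G \cap H)$ for each $H$. Second, for $H = T.W$ the paper does not attempt to identify the $G$-class of elements of order $p$ in $H$ at all: it simply invokes the trivial bound $\dim(y^G \cap H) \leqs \dim H = 6$, which already yields every entry in the $T.W$ row of Table \ref{tab:beta_e6}. Your suggestion of embedding such elements in a connected maximal-rank subgroup is unnecessary here (and would need justification), though your parenthetical remark about $\dim H = \operatorname{rk}(G)$ is exactly the point. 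For $A_2^3.S_3$ and $D_4T_2.S_3$ the paper's treatment matches yours, except that it shortcuts the $p=2$ case of $A_2^3.S_3$ by citing \cite[Lemma 4.12]{BTh0} and handles $D_4T_2.S_3$ entirely via the chain $D_4.S_3 < F_4 < G$ together with the $F_4$ analysis already done in Proposition \ref{p:f4_dim3}, rather than restricting $W_G(\l_1)$ directly.
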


{\small
\begin{table}
\[
\begin{array}{l|ccccccc}
& A_1 & A_1^2 & A_1^3 & A_2 & A_2A_1 & A_2^2A_1 & A_4A_1  \\ \hline
A_1A_5 & 7/10 & 3/5 & 1/2 & 9/20 & 2/5 & 3/10 & 1/5 \\
F_4 & 10/13 & 8/13 & 7/13 & 7/13 & 0 & 4/13 & 0 \\
C_4 & 2/3 & 4/7 & 10/21 & 10/21 & 0 & 2/7 & 0 \\
A_2G_2 & 5/7 & 1/2 & 1/2 & 3/7 & 11/28 & 9/28 & 0 \\
A_2^3.S_3 & 2/3 & 5/9 & 1/2 & 1/3 & 1/3 & 8/27 & 0 \\
D_4T_2.S_3 & 3/4 & 7/12 & 25/48 & 1/2 & 0 & 1/3 & 0 \\
T.W & 17/24 & 23/36 & 19/36 & 1/2 & 4/9 & 1/3 & 2/9 \\ 
 \end{array}
\]
\caption{The upper bound $\a(G,H,y) \leqs \b$ in Proposition \ref{p:e6_dim3}}
\label{tab:beta_e6}
\end{table}}

\begin{proof}
If $y \in A_1$ is a long root element then the upper bound on $\a(G,H,y)$ in Table \ref{tab:beta_e6} follows immediately from \cite[Theorem 3.1]{BGG1}. Now assume $y \not\in A_1$. If $H \in \{A_1A_5, F_4, C_4, A_2G_2\}$ then Lawther \cite{Law2} has determined the $G$-class of each $H$-class of unipotent elements and it is an easy exercise to compute $\a(G,H,y)$ in each case.

Next assume $H = A_2^3.S_3$. The $G$-class of each unipotent class in $H^0$ is determined in \cite[Section 4.9]{Law2} and this allows us to compute $\dim(y^G \cap H^0)$. Therefore, to complete the analysis of this case we may assume $p \in \{2,3\}$ and $y \in H \setminus H^0$ has order $p$. If $p=2$ then the proof of \cite[Lemma 4.12]{BTh0} shows that $C_{H^0}(y) = A_2B_1$ and $y$ is contained in the $G$-class labelled $A_1^3$. So for $y \in A_1^3$ we conclude that $\dim(y^G \cap H^0) = 12$ and 
$\dim(y^G \cap (H \setminus H^0)) \leqs 13$, whence $\a(G,H,y) \leqs 1/2$. Similarly, if $p=3$ and $y \in H \setminus H^0$ has order $3$ then $y$ is contained in the $G$-class $A_2^2$, which is not one of the classes we need to consider.

Now suppose $H = D_4T_2.S_3$.  Set $J = (H^0)' = D_4$ and note that $J.S_3 < F_4 < G$. We studied the embedding $J.S_3 < F_4$ in the proof of Proposition \ref{p:f4_dim3} and we note that the $G$-class of each unipotent class in $F_4$ is recorded in  \cite[Table 22.1.4]{LS_book}. In this way, it is straightforward to determine the $G$-class of each unipotent class in $H^0$ and then compute $\dim(y^G \cap H^0)$.
So we may assume $p \in \{2,3\}$ and $y \in H \setminus H^0$ has order $p$.

First assume $p=2$. In the notation of \cite{AS}, we may view $y$ as a $b_1$ or $b_3$ type involution in $D_4.2 = {\rm O}_8(k)$ and we recall that the $b_1$-involutions are contained in the $F_4$-class $\tilde{A}_1$ (which in turn places $y$ in the $G$-class labelled $A_1^2$), while those of type $b_3$ are in the $A_1\tilde{A}_1$ class of $F_4$, which is contained in the $A_1^3$ class of $G$. As a consequence, we deduce that $\dim C_{\O}(y) = 36, 28$ if $y \in A_1, A_1^2$, respectively. And if $y \in A_1^3$ then $\dim(y^G \cap H^0) = 16$ and $\dim(y^G \cap (H \setminus H^0)) \leqs 17$, which implies that $\dim C_{\O}(y) \leqs 25$ and thus $\a(G,H,y) \leqs 25/48$ as recorded in Table \ref{tab:beta_e6}.

Now suppose $p=3$, in which case $y$ acts as a triality graph automorphism on $J = D_4$. By inspecting the proof of Proposition \ref{p:f4_dim3} we see that $y$ is in the $F_4$-class $\tilde{A}_2$ if $C_J(y) = G_2$ (in which case, $y$ is in the $G$-class labelled $A_2^2$), otherwise $y$ is in the $F_4$-class $\tilde{A}_2A_1$, which places $y$ in the $G$-class $A_2^2A_1$. So for $y \in A_2^2A_1$ we get $\dim(y^G \cap H^0) = 0$ and $\dim(y^G \cap (H \setminus H^0)) \leqs 22$, which yields $\a(G,H,y) \leqs 1/3$. 

Finally, if $H = T.W$ is the normaliser of a maximal torus, then the results in Table \ref{tab:beta_e6} (for $y \not\in A_1$) follow from the trivial bound $\dim(y^G \cap H) \leqs \dim H = 6$.
\end{proof}

\begin{thm}\label{t:e6_4}
The conclusion to Theorem \ref{t:main} holds when $G = E_6$ and $t=4$.
\end{thm}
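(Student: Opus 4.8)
The plan is to follow the same scheme used for $G = F_4$ with $t = 4$, combining Corollary \ref{c:fixV}, Proposition \ref{p:fix} and Theorem \ref{t:par}. First I would dispose of the cases where $\Delta$ is empty: by Corollary \ref{c:fixV} this occurs whenever the tuple $(\mathcal{C}_1,\ldots,\mathcal{C}_4)$ appears in Table \ref{tab:main}, and inspecting that table for $G = E_6$ with $t = 4$ shows the only relevant entry is $\mathcal{C}_i = A_1$ for all $i$ (since for $t \geqs 4$ the fixed-space inequality \eqref{e:di} on the $27$-dimensional module $V = W_G(\l_1)$ is very demanding, and $\dim C_V(y) \leqs 27 - \lceil 27/2\rceil$ already fails for anything larger than a long root element appearing four times). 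So it remains to prove that $\Delta$ is \emph{non-empty} for every other choice of four non-central unipotent classes of elements of order $p$.

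For the non-emptiness part, by Proposition \ref{p:fix} (recall $\Delta^+$ is non-empty here, since the exceptional pair in Theorem \ref{t:plus2} does not occur for $E_6$) it suffices to verify $\Sigma_X(H) < 3$ for every $H \in \mathcal{M} = \mathcal{P} \cup \mathcal{R}$. For $H \in \mathcal{P}$ a maximal parabolic, Theorem \ref{t:par} already guarantees this (the alternatives (ii)--(iv) of that theorem either put us back in Table \ref{tab:main}, or concern $F_4$, or concern cases with smaller $t$ recorded in Table \ref{tab:special}, none of which survive once we have excluded the $E_6$ Table \ref{tab:main} entries and restricted to $t=4$). So the work is entirely with the reductive maximal subgroups $H \in \mathcal{R}$. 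Here I would invoke the uniform bound from \cite[Theorem 3.1]{BGG1}: for any $H \in \mathcal{M}$ and any unipotent $y$ of order $p$ one has $\a(G,H,y) \leqs 7/10$ if $y$ is a long root element ($y \in A_1$), and $\a(G,H,y) < 2/3$ otherwise. If at least one $\mathcal{C}_i \ne A_1$, then
\[
\Sigma_X(H) < 3 \cdot \frac{7}{10} + \frac{2}{3} = \frac{21}{10} + \frac{2}{3} < 3,
\]
so the only remaining configuration is $\mathcal{C}_1 = \mathcal{C}_2 = \mathcal{C}_3 = \mathcal{C}_4 = A_1$ — but that is exactly the Table \ref{tab:main} case already excluded. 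Hence $\Sigma_X(H) < 3$ for all $H$, and Proposition \ref{p:fix} gives $\Delta \ne \emptyset$.

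I do not anticipate a genuine obstacle here: the $t = 4$ case is comfortably within reach of the crude bound $\a(G,H,y) \leqs 7/10$ because $3 \cdot \tfrac{7}{10} + \tfrac{2}{3} < 3$ with room to spare, so the sharper estimates of Proposition \ref{p:e6_dim3} (which will be needed for $t = 3$ and $t = 2$) are not required at this stage. The one point needing care is the bookkeeping: checking that Table \ref{tab:main} contains no four-class $E_6$ entry other than $(A_1,A_1,A_1,A_1)$, and confirming that none of the exceptional alternatives in Theorem \ref{t:par} produces a surviving four-class configuration for $E_6$. Both are finite inspections of the tables. I would write the proof as: reduce via Corollary \ref{c:fixV} to showing non-emptiness outside $(A_1,A_1,A_1,A_1)$; apply Theorem \ref{t:par} for parabolic $H$; for reductive $H$ apply the $7/10$ and $2/3$ bounds of \cite[Theorem 3.1]{BGG1} to get $\Sigma_X(H) < 3$; conclude by Proposition \ref{p:fix}.
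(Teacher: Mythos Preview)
Your proposal is essentially correct and follows the same route as the paper, but two small points should be fixed. First, the uniform bound from \cite[Theorem 3.1]{BGG1} for long root elements in $E_6$ is $10/13$ (attained at $H=F_4$), not $7/10$; the arithmetic still works since $3\cdot\tfrac{10}{13}+\tfrac{2}{3}=\tfrac{116}{39}<3$, but you should quote the right constant. Second, the split into parabolic versus reductive $H$ is unnecessary here: \cite[Theorem 3.1]{BGG1} already covers every $H\in\mathcal{M}$, so the paper's proof dispenses with Theorem~\ref{t:par} entirely and handles all maximal subgroups at once with the single estimate $\Sigma_X(H)<3\cdot\tfrac{10}{13}+\tfrac{2}{3}<3$.
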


\begin{proof}
By Corollary \ref{c:fixV} we know that $\Delta$ is empty when $\mathcal{C}_i = A_1$ for all $i$. In the remaining cases, it suffices to show that $\Sigma_X(H)<3$ for all $H \in \mathcal{M}$. By \cite[Theorem 3.1]{BGG1} we have $\a(G,H,y) \leqs 10/13$ if $y \in A_1$, otherwise $\a(G,H,y)<2/3$, whence $\Sigma_X(H) < 3\cdot 10/13+2/3$ and the result follows.
\end{proof}

\begin{thm}\label{t:e6_3}
The conclusion to Theorem \ref{t:main} holds when $G = E_6$ and $t=3$.
\end{thm}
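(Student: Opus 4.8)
The plan is to follow the strategy used for $F_4$ in the proof of Theorem~\ref{t:f4_3}. Write $\mathcal{C}_i = y_i^G$ and recall from Theorem~\ref{t:plus2} that $\Delta^+$ is non-empty when $G = E_6$, so Proposition~\ref{p:fix} applies: to prove $\Delta$ is non-empty it suffices to verify $\Sigma_X(H) < 2$ for all $H \in \mathcal{M}$. The argument therefore breaks into three parts. First, if $(\mathcal{C}_1,\mathcal{C}_2,\mathcal{C}_3)$ occurs in Table~\ref{tab:main} then $\Delta$ is empty by Corollary~\ref{c:fixV}. Secondly, if it occurs in Table~\ref{tab:special} (with $G = E_6$ and $t = 3$) then I must show directly that $\Delta$ is empty; this is the heart of the proof. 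Thirdly, for every remaining triple I must check that $\Sigma_X(H) < 2$ for all $H \in \mathcal{M}$.

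For the second part the plan is to apply Proposition~\ref{p:fibre}. For each relevant triple I would select a closed connected subgroup $L$ of $G$ with $M = N_G(L)$ connected and embed class representatives $y_1,y_2,y_3$ in $L$. Guided by Lawther's fusion results in \cite{Law2}, the natural candidates for $L$ are subsystem subgroups of classical type, such as a factor of the maximal-rank subgroup $A_1A_5$, a Levi subgroup of type $A_5$ or $D_5$, or a product of classical groups. Using \cite{Law2} I would determine the $M$-classes $\mathcal{D}_i = y_i^M$ and the Jordan forms of the $y_i$ on the natural module $W$ of $L$, then check that $\sum_i \dim C_W(y_i)$ is small enough for Gerhardt's theorem \cite{Ger} (for linear factors), or \cite[Theorem 7]{BGG2} (for symplectic and orthogonal factors), to guarantee that a generic tuple in $Y = \mathcal{D}_1 \times \mathcal{D}_2 \times \mathcal{D}_3$ topologically generates a subgroup with connected component $L$; this yields condition~(ii) of Proposition~\ref{p:fibre}. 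Finally, using the class dimensions recorded in \cite[Table 22.1.3]{LS_book}, I would verify the dimension identity $\dim Y + \dim G - \dim X = \dim M$ required in condition~(iii). Proposition~\ref{p:fibre} then gives that $\Delta$ is empty.

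For the third part the plan is to use Proposition~\ref{p:clos} and Spaltenstein's closure diagrams in \cite[Section IV.2]{Spal} to reduce to a short list of minimal triples $(\mathcal{C}_1,\mathcal{C}_2,\mathcal{C}_3)$ not contained in the closure of any tuple from Table~\ref{tab:clos}, and then to bound $\Sigma_X(H)$ for each $H \in \mathcal{M}$. For parabolic $H \in \mathcal{P}$ the bound $\Sigma_X(H) < 2$ holds by Theorem~\ref{t:par}. For $H \in \mathcal{R} \setminus \mathcal{L}$, Theorem~\ref{t:long} gives $\a(G,H,y) = 0$ for long root elements $y$, while \cite[Theorem 3.1]{BGG1} gives $\a(G,H,y) < 2/3$ for $y \notin A_1$, so $\Sigma_X(H) < 2$. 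For $H \in \mathcal{L}$ the refined bounds of Proposition~\ref{p:e6_dim3} (Table~\ref{tab:beta_e6}), occasionally supplemented by values of $\a(G,H,y)$ extracted from the proof of that proposition, give $\Sigma_X(H) < 2$; here it is natural to split into the cases $\mathcal{C}_1 = \mathcal{C}_2 = A_1$ (where closures reduce $\mathcal{C}_3$ to a handful of classes just above $A_2$, such as $A_2A_1$, $A_2^2A_1$ and $A_4A_1$), $\mathcal{C}_1 = A_1 \neq \mathcal{C}_2$ (reducing to a short list of pairs $(\mathcal{C}_2,\mathcal{C}_3)$), and $\mathcal{C}_i \neq A_1$ for all $i$ (where \cite[Theorem 3.1]{BGG1} already forces $\a(G,H,y_i) < 2/3$, hence $\Sigma_X(H) < 2$). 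An application of Proposition~\ref{p:fix} then shows that $\Delta$ is non-empty.

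I expect the main obstacle to be the second part: for each triple in Table~\ref{tab:special} one must identify the subgroup $L$ correctly and verify all three hypotheses of Proposition~\ref{p:fibre}. Condition~(ii) is the delicate point, since forcing a generic tuple in $Y$ to generate $L$ requires the Jordan data of the $y_i$ on $W$ to avoid the exceptional configurations in \cite{BGG2, Ger}, which in turn severely restricts the admissible choices of $L$. A secondary difficulty in the third part is confirming that the list of minimal triples is complete and that Spaltenstein's closure order is correctly translated into the notation of \cite{LS_book} (cf.\ Remark~\ref{r:spal}).
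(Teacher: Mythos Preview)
Your proposal is essentially correct and follows the same strategy as the paper. However, your ``second part'' is unnecessary: inspecting Table~\ref{tab:special}, the only entry for $G = E_6$ is the pair $(A_1^2, A_2^2)$ with $t=2$, so there are \emph{no} triples to handle via Proposition~\ref{p:fibre} when $t=3$. The paper's proof is therefore shorter than you anticipate: it consists only of your first and third parts, and Proposition~\ref{p:fibre} is never invoked.

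For the third part your outline matches the paper exactly, though the paper is a bit more economical with the closure reductions. When $\mathcal{C}_1 = \mathcal{C}_2 = A_1$, the unique minimal class not appearing in Table~\ref{tab:main} is $A_2A_1$, so it suffices to check only $\mathcal{C}_3 = A_2A_1$ (your list $A_2A_1$, $A_2^2A_1$, $A_4A_1$ is more than needed). When $\mathcal{C}_1 = A_1$ and $\mathcal{C}_2 = A_1^2$, the paper reduces to $\mathcal{C}_3 = A_1^3$; when $\mathcal{C}_1 = A_1$ and $\mathcal{C}_2 \notin \{A_1, A_1^2\}$, it reduces to $\mathcal{C}_2 = \mathcal{C}_3 = A_1^3$. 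In each case the bounds in Table~\ref{tab:beta_e6} suffice directly, without needing to extract additional values from the proof of Proposition~\ref{p:e6_dim3}.
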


\begin{proof}
In view of Corollary \ref{c:fixV}, we see that $\Delta$ is empty for each triple in Table \ref{tab:main}. Therefore, in the remaining cases it suffices to show that $\Sigma_X(H)<2$ for all $H \in \mathcal{M}$. By Theorem \ref{t:par}, we only need to check this for $H \in \mathcal{R}$. Set $\mathcal{C}_i = y_i^G$. 

If $\mathcal{C}_i \ne A_1$ for all $i$, then \cite[Theorem 3.1]{BGG1} implies that $\a(G,H,y_i)<2/3$ and thus $\Sigma_X(H) <2$.  Therefore, for the remainder of the proof we may assume $\mathcal{C}_1 = A_1$ and $H \in \mathcal{L}$. 

First assume $\mathcal{C}_2 = A_1$. By inspecting \cite{Spal}, we see that the class labelled $A_2A_1$ is contained in the closure of $\mathcal{C}_3$, so in view of Proposition \ref{p:clos} we may assume $\mathcal{C}_3 = A_2A_1$. One can now check that the bounds in Proposition \ref{p:e6_dim3} imply that $\Sigma_X(H)<2$. Finally, if $\mathcal{C}_2 \ne A_1$ then by considering closures we may assume $\mathcal{C}_2 = A_1^2$ and $\mathcal{C}_3 = A_1^3$. As before, the result now follows by applying the bounds presented in Proposition \ref{p:e6_dim3}.
\end{proof}

Finally, let us assume $t=2$. First we handle the special case from Table \ref{tab:special}.

\begin{lem}\label{l:e6_3}
The set $\Delta$ is empty if $t=2$ and $(\mathcal{C}_1,\mathcal{C}_2) = (A_1^2,A_2^2)$. 
\end{lem}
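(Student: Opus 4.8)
The plan is to apply Proposition~\ref{p:fibre}, mirroring the strategy used in Lemmas~\ref{l:f4_30}--\ref{l:f4_31}. Writing $\mathcal{C}_i = y_i^G$, the first step is to locate a suitable closed connected subgroup $L$ of $G$ into which both $y_1 \in A_1^2$ and $y_2 \in A_2^2$ embed with good Jordan data. The natural candidate is $L = A_5$, sitting inside the maximal rank subgroup $M = N_G(L) = A_1A_5$: here $y_1$ should be realised as a unipotent element of $L = {\rm SL}_6$ with Jordan form $(J_2^2,J_1^2)$ on the natural $6$-dimensional module $W$, and $y_2$ as an element with Jordan form $(J_3^2)$ on $W$. (One must check via \cite{Law2} -- specifically the fusion of unipotent $A_1A_5$-classes in $E_6$ -- that these $L$-classes indeed fuse into the $G$-classes $A_1^2$ and $A_2^2$; this is the kind of bookkeeping the paper carries out routinely for the analogous $F_4$ cases.)

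Next I would invoke \cite[Theorem~7]{BGG2} for the group $L = A_5 = {\rm SL}_6$ to reduce to the case $G(x) = L$ for some $x \in X$: the hypothesis needed is that $\sum_{i=1}^2 \dim C_W(y_i) < \dim W$, and indeed $\dim C_W(y_1) + \dim C_W(y_2) = 4 + 2 = 6 = \dim W$ --- wait, this is not strictly less than $\dim W$, so one would instead need the version of the result in \cite{BGG2} that also permits equality together with the exclusion of the quadratic-minimal-polynomial exception, or else choose a different embedding. A cleaner option, if the numerics are tight, is to note that $y_1$ does not have a quadratic minimal polynomial on $W$ (it has Jordan type $(J_2^2,J_1^2)$, minimal polynomial $(X-1)^2$ --- actually this \emph{is} quadratic), so one must check that $y_2$ fails to have a quadratic minimal polynomial: its type is $(J_3^2)$, with minimal polynomial $(X-1)^3$, which is cubic, so the exceptional configuration of \cite[Theorem~7]{BGG2} (or of Gerhardt's linear result) does not arise and we may assume $G(x)^0 = L$.

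With that in hand, the dimension count is the routine final step: set $\mathcal{D}_i = y_i^M$ where $M = A_1A_5$, compute $\dim \mathcal{D}_1$ and $\dim \mathcal{D}_2$ (these are the dimensions of the $M$-classes of elements with the prescribed Jordan forms, contained in the $A_5$ factor, so they coincide with the corresponding ${\rm SL}_6$-class dimensions: $\dim \mathcal{D}_1 = 36 - (6 + 4) = $ something explicit, and $\dim \mathcal{D}_2 = 36 - 12$ similarly), and verify condition~(iii) of Proposition~\ref{p:fibre}, namely $\dim \mathcal{D}_1 + \dim \mathcal{D}_2 + \dim G - \dim X = \dim M$. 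Since $\dim X = \dim \mathcal{C}_1 + \dim \mathcal{C}_2$ and $\dim G - \dim M = (\dim G - \dim M)$, this reduces to the identity $\dim \mathcal{C}_i - \dim \mathcal{D}_i = \dim G - \dim M$ summed appropriately, which is exactly the kind of equality that holds for maximal rank subgroups when the centraliser structures line up. Condition~(i) -- that $M = N_G(L)$ is connected and contains the $y_i$ -- is immediate for this maximal rank subgroup, and condition~(ii) is what \cite[Theorem~7]{BGG2} gives us.

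The main obstacle I anticipate is the tightness of the fixed-space inequality for the application of \cite[Theorem~7]{BGG2}: with $\sum \dim C_W(y_i) = \dim W$ exactly, one is on the boundary, and one must either appeal to a sharper form of that theorem (handling equality, with the quadratic-polynomial exception explicitly excluded by the presence of $y_2$ with a cubic minimal polynomial) or else find an alternative subgroup $L$ -- for instance a $D_5$ or a different copy of $A_5$ -- for which the strict inequality holds. Verifying the precise fusion of the chosen $L$-classes into the $G$-classes $A_1^2$ and $A_2^2$ via Lawther's tables is the other piece requiring care, but it is mechanical given \cite{Law2}. Once $L$ and the embeddings are pinned down, the dimension identity in condition~(iii) should fall out of the standard centraliser computations, and Proposition~\ref{p:fibre} then yields that $\Delta$ is empty.
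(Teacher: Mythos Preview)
Your approach is essentially the one the paper takes: embed $y_1,y_2$ in $L=A_5$ inside $M=N_G(L)=A_1A_5$ with Jordan forms $(J_2^2,J_1^2)$ and $(J_3^2)$ on the natural $6$-dimensional module, then appeal to Proposition~\ref{p:fibre}. However, your citation is off and this is the source of your hesitation. The result you need is the main theorem of \cite{Ger} for ${\rm SL}(V)$, not \cite[Theorem~7]{BGG2}, which treats symplectic and orthogonal groups (see Remark~\ref{r:natural}). Gerhardt's theorem is an if-and-only-if: for $L={\rm SL}_6$ the set $\Delta$ (for $L$) is empty precisely when \eqref{e:di} holds or when $t=2$ and \emph{both} $y_i$ have quadratic minimal polynomial on $W$. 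Here $\sum_i\dim C_W(y_i)=6=(t-1)\dim W$, so \eqref{e:di} fails, and $y_2$ has cubic minimal polynomial, so the second alternative fails too. Hence there is no boundary problem at all: Gerhardt's theorem directly gives an $x$ with $G(x)=L$, exactly as the paper records.

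The dimension count you leave unfinished goes through cleanly: in $A_5$ one has $\dim\mathcal{D}_1=16$ and $\dim\mathcal{D}_2=24$, so $\dim Y=40$; since $\dim\mathcal{C}_1=32$ and $\dim\mathcal{C}_2=48$ we get $\dim X=80$, and then $\dim Y+\dim G-\dim X=40+78-80=38=\dim M$, verifying condition~(iii). Your suggested fallback of trying a different subgroup $L$ is unnecessary.
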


\begin{proof}
Write $\mathcal{C}_i = y_i^G$ and observe that we may embed $y_1$ and $y_2$ in a subgroup $L = A_5$ with $M = N_G(L) = A_1A_5$ so that the respective Jordan forms on the natural module for $L$ are $(J_2^2,J_1^2)$ and $(J_3^2)$. Then by applying the main theorem of \cite{Ger}, we may assume $G(y) = L$ for $y = (y_1,y_2) \in X$. Setting $\mathcal{D}_i = y_i^M$ and $Y = \mathcal{D}_1 \times \mathcal{D}_2$ we compute $\dim Y = 40$ and thus 
\[
\dim X - \dim Y = 40 = \dim G - \dim M.
\]
We now conclude via Proposition \ref{p:fibre}.
\end{proof}

\begin{thm}\label{t:e6_2}
The conclusion to Theorem \ref{t:main} holds when $G = E_6$, $t=2$ and $p \geqs 3$.
\end{thm}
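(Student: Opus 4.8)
The plan is to follow the scheme used for Theorems \ref{t:e6_4} and \ref{t:e6_3}, strengthened to account for the sharper inequality needed when $t=2$. Write $\mathcal{C}_i = y_i^G$. By Corollary \ref{c:fixV} the set $\Delta$ is empty for every pair in Table \ref{tab:main}, and by Lemma \ref{l:e6_3} the same holds for the $E_6$, $t=2$ entry $(A_1^2,A_2^2)$ of Table \ref{tab:special}, so it remains to show that $\Delta$ is non-empty for every pair $(\mathcal{C}_1,\mathcal{C}_2)$ of order-$p$ unipotent classes not appearing in Tables \ref{tab:main} or \ref{tab:special} (up to reordering). Since $(G,p)\neq(G_2,3),(F_4,2)$, Theorem \ref{t:plus2} gives that $\Delta^+$ is non-empty, so by Proposition \ref{p:fix} it suffices to establish $\Sigma_X(H)<1$ for all $H\in\mathcal{M}=\mathcal{P}\cup\mathcal{R}$. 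For $H\in\mathcal{P}$ a maximal parabolic subgroup this follows from Theorem \ref{t:par}: the configurations excluded in parts (i) and (iv) there are precisely the pairs in Tables \ref{tab:main} and \ref{tab:special}, and part (ii) does not occur since $G=E_6$ with $p\geq 3$. So the substance of the proof is the verification of $\Sigma_X(H)<1$ for $H\in\mathcal{R}$.

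For this I would first cut down the list of pairs using Proposition \ref{p:clos} together with upper semicontinuity of $\dim C_\Omega(y)$: if $\mathcal{C}_i\preccurlyeq\mathcal{C}_i'$ then $\alpha(G,H,y_i)\geq\alpha(G,H,y_i')$, so it is enough to treat the finitely many closure-maximal pairs among those outside Tables \ref{tab:main} and \ref{tab:special}. Since Table \ref{tab:main} absorbs all pairs in which both classes are small, each such pair has at least one ``large'' class, and by passing down the closure order one may bound $\alpha(G,H,y_i)$ for that class by the value of $\b$ attached in Proposition \ref{p:e6_dim3} to one of $A_1,A_1^2,A_1^3,A_2,A_2A_1,A_2^2A_1,A_4A_1$. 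If $H\in\mathcal{R}\setminus\mathcal{L}$ --- so $H=G_2$ ($p\neq 7$) or $H=A_2.2$ ($p\geq 5$) --- then $H$ contains no long root elements (Theorem \ref{t:long}) and the crude bound $\dim(y^G\cap H)\leq\dim H$ in Proposition \ref{p:dim} already forces $\alpha(G,H,y_i)$ to be far below $1/2$, so $\Sigma_X(H)<1$ trivially (indeed for $p$ large these subgroups contain no order-$p$ elements outside a maximal torus). If $H\in\mathcal{L}$, I would organise the remaining cases according to whether $\mathcal{C}_1$ equals $A_1$, equals $A_1^2$, or lies outside $\{A_1,A_1^2,A_1^3,A_2\}$; in each case the other class reduces, via closures, to a class covered by Proposition \ref{p:e6_dim3}, and $\Sigma_X(H)<1$ is then read off from Table \ref{tab:beta_e6}.

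The main obstacle is the book-keeping in this last step. Unlike the $t\geq 3$ situation --- where the uniform estimate $\alpha(G,H,y)<2/3$ of \cite[Theorem 3.1]{BGG1} already yields $\Sigma_X(H)<t-1$ --- here $2\cdot\frac{2}{3}>1$, so one genuinely needs the sharp bounds of Proposition \ref{p:e6_dim3} and must control $\alpha(G,H,y_1)$ and $\alpha(G,H,y_2)$ simultaneously. The delicate point is therefore to pin down exactly which pairs $(\mathcal{C}_1,\mathcal{C}_2)$ avoid both Tables \ref{tab:main} and \ref{tab:special} (this requires combining the failure of \eqref{e:di} for the $27$-dimensional module with Spaltenstein's closure order on unipotent classes in $E_6$ from \cite{Spal}), and then to check for each such pair and each $H\in\mathcal{R}$ that $\alpha(G,H,y_1)+\alpha(G,H,y_2)<1$; the tightest instances --- where some $\alpha(G,H,y_i)$ is near $1/2$, e.g.\ $\mathcal{C}_i=A_1^3$ with $H\in\{A_1A_5,A_2^3.S_3\}$ or $\mathcal{C}_i=A_1$ with $H\in\{F_4,T.W\}$ --- may need $\dim(y^G\cap H)$ to be recomputed directly from \cite{Law2} rather than estimated.
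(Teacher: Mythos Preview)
Your overall plan matches the paper's proof: reduce to $\Sigma_X(H)<1$ for $H\in\mathcal{R}$ via Corollary \ref{c:fixV}, Lemma \ref{l:e6_3}, Proposition \ref{p:fix} and Theorem \ref{t:par}; then organise by the class $\mathcal{C}_1$, reduce $\mathcal{C}_2$ along the closure order to one of the classes in Proposition \ref{p:e6_dim3}, and read off the bound from Table \ref{tab:beta_e6}. The paper's case split is $\mathcal{C}_1=A_1$ (reduce $\mathcal{C}_2$ to $A_4A_1$), $\mathcal{C}_1=A_1^2$ (reduce $\mathcal{C}_2$ to $A_2^2A_1$), $\mathcal{C}_1\in\{A_1^3,A_2\}$ (reduce $\mathcal{C}_2$ to $A_2A_1$), and finally $\mathcal{C}_1=\mathcal{C}_2=A_2A_1$; your split omits the $\{A_1^3,A_2\}$ case explicitly, but this is harmless since, up to swapping, either $\mathcal{C}_2$ lies outside $\{A_1,A_1^2,A_1^3,A_2\}$ or the pair is already in Table \ref{tab:main}.

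There is, however, a genuine gap in your treatment of $H\in\mathcal{R}\setminus\mathcal{L}=\{G_2,A_2.2\}$. The crude bound $\dim(y^G\cap H)\leqs\dim H$ does \emph{not} force $\a(G,H,y)$ far below $1/2$. For $H=G_2$ one has $\dim\O=64$ and $\dim H=14$, so for $y\in A_1^2$ (with $\dim y^G=32$) the crude bound gives only $\a\leqs 46/64$, and for $y\in A_2A_1$ (with $\dim y^G=46$) it gives $\a\leqs 1/2$; pairing these (e.g.\ $(\mathcal{C}_1,\mathcal{C}_2)=(A_1^2,A_2^2A_1)$) the crude estimate for $\Sigma_X(H)$ exceeds $1$. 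The paper does not use crude bounds here at all: instead it observes, by inspecting \cite{Law2}, that for each reduced pair at least one of the two classes has empty intersection with $H$, so the corresponding $\a$-value vanishes and $\Sigma_X(H)=\a(G,H,y_j)<1$ for the other class. You should replace your crude-bound step by this vanishing argument.
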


\begin{proof}
Write $\mathcal{C}_i = y_i^G$. By combining Corollary \ref{c:fixV} and Lemma \ref{l:e6_3}, we have already shown that $\Delta$ is empty for all of the pairs $(\mathcal{C}_1,\mathcal{C}_2)$ recorded in Tables \ref{tab:main} and \ref{tab:special}. So in view of Theorem \ref{t:par}, it suffices to verify the bound $\Sigma_X(H)<1$ in each of the remaining cases, for every subgroup $H \in \mathcal{R}$. 

First assume $\mathcal{C}_1 = A_1$. If $H \not\in \mathcal{L}$ then $\Sigma_X(H) = \a(G,H,y_2)$ and the result follows, so we may as well assume $H \in \mathcal{L}$. In addition, by  considering the closure relation on unipotent classes, we only need to check that $\Sigma_X(H)<1$ when $\mathcal{C}_2$ is the class labelled $A_4A_1$. The result now follows by applying the relevant upper bounds in Proposition \ref{p:e6_dim3}. 

Next suppose $\mathcal{C}_1 = A_1^2$. Here we may assume $\mathcal{C}_2 = A_2^2A_1$ and by inspecting \cite{Law2} we see that $H$ meets $\mathcal{C}_1$ only if $H \in \mathcal{L}$. So we are free to assume that $H \in \mathcal{L}$ and we can now verify the inequality $\Sigma_X(H)<1$ from the bounds in Proposition \ref{p:e6_dim3}. Similarly, if $\mathcal{C}_1 = A_1^3$ or $A_2$ then we may assume $\mathcal{C}_2 = A_2A_1$ and $H \in \mathcal{L}$, noting once again that the desired result follows via Proposition \ref{p:e6_dim3} (note that if $H \not\in \mathcal{L}$, then either $\mathcal{C}_1$ or $\mathcal{C}_2$ does not meet $H$ and the bound $\Sigma_X(H)<1$ clearly holds).

Finally, let us assume $\mathcal{C}_i \not\in \{A_1,A_1^2,A_1^3,A_2\}$ for $i=1,2$. Here we may assume $\mathcal{C}_1 = \mathcal{C}_2 = A_2A_1$ and $H \in \mathcal{L}$, in which case the result follows in the usual fashion via Proposition \ref{p:e6_dim3}.
\end{proof}

This completes the proof of Theorem \ref{t:main} for $G = E_6$.

\subsection{$G = E_7$}\label{ss:e7}

In this section we prove Theorem \ref{t:main} for $G = E_7$. The unipotent classes in $G$ and their respective dimensions are recorded in \cite[Table 22.1.2]{LS_book} and we adopt the labelling of classes therein. We remark that there are several differences between our choice of labels and those used by other authors. For example, the classes $(A_1^3)^{(1)}$, $(A_3A_1)^{(1)}$ and $(A_5)^{(1)}$ are respectively labelled $(3A_1)''$, $(A_3+A_1)''$ and $(A_5)''$ in \cite{Law2,Law1,Spal}. We define the subgroup collections $\mathcal{M}$, $\mathcal{P}$, $\mathcal{R}$ and $\mathcal{L}$ as in Section \ref{ss:sub}. 

We begin with the following result on fixed point spaces.

\begin{prop}\label{p:e7_dim3}
Let $H \in \mathcal{L}$ and let $y \in G$ be an element of order $p$ in one of the following conjugacy classes
\[
A_1, \, A_1^2, \, (A_1^3)^{(1)}, \, (A_1^3)^{(2)}, \, A_1^4, \, A_2, \, A_2A_1^2, \, A_2^2A_1, \, A_4A_2.
\]
Then $\a(G,H,y) \leqs \b$, where $\b$ is recorded in Table \ref{tab:beta_e7}.
\end{prop}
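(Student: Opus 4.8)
\textbf{Proof plan for Proposition \ref{p:e7_dim3}.}

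The plan is to follow exactly the template established in the proofs of Propositions \ref{p:g2_dim3}, \ref{p:f4_dim3} and \ref{p:e6_dim3}: for each $H \in \mathcal{L}$ (listed in Table \ref{tab:long} for $G = E_7$, namely $A_1D_6$, $A_1F_4$, $G_2C_3$, $E_6T_1.2$, $A_7.2$, $A_2A_5.2$, $A_1^3D_4.S_3$, $(2^2 \times D_4).S_3$ with $p \geqs 3$, $A_1^7.{\rm L}_3(2)$, and $T.W$) and each of the nine classes $\mathcal{C} = y^G$ in the statement, I would bound $\a(G,H,y) = \dim C_{\O}(y)/\dim \O$ using Proposition \ref{p:dim}, i.e. $\dim C_{\O}(y) = \dim \O - \dim y^G + \dim(y^G \cap H)$. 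The dimension $\dim \O = \dim G - \dim H$ and $\dim y^G$ come directly from \cite[Table 22.1.2]{LS_book}, so the entire content is the computation of $\dim(y^G \cap H) = \max_i \dim y_i^H$ over the $H$-classes $y_i^H$ fusing into $y^G$. For $y \in A_1$ (a long root element) every entry follows immediately from \cite[Theorem 3.1]{BGG1}, as in the earlier propositions, so I would dispose of that row first and then assume $y \not\in A_1$.

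First I would handle the subgroups $H$ that are connected, or where we only need the connected part: when $H^0$ is a maximal closed connected reductive subgroup — $A_1D_6$, $A_1F_4$, $G_2C_3$, $E_6T_1$, $A_7$, $A_2A_5$, $A_1^3D_4$, $A_1^7$ — Lawther \cite{Law2} has already determined the $G$-class of every unipotent $H^0$-class, so computing $\dim(y^G \cap H^0)$ is a bookkeeping exercise against the relevant tables of \cite{Law2} (Tables for $E_7$ reductive subgroups), and one then reads off $\a(G,H,y)$. For the iterated cases ($D_4 < F_4$, and the various subsystem embeddings) I would reuse the fusion information worked out in the proofs of Propositions \ref{p:f4_dim3} and \ref{p:e6_dim3}, exactly as the $E_6$ proof reused the $F_4$ computation. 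The case $H = T.W$ (normaliser of a maximal torus) is trivial via $\dim(y^G \cap H) \leqs \dim H = 7$.

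The genuinely delicate step, and the one I expect to be the main obstacle, is the disconnected subgroups where elements of order $p$ can lie outside $H^0$: concretely $A_7.2$, $E_6T_1.2$, $A_2A_5.2$ and $A_1^3D_4.S_3$ for $p=2$ (graph/triality automorphisms of order $2$), $A_1^3D_4.S_3$ for $p=3$ (triality), and $A_1^7.{\rm L}_3(2)$ (where ${\rm L}_3(2)$ has order divisible by $2$ and $7$), plus $(2^2\times D_4).S_3$ which is only present for $p \geqs 3$ so contributes an order-$3$ triality element. For each such $H$ and each prime $p$ dividing $|H:H^0|$, I would take a representative $y \in H \setminus H^0$ of order $p$, use Proposition \ref{p:graph} to pin down $C_{H^0}(y)$ (hence $\dim(y^G \cap (H\setminus H^0)) \leqs \dim H - \dim C_{H^0}(y)$ after accounting for the finite part), and then identify the $G$-class of $y$ by restricting a convenient $kG$-module — most naturally $V = W_G(\l_7)$ of dimension $56$, or the adjoint module $\mathcal{L}(G)$ of dimension $133$ — to $H^0$, computing the Jordan form of $y$ from how the outer element permutes the summands (as in the $F_4$ and $E_6$ arguments with the transpose map on $3\times 3$ matrices and with triality cyclically permuting the three $8$-dimensional $D_4$-modules), and matching against \cite[Table 2]{Law1} or \cite[Table 7]{Law1}. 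Where the module-restriction argument leaves an ambiguity between two $G$-classes (as happened with $\tilde{A}_2$ versus $\tilde{A}_2A_1$ in $F_4$), I expect to need the centraliser-structure refinement used in the proofs of \cite[Propositions 5.12 and 5.14]{BGS} and \cite[Lemma 4.12]{BTh0}; several of these $E_7$-specific identifications are in fact already carried out in \cite{BGS,BTh0} and I would cite them directly. Once $\dim(y^G \cap H)$ is known in every case, each entry of Table \ref{tab:beta_e7} is a one-line arithmetic check, and I would only spell out one or two representative computations, relegating the rest to "a routine exercise" as in the preceding propositions.
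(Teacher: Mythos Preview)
Your template is exactly right and matches the paper's proof for almost every subgroup: long root elements via \cite[Theorem 3.1]{BGG1}, the connected $H$ via Lawther's fusion tables in \cite{Law2}, and the disconnected cases via Proposition \ref{p:graph} together with the restriction of $W_G(\l_7)$ (and occasionally the adjoint module) to $H^0$. The paper also cites \cite{BGS} and \cite{BGG1} for some of the outer-coset class identifications, just as you anticipate.

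There is, however, one genuine gap. Your claim that $H=T.W$ is ``trivial via $\dim(y^G\cap H)\leqs 7$'' does not deliver the entry $\delta_{2,p}/2$ for $y\in A_1^4$: with $\dim\O=126$ and $\dim y^G=70$ the trivial bound only gives $\a(G,H,y)\leqs 1/2$, whereas for $p\geqs 3$ the table records $\a(G,H,y)=0$. The same issue arises for $H=A_1^7.{\rm L}_3(2)$: the crude bound $\dim(y^G\cap H)\leqs 14$ yields only $\a(G,H,y)\leqs 1/2$ for $y\in A_1^4$, not the required $27/56$ when $p\geqs 3$. These sharper values are not cosmetic: they are exactly what is needed in the proof of Theorem \ref{t:e7_2} when $\mathcal{C}_1=\mathcal{C}_2=A_1^4$, since one must show $\Sigma_X(H)<1$, and $2\cdot\tfrac{1}{2}$ fails. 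The paper supplies the missing argument: for $T.W$ one checks (via Jordan block counts on $\mathcal{L}(G)$) that no element of order $p\in\{3,5,7\}$ in $H\setminus H^0$ lies in the class $A_1^4$, so $y^G\cap H$ is empty; for $A_1^7.{\rm L}_3(2)$ one first shows $y^G\cap H=y^G\cap H^0$ for $p\geqs 3$ (again by Jordan form considerations for the outer elements of order $3$ and $7$), and then embeds $H^0=A_1\times D_2^3<A_1D_6$ to verify $\dim(y^G\cap H^0)\leqs 12$. You should add this refinement for the $A_1^4$ column.

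A minor point: you list $(2^2\times D_4).S_3$ among the subgroups to treat, but the paper's Table \ref{tab:beta_e7} omits it; that subgroup is dealt with ad hoc inside the proof of Theorem \ref{t:e7_2} rather than here.
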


{\small
\begin{table}
\[
\begin{array}{l|ccccccccc}
& A_1 & A_1^2 & (A_1^3)^{(1)} & (A_1^3)^{(2)} & A_2 & A_1^4 & A_2A_1^2 & A_2^2A_1 & A_4A_2 \\ \hline
A_1D_6 & 3/4 & 5/8 & 5/8 & 1/2 & 1/2 & (15+\delta_{2,p})/32 & 3/8 & 5/16 & 0 \\
A_1F_4 & 10/13 & 8/13 & 7/13 & 7/13 & 7/13 & 19/39 & 5/13 & 1/3 & 0 \\
G_2C_3 & 5/7 & 29/49 & 4/7 & 25/49 & 3/7 & 24/49 & 18/49 & 16/49 & 0 \\
A_7.2 & 5/7 & 3/5 & 43/70 & 19/35 & 18/35 & \delta_{2,p}/2 & 13/35 & 11/35 &1/5 \\
A_2A_5.2 & 11/15 & 3/5 & 3/5 & 23/45 & 7/15 & (14+\delta_{2,p})/30 & 17/45 & 1/3 & 1/5 \\
T_1E_6.2 & 7/9 & 17/27 & 1/2 & 5/9 & 5/9 & \delta_{2,p}/2 & 11/27 & 1/3 & 0 \\
A_1^3D_4.S_3 & 3/4 & 7/12 & 31/48 & 25/48 & 1/2 & (11+\delta_{2,p})/24 & 3/8 & 1/3 & 0 \\
A_1^7.{\rm GL}_3(2) & 5/7 & 37/56 & 9/14 & 31/56 & 15/28 & (27+\delta_{2,p})/56 & 11/28 & 9/28 & 5/28 \\
T.W & 31/42 & 9/14 & 79/126 & 23/42 & 67/126 & \delta_{2,p}/2 & 17/42 & 43/126 & 3/14 \\ 
\end{array}
\]
\caption{The upper bound $\a(G,H,y) \leqs \b$ in Proposition \ref{p:e7_dim3}}
\label{tab:beta_e7}
\end{table}}

\begin{proof}
If $y \in A_1$ is a long root element then the upper bound on $\a(G,H,y)$ in Table \ref{tab:beta_e7} follows from \cite[Theorem 3.1]{BGG1}, so for the remainder we may assume $y \not\in A_1$. Now if $H$ is one of the subgroups $A_1D_6, A_1F_4$ or $G_2C_3$, then the $G$-class of each $H$-class of unipotent elements is determined in \cite{Law2} and it is straightforward to verify the result via Proposition \ref{p:dim}.

The subgroups $A_7.2$ and $A_2A_5.2$ can be handled in a similar fashion, with some additional work when $p=2$. Indeed, by inspecting \cite[Sections 4.11, 4.12]{Law2} we can compute $\dim(y^G \cap H^0)$, so the analysis is reduced to the situation where $p=2$ and $y \in H \setminus H^0$ is an involution. 

First assume $H = A_7.2$. Here $y$ induces a graph automorphism on $H^0 = A_7$ and there are two $H^0$-classes of such elements, represented by $y_1$ and $y_2$, where 
$C_{H^0}(y_1) = C_4$ and $C_{H^0}(y_2) = C_{C_4}(u)$ with $u \in C_4$ a long root element (see Proposition \ref{p:graph}). As explained in the proof of \cite[Lemma 3.18]{BGS}, we find that $y_1$ is contained in the $G$-class $(A_1^3)^{(1)}$, while $y_2$ is in the class labelled $A_1^4$. Hence for $y \in (A_1^3)^{(1)}$ we deduce that 
$\dim(y^G \cap H^0) = 0$ and $\dim (y^G \cap (H \setminus H^0)) \leqs 27$, which implies that $\a(G,H,y) \leqs 43/70$. Similarly, for $y \in A_1^4$ we get $\a(G,H,y) \leqs 1/2$ if $p=2$, otherwise $\a(G,H,y) = 0$. 

Now assume $H = A_2A_5.2$, $p=2$ and $y \in H \setminus H^0$ is an involution. Here $y$ induces a graph automorphism on both simple factors of $H^0$ and we deduce that there are two $H^0$-classes to consider, represented by $y_1$ and $y_2$, where $C_{H^0}(y_1) = B_1C_3$ and $C_{H^0}(y_2) = B_1C_{C_3}(u)$ with $u \in C_3$ a long root element. As explained in the proof of \cite[Lemma 3.12]{BGG1}, the element $y_2$ is contained in the $G$-class $A_1^4$ and we claim that $y_1$ is in the class $(A_1^3)^{(2)}$. To see this, let $V$ be the $56$-dimensional Weyl module $W_G(\l_7)$  and observe that
\[
V\downarrow H^0 = (U_1 \otimes U_2) \oplus (U_1^* \otimes U_2^*) \oplus (0 \otimes \Lambda^3(U_2)),
\]
where $U_1$ and $U_2$ denote the natural modules for $A_2$ and $A_5$, respectively, and $0$ is the trivial module for $A_2$ (see \cite[Table 4]{Thomas}, for example). Now $y_1$ interchanges the first two summands and it has Jordan form $(J_2^6,J_1^8)$ on the final summand. Therefore, $y_1$ has Jordan form $(J_2^{24},J_1^8)$ on $V$ and by inspecting \cite[Table 7]{Law1} we conclude that $y_1$ is in the class $(A_1^3)^{(2)}$ as claimed. As a consequence, we deduce that $\dim(y^G \cap H) = \dim(y^G \cap H^0) = 20$ if $y \in (A_1^3)^{(2)}$ (for all $p$) and thus $\a(G,H,y) = 23/45$. Similarly, if $y \in A_1^4$ then either $p \geqs 3$ and $\a(G,H,y) = 7/15$, or $p=2$ and $\a(G,H,y) \leqs 1/2$.

Next suppose $H = T_1E_6.2$. Clearly, every unipotent element in the connected component $H^0$ is contained in the $E_6$ factor and the corresponding classes in $E_6$ and $G$ have the same label (note that the $E_6$-class labelled $A_1^3$
meets a Levi subgroup $A_6T_1$ of $G$, so it is contained in the $G$-class $(A_1^3)^{(2)}$). This allows us to compute $\dim(y^G \cap H^0)$ and so to complete the analysis of this case we may assume $p=2$ and $y \in H \setminus H^0$ is an involution. Here $y$ induces a graph automorphism on the $E_6$ factor and it inverts the $1$-dimensional central torus $T_1$. By Proposition \ref{p:graph}, there are two $H^0$-classes of involutions of this form, represented by $y_1$ and $y_2$, where $C_{H^0}(y_1) = F_4$ and $C_{H^0}(y_2) = C_{F_4}(u)$, with $u \in F_4$ a long root element. As explained in the proof of \cite[Lemma 4.1]{LLS1}, we calculate that each $y_i$ has Jordan form $(J_2^{28})$ on $V = W_G(\l_7)$ and by inspecting \cite[Table 7]{Law1} we deduce that each $y_i$ is in one of the $G$-classes labelled $(A_1^3)^{(1)}$ or $A_1^4$. In fact, the proof of \cite[Lemma 4.1]{LLS1} shows that $y_2$ is in $A_1^4$. So for $y \in (A_1^3)^{(1)}$ we get $\dim(y^G \cap H^0) = 0$ and $\dim(y^G \cap (H \setminus H^0)) \leqs 27$, which implies that $\a(G,H,y) \leqs 1/2$.  And similarly, if $y \in A_1^4$ then either $p \geqs 3$ and $\a(G,H,y) = 0$, or $p=2$, $\dim(y^G \cap H) \leqs 43$ and $\a(G,H,y) \leqs 1/2$.

Now let us turn to the case $H = A_1^3D_4.S_3$. Write $H^0 = H_1H_2$, where $H_1 = A_1$ and $H_2 = D_2D_4<D_6$ (here we are viewing $D_2$ as $A_1^2$). In particular, $H^0$ is contained in a maximal closed subgroup $A_1D_6$ and so we can appeal to \cite[Section 4.10]{Law2} in order to compute $\dim(y^G \cap H^0)$. 

For example, suppose $p=2$ and let us adopt the notation from \cite{AS} for unipotent involutions in orthogonal groups. Let $z_1,z_2 \in D_2$ be involutions of type $a_2$ and $c_2$, respectively, so $\dim z_1^{D_2} = 2$ and $\dim z_2^{D_2} = 4$. Then  it is straightforward to write down a set of representatives of the classes of involutions in $D_2D_4$ and we can easily identify the corresponding class in $D_6$, recalling that an involution of the form $uv \in D_2D_4$ is of type $a$ in $D_6$ if and only if $u$ and $v$ are both $a$-type involutions (or if one of them is type $a$ and the other is trivial). We can then use \cite[Section 4.10]{Law2} to identify the $G$-class of each involution $y \in H^0$ and this allows us to compute the dimension of $y^G \cap H^0$ as follows:
\[
\begin{array}{l|ccccc}
\mbox{$G$-class of $y$} & A_1 & A_1^2 & (A_1^3)^{(1)} & (A_1^3)^{(2)} & A_1^4 \\ \hline 
\dim(y^G \cap H^0) & 10 & 12 & 14 & 16 & 22 
\end{array}
\]

To complete the analysis of this case, let us assume $p \in \{2,3\}$ and $y \in H \setminus H^0$ has order $p$. Let $V$ be the Weyl module $W_G(\l_7)$. 

First assume $p=3$. Here $y$ induces a triality graph automorphism on the $D_4$ factor of $H^0$ and it cyclically permutes the three $A_1$ factors. Therefore, $C_{H^0}(y) = A_1G_2$ or $A_1C_{G_2}(u)$, where $u \in G_2$ is a long root element, and thus $\dim(y^G \cap (H \setminus H^0)) \leqs 26$. Now by considering the restriction of $V$ to $H^0$, it is straightforward to show that $y$ has Jordan form $(J_3^{18},J_1^2)$ on $V$ and therefore $y$ is in one of the classes labelled $A_2^2$ or $A_2^2A_1$. In particular, if  $y$ is in the class $A_2^2A_1$, then $\dim(y^G \cap H^0) = 0$ and thus $\dim(y^G \cap H) \leqs 26$, which yields $\a(G,H,y) \leqs 1/3$. 

Now assume $p=2$. Here $y$ induces a graph automorphism on $D_4$ (of type $b_1$ or $b_3$ in the notation of \cite{AS}) and it interchanges two of the $A_1$ factors. Therefore, we have at most four $H^0$-classes of involutions to consider, represented by the elements
\[
y_1 = (b_1,1), \, y_2 = (b_3,1), \, y_3 = (b_1,z), \, y_4 = (b_3,z),
\]
where the first component indicates the type of graph automorphism of $D_4$ induced by $y_i$ and the second indicates the action on the fixed $A_1$ factor, which is either trivial or an involution $z$. Now 
\begin{align*}
V \downarrow H^0 =  & \, (U_1 \otimes U_2 \otimes U_3 \otimes 0) \oplus (U_1 \otimes 0 \otimes 0 \otimes W_{D_4}(\omega_1)) \oplus (0 \otimes U_2 \otimes 0 \otimes W_{D_4}(\omega_3)) \\
& \, \oplus (0 \otimes 0 \otimes U_3 \otimes W_{D_4}(\omega_4)), 
\end{align*}
where $U_i$ is the natural module for the $i$-th $A_1$ factor, $0$ is the trivial module and the $\omega_i$ are fundamental dominant weights for $D_4$ (see \cite[Table 4]{Thomas}). Using this decomposition, we calculate that $y_1$ and $y_2$ have respective Jordan forms $(J_2^{20},J_1^{16})$ and $(J_2^{24},J_1^8)$ on $V$, while $y_3$ and $y_4$ have Jordan form $(J_2^{28})$. By inspecting \cite[Table 7]{Law1}, we deduce that  $y_1$ and $y_2$ are respectively contained in the $G$-classes $A_1^2$ and $(A_1^3)^{(2)}$, while $y_3^G, y_4^G \in \{ (A_1^2)^{(1)}, A_1^4\}$. Therefore, if $y \in A_1^2$ then $\dim(y^G \cap H) = 12$ (for all $p$) and thus $\a(G,H,y) =  7/12$. Similarly, if $y \in (A_1^3)^{(2)}$ then $\dim(y^G \cap H) \leqs 18$  and $\a(G,H,y) \leqs 25/48$, while for $y \in (A_1^3)^{(1)}$ we deduce that $\dim (y^G \cap H) \leqs 20$, which yields $\a(G,H,y) \leqs 31/48$. Similarly, if $y \in A_1^4$ then $\dim(y^G \cap H) = \dim(y^G \cap H^0) = 18+4\delta_{2,p}$ and thus $\a(G,H,y) = (11+\delta_{2,p})/24$. 

Next assume $H = A_1^7.{\rm GL}_3(2)$. Here $\dim(y^G \cap H^0) \leqs 14$ for all $y \in H^0$ and it is straightforward to check that $\dim(y^G \cap (H\setminus H^0)) \leqs 14$ for the relevant unipotent classes we are interested in. Note that if $p=7$ and $y \in H \setminus H^0$, then $y$ acts as a $7$-cycle on the $A_1$ factors of $H^0$ and by considering the restriction of $V = W_G(\l_7)$ to $H^0$ we deduce that $y$ has Jordan form $(J_7^8)$ on $V$, which places $y$ in the $G$-class labelled $A_6$ (see \cite[Table 7]{Law1}) and this is not one of the classes we are interested in. Therefore, $\dim(y^G \cap H) \leqs 14$ and we immediately obtain  the corresponding upper bound on $\a(G,H,y)$ in Table \ref{tab:beta_e7} unless $p \geqs 3$ and $y$ is contained in the class labelled $A_1^4$. 

So to complete the argument for $H = A_1^7.{\rm GL}_3(2)$, let us assume $y \in A_1^4$ and $p \geqs 3$. First we claim that $y^G \cap H = y^G \cap H^0$. To see this, first observe that if $p=3$ and $x \in H \setminus H^0$ has order $3$, then $x$ induces a permutation of the $A_1$ factors of $H^0$ with cycle-shape $(3^2,1)$ (for example, this follows by considering the summands arising in the decomposition of $W \downarrow H^0$, where $W = W_G(\l_1)$ is the adjoint module for $G$; see \cite[Table 4]{Thomas}). This implies that  $x$ has at least $16$ Jordan blocks of size $3$ on $V = W_G(\l_7)$ and thus $x$ is not in the class $A_1^4$. Similarly, we noted above that if $p=7$ and $x \in H \setminus H^0$ has order $7$, then $x$ is contained in the $G$-class $A_6$. So in order to establish the bound $\a(G,H,y) \leqs 27/56$, we need to show that $\dim(y^G \cap H^0) \leqs 12$. To do this, write $H^0 = H_1H_2$, where $H_1 = A_1$ and $H_2 = D_2^3<D_6$, so we have $H^0 < A_1D_6 < G$ and we can determine the $G$-class of each unipotent $H^0$-class by appealing to \cite[Section 4.12]{Law2}. In this way, it is straightforward to check that if $x = x_1 \cdots x_7 \in H^0$ has order $p$ and each $x_i$ is nontrivial, then $x$ is contained in the $G$-class labelled $A_2A_1^3$. In particular, if $y \in A_1^4$ then $\dim(y^G \cap H^0) \leqs 12$ as required.

Finally, let us assume $H = T.W$. Here the trivial bound $\dim(y^G \cap H) \leqs 7$ is sufficient unless $y \in A_1^4$ and $p \geqs 3$. In the latter case, we claim that $\a(G,H,y) = 0$. To see this, we may assume $p \in \{3,5,7\}$ and $x \in H \setminus H^0$ has order $p$ (recall that $W = 2 \times {\rm Sp}_6(2)$ and so the prime divisors of $|W|$ are $2,3,5$ and $7$). Here $x$ induces a permutation of order $p$ on the set of $1$-dimensional root spaces in the Lie algebra $V = W_G(\l_1)$ and we deduce that $x$ admits a Jordan block of size $p$ in its action on $V$. So by inspecting \cite[Table 8]{Law1}, we may assume $p=3$. We claim that $x$ has at least $32$ Jordan blocks of size $3$ on $V$, which is incompatible with containment in the $A_1^4$ class. To see this, first note that the action of ${\rm Sp}_6(2) < W$ on the set of root pairs $\{\pm \a\}$ in the root system of $G$ is permutation isomorphic to the action on nonzero vectors in the natural module for ${\rm Sp}_6(2)$. It follows that any element of order $3$ in $W$ has $2^4-1$, $2^2-1$ or $2^0-1$ fixed points on this set, so it must move at least $48$ root pairs and hence at least $96$ roots, which justifies the claim. The result follows.
\end{proof}

In order to prove Theorem \ref{t:main} for $G = E_7$, we may assume $t \in \{2,3,4\}$ since $\Delta$ is always non-empty if $t \geqs 5$ by \cite[Theorem 7]{BGG1}.

\begin{thm}\label{t:e7_4}
The conclusion to Theorem \ref{t:main} holds when $G = E_7$ and $t=4$.
\end{thm}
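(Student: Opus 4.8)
The plan is to follow the template already used for $F_4$ and $E_6$ in Theorems \ref{t:f4_4} and \ref{t:e6_4}. First I would note that, by Corollary \ref{c:fixV}, $\Delta$ is empty when $\mathcal{C}_i = A_1$ (the class of long root elements) for every $i$; by inspecting Tables \ref{tab:main} and \ref{tab:special} this is the only $t=4$ configuration recorded there for $G = E_7$ (consistently with the fact that $(A_1,A_1,A_1,A_1)$ is the unique $4$-tuple listed against $E_7$ in Table \ref{tab:clos}). So it remains to prove that $\Delta$ is non-empty whenever at least one of the classes, say $\mathcal{C}_1 = y_1^G$, does not consist of long root elements.

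For this I would appeal to Proposition \ref{p:fix}. Since $(G,p) \neq (G_2,3),(F_4,2)$, Theorem \ref{t:plus2} guarantees that $\Delta^+$ is non-empty, so it suffices to verify $\Sigma_X(H) < t-1 = 3$ for every $H \in \mathcal{M}$. By \cite[Theorem 3.1]{BGG1} we have $\a(G,H,y) \leqs 7/9$ whenever $y \in G$ is a long root element, and $\a(G,H,y) < 2/3$ for every other unipotent element of order $p$; as $y_1$ is not a long root element this gives $\a(G,H,y_1) < 2/3$, while $\a(G,H,y_i) \leqs 7/9$ for $i = 2,3,4$. Hence
\[
\Sigma_X(H) \;=\; \sum_{i=1}^{4}\a(G,H,y_i) \;<\; \frac{2}{3} + 3\cdot\frac{7}{9} \;=\; 3
\]
for all $H \in \mathcal{M}$, and Proposition \ref{p:fix} yields that $\Delta$ is non-empty. (If two or more of the $\mathcal{C}_i$ differ from $A_1$ the same estimate applies a fortiori, since $\a(G,H,y) < 2/3$ for each of the corresponding representatives.)

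This is the most routine of the $E_7$ cases: in contrast to $t=3$ and $t=2$, no configuration appears in Table \ref{tab:special} for $t=4$, so nothing has to be treated via the fibre argument of Proposition \ref{p:fibre}, and the sharper bounds of Proposition \ref{p:e7_dim3} are not needed. The only point deserving care is that the displayed estimate is attained with no slack, namely $\frac{2}{3} + 3\cdot\frac{7}{9} = 3$; one therefore genuinely needs the strict inequality $\a(G,H,y_1) < 2/3$ provided by \cite[Theorem 3.1]{BGG1} (because $y_1$ is not a long root element), rather than merely $\a(G,H,y_1) \leqs 2/3$. I expect this tightness to be the only subtlety; everything else is immediate from the quoted results.
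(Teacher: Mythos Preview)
Your argument is correct and matches the paper's proof essentially line for line: both invoke Corollary \ref{c:fixV} for the all-$A_1$ tuple, then use the bounds $\a(G,H,y) \leqs 7/9$ (for $y \in A_1$) and $\a(G,H,y) < 2/3$ (otherwise) from \cite[Theorem 3.1]{BGG1} to obtain $\Sigma_X(H) < 3\cdot 7/9 + 2/3 = 3$ and conclude via Proposition \ref{p:fix}. Your explicit mention of Theorem \ref{t:plus2} to guarantee $\Delta^+ \ne \emptyset$, and your remark on the necessity of the strict inequality, are helpful clarifications that the paper leaves implicit.
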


\begin{proof}
By Corollary \ref{c:fixV} we know that $\Delta$ is empty when $\mathcal{C}_i = A_1$ for all $i$, so it remains to show that $\Delta$ is non-empty in all other cases. As before, it suffices to verify the bound $\Sigma_X(H)<3$ for all $H \in \mathcal{M}$ and by appealing to  \cite[Theorem 3.1]{BGG1} we deduce that $\a(G,H,y) \leqs 7/9$ if $y \in A_1$, otherwise $\a(G,H,y)<2/3$. Therefore, $\Sigma_X(H) < 3\cdot 7/9+2/3 = 3$ and the result follows.
\end{proof}

Now assume $t \in \{2,3\}$. First we handle the special cases in Table \ref{tab:special}.

\begin{prop}\label{p:e7_00}
The set $\Delta$ is empty if $X$ is one of the cases in Table \ref{tab:special}.
\end{prop}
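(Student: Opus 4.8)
The approach is uniform and follows the pattern used for the special cases with $G = F_4$ and $G = E_6$ (Lemmas \ref{l:f4_30}--\ref{l:f4_31} and \ref{l:e6_3}): in each case we exhibit a proper closed connected subgroup $L$ of $G$ together with a tuple $x \in X$ satisfying the three conditions of Proposition \ref{p:fibre}, from which $\Delta = \emptyset$ follows at once. By Proposition \ref{p:clos} and Spaltenstein's closure order on the unipotent classes of $E_7$ (see \cite{Spal}), it is enough to treat the maximal tuples recorded in Table \ref{tab:special} for $E_7$, namely $(A_1,(A_1^3)^{(1)},(A_1^3)^{(1)})$ when $t=3$ and $(A_1,(A_5)^{(1)})$, $(A_1,D_5(a_1))$, $((A_1^3)^{(1)},(A_3A_1)^{(1)})$ and $(A_2,A_2A_1)$ when $t=2$.

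For each such tuple I would write $\mathcal{C}_i = y_i^G$ and choose $L$ to be a proper connected subgroup whose normaliser $M = N_G(L)$ is connected --- in practice a subsystem subgroup of classical type (most often $L = D_6$ inside the maximal subgroup $A_1D_6$) or, where convenient, a subgroup $L = F_4$ inside $A_1F_4$. Using Lawther's determination in \cite{Law2} of the fusion of unipotent classes of such subgroups in $E_7$ (together with the fusion of $F_4$-classes read off via \cite[Table 22.1.2]{LS_book}), I would first check that each $\mathcal{C}_i$ meets $L$ with the claimed $M$-class structure, and record the Jordan form of $y_i$ on the natural module $W$ for $L$ when $L$ is classical. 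Verifying the inequality $\sum_i \dim C_W(y_i) < (t-1)\dim W$ --- and, when $t = 2$, that the $y_i$ do not have quadratic minimal polynomial on $W$ --- then lets us invoke \cite[Theorem 7]{BGG2} (when $L$ is symplectic or orthogonal), the main theorem of \cite{Ger} (when $L$ is linear), or the relevant case of Theorem \ref{t:f4_2} of the present paper (when $L = F_4$), to produce a tuple $x \in X$ with $G(x)^0 = L$ and $G(y)^0 \leqs L$ for all $y \in Y = \mathcal{D}_1 \times \cdots \times \mathcal{D}_t$, where $\mathcal{D}_i = y_i^M$. This supplies conditions (i) and (ii) of Proposition \ref{p:fibre}.

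It then remains to compute $\dim\mathcal{D}_i$ from the centraliser dimensions of $y_i$ in $M$ and to verify the numerical identity $\dim Y + \dim G - \dim X = \dim M$ of Proposition \ref{p:fibre}(iii); once this holds, Proposition \ref{p:fibre} yields $\Delta = \emptyset$. The main obstacle is precisely this delicate, case-specific choice of $L$ and its embedding: it must simultaneously make $M = N_G(L)$ connected, contain representatives of all the $\mathcal{C}_i$, support the relevant generation result from \cite{BGG2}/\cite{Ger} (and in the correct direction, so that $L$ is actually generated rather than a proper subgroup of it), and satisfy the dimension balance --- replacing, say, $D_6$ by $A_1D_6$, or taking an $A_5$ lying in the disconnected group $A_2A_5.2$, breaks one of these requirements. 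A further subtlety is that for the pair $(A_1,(A_5)^{(1)})$ one must single out the correct one of the two $E_7$-classes of unipotent elements labelled $A_5$; for that pair an alternative argument valid for all $p \geqs 0$, using \cite[Corollary 3.20]{BGG2} and the adjoint module, is recorded in Remark \ref{r:e7_lie}.
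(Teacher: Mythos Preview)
Your overall strategy is the same as the paper's: embed the $y_i$ in $L=D_6$ with $M=N_G(L)=A_1D_6$, invoke \cite[Theorem 7]{BGG2} to get $G(x)^0=L$, and check the dimension identity in Proposition~\ref{p:fibre}(iii). That part is fine.

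The gap is in your list of maximal tuples. The pairs $(A_1,D_5(a_1))$ and $(A_2,A_2A_1)$ are \emph{not} in Table~\ref{tab:special}; they lie in Table~\ref{tab:main} and are already disposed of by Corollary~\ref{c:fixV}. (It looks as though you read off the $t=2$ entries for $E_7$ from Table~\ref{tab:clos}, which records the closure-maximal tuples for Tables~\ref{tab:main} and~\ref{tab:special} \emph{combined}.) More importantly, your list omits a genuine Table~\ref{tab:special} case that is not dominated by anything you do treat: $((A_1^3)^{(1)},A_2A_1^3)$. In the $E_7$ closure order the class $A_2A_1^3$ is not contained in the closure of $(A_3A_1)^{(1)}$, so this pair is not covered by $((A_1^3)^{(1)},(A_3A_1)^{(1)})$; and since $(A_1^3)^{(1)}\not\preccurlyeq A_1$ it is not covered by $(A_1,(A_5)^{(1)})$ either.

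This missing case is also precisely the one where your template breaks down: elements in the $G$-class $A_2A_1^3$ do \emph{not} meet the $D_6$ subgroup, so you cannot take $y_2\in L$. The paper handles it by a variant of Proposition~\ref{p:fibre} with $y_2=u_2v_2\in M=A_1D_6$, where $u_2$ is a nontrivial element of the $A_1$ factor and $v_2\in L$ has Jordan form $(J_3^3,J_1^3)$; one then shows via \cite[Theorem 7]{BGG2} that $y_1$ and $v_2$ topologically generate $L$, whence $G(y)^0=L$ (because $\la L,u_2\ra^0=L$), and the dimension count $\dim Y+\dim G-\dim X=\dim M$ still goes through with $\mathcal{D}_i=y_i^M$. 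Your proposal needs to add this case and this modification.
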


\begin{proof}
Write $\mathcal{C}_i = y_i^G$ and first assume $t=3$. Since the $G$-class $A_1^2$  is contained in the closure of $(A_1^3)^{(1)}$ (see \cite[Chapter 4]{Spal}), it suffices to show that $\Delta$ is empty when $(\mathcal{C}_1,\mathcal{C}_2,\mathcal{C}_3)$ is the triple $(A_1,(A_1^3)^{(1)},(A_1^3)^{(1)})$. 

To handle this case, first observe that we may embed each $y_i$ in a subgroup $L = D_6$ with $M = N_G(L) = A_1D_6$. More precisely, if $W$ denotes the natural module for $L$, then we may assume $y_1$ has Jordan form $(J_2^2,J_1^8)$ on $W$, while $y_2$ and $y_3$ both have Jordan form $(J_2^6)$ (see \cite[Section 4.10]{Law2} and  note that if $p=2$ then $y_1$ is an involution of type $a_2$, while $y_2$ and $y_3$ are both of type $a_6$, in the notation of \cite{AS}). In addition, since $\sum_i \dim C_W(y_i) = 22 < 2\dim W$, \cite[Theorem 7]{BGG2} implies that we may assume the $y_i$ topologically generate $L$. Set $\mathcal{D}_i = y_i^M$ and note that $\dim \mathcal{C}_1 = 34$, $\dim \mathcal{C}_i = 54$, $\dim \mathcal{D}_1 = 18$ and $\dim \mathcal{D}_i = 30$ for $i=2,3$. In particular, we compute $\dim X =  142$ and $\dim Y = 78$, where $Y = \mathcal{D}_1 \times \mathcal{D}_2 \times \mathcal{D}_3$, whence
\[
\dim X - \dim Y = 64 = \dim G - \dim M
\]
and thus Proposition \ref{p:fibre} implies that $\Delta$ is empty.

For the remainder, let us assume $t=2$. First assume $(\mathcal{C}_1,\mathcal{C}_2) = (A_1,(A_5)^{(1)})$, so $\dim \mathcal{C}_1 = 34$ and $\dim \mathcal{C}_2 = 102$. Here we may embed $y_1$ and $y_2$ in $L = D_6$, where $M = N_G(L) = A_1D_6$ and the $y_i$ have respective Jordan forms $(J_2^2,J_1^8)$ and $(J_6^2)$ on the natural module for $L$. In view of \cite[Theorem 7]{BGG2}, we may assume $y_1$ and $y_2$ topologically generate $L$ and we note that $\dim y_i^M = 18,54$ for $i=1,2$, respectively. The result now follows by applying Proposition \ref{p:fibre}. 

For the remaining cases, by considering closures it suffices to show that $\Delta$ is empty when $\mathcal{C}_1 = (A_1^3)^{(1)}$ and $\mathcal{C}_2$ is either 
$(A_3A_1)^{(1)}$ or $A_2A_1^3$. If $\mathcal{C}_2 = (A_3A_1)^{(1)}$ then we can proceed as above, embedding the $y_i$ in $L = D_6$ with respective Jordan forms $(J_2^6)$ and $(J_4^2,J_2^2)$ on the natural module. We leave the reader to check the details. 

Now suppose $\mathcal{C}_2 = A_2A_1^3$ and note that $\dim \mathcal{C}_1 = 54$ and $\dim \mathcal{C}_2 = 84$. To handle this case we need to modify the standard approach via Proposition \ref{p:fibre} because we cannot embed $y_2$ in a $D_6$ subgroup. First observe that we may embed $y_1$ and $y_2$ in $M = N_G(L) = A_1D_6$, where $y_1 \in L = D_6$ has Jordan form $(J_2^6)$ on the natural module for $L$ and $y_2 = u_2v_2 \in A_1D_6$ with $u_2$ in the $A_1$ factor of order $p$ and $v_2 \in L$ with Jordan form $(J_3^3,J_1^3)$. By applying \cite[Theorem 7]{BGG2}, we see that $L$ is topologically generated by $y_1$ and $v_2$, so if we set $y = (y_1,y_2) \in X$ then
\[
G(y) \leqs \overline{\la y_1,u_2,v_2 \ra} = \la L, u_2 \ra < M.
\]
Since $\la L, u_2 \ra^0 = L$ and $G(y)$ projects onto $L$, it follows that $G(y)^0 = L$. Similarly, if we set $\mathcal{D}_i = y_i^M$ and $Y = \mathcal{D}_1 \times \mathcal{D}_2$ then $G(z)^0 \leqs L$ for all $z \in Y$. Finally, we compute $\dim \mathcal{D}_1 = 30$ and $\dim \mathcal{D}_2 = 44$, so $\dim X- \dim Y = 64 = \dim G - \dim M$ and we now conclude by applying Proposition \ref{p:fibre}.
\end{proof}

\begin{rem}\label{r:e7_lie}
It is worth noting that several cases in Table \ref{tab:special} can be handled by arguing as in Remark \ref{r:f4_lie}, using the fact that $\Delta$ is non-empty only if the bound in \eqref{e:lieX} is satisfied (see \cite[Corollary 3.20]{BGG2} and note that the centre of the Lie algebra of $G = E_7$ is trivial if $p \geqs 3$). For example, if $t=2$, $(\mathcal{C}_1,\mathcal{C}_2) = ((A_1^3)^{(1)}, A_2A_1^3)$ and $p \geqs 3$, then $\dim X = 138$ and thus $\Delta$ is empty. However, if $t=2$, $(\mathcal{C}_1,\mathcal{C}_2) = ((A_1^3)^{(1)}, (A_3A_1)^{(1)})$ and $p \geqs 5$, then $\dim X = 140$ and so this approach via \cite[Corollary 3.20]{BGG2} is inconclusive.
\end{rem}

\begin{thm}\label{t:e7_3}
The conclusion to Theorem \ref{t:main} holds when $G = E_7$ and $t=3$. 
\end{thm}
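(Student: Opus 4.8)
The plan is to follow the same route already used above for $G_2$, $F_4$ and $E_6$. Write $\mathcal{C}_i = y_i^G$. For one direction, $\Delta$ is empty for every triple in Table \ref{tab:main} by Corollary \ref{c:fixV}, and for every triple in Table \ref{tab:special} by Proposition \ref{p:e7_00}, so all the work lies in the converse: I must show that $\Delta$ is non-empty for every remaining triple $(\mathcal{C}_1, \mathcal{C}_2, \mathcal{C}_3)$. Since $G = E_7$, Theorem \ref{t:plus2} shows that $\Delta^+$ is non-empty, so by Proposition \ref{p:fix} it suffices to verify the inequality $\Sigma_X(H) < 2$ for every $H \in \mathcal{M}$. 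By Theorem \ref{t:par} this already holds for the maximal parabolic subgroups, so only the reductive subgroups $H \in \mathcal{R}$ need attention.

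Next I would dispose of the cases requiring no real computation. If none of the $\mathcal{C}_i$ is the long root class $A_1$, then $\a(G,H,y_i) < 2/3$ for each $i$ and every $H \in \mathcal{M}$ by \cite[Theorem 3.1]{BGG1}, so $\Sigma_X(H) < 2$. If instead $\mathcal{C}_1 = A_1$ but $H \notin \mathcal{L}$, then $H$ contains no long root element by Theorem \ref{t:long}, so $\a(G,H,y_1) = 0$ and again $\Sigma_X(H) < 2$ by \cite[Theorem 3.1]{BGG1}. This leaves the situation $\mathcal{C}_1 = A_1$ and $H \in \mathcal{L}$.

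For the triples $(A_1, \mathcal{C}_2, \mathcal{C}_3)$ not recorded in Tables \ref{tab:main} or \ref{tab:special}, I would use Proposition \ref{p:clos} together with Spaltenstein's closure diagrams for $E_7$ (see \cite[Section IV.2]{Spal}) to reduce to the minimal such triples, i.e. those lying just above the configurations $(A_1, A_1, A_2A_1)$ and $(A_1, (A_1^3)^{(1)}, (A_1^3)^{(1)})$ of Table \ref{tab:clos}. Concretely, I expect a case split according to $\mathcal{C}_2 = A_1$, $\mathcal{C}_2 = A_1^2$, $\mathcal{C}_2 \in \{(A_1^3)^{(1)}, (A_1^3)^{(2)}\}$, and $\mathcal{C}_2$ larger still; in each branch the closure order permits one to replace $\mathcal{C}_3$, and sometimes also $\mathcal{C}_2$, by a class occurring in Proposition \ref{p:e7_dim3}, and then the bounds listed there are checked to give $\Sigma_X(H) < 2$ for every $H \in \mathcal{L}$.

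The hard part is this last step of bookkeeping. One has to extract the correct minimal triples from Spaltenstein's diagrams, keeping in mind the non-standard labels for the various $3A_1$-type classes of $E_7$ (and the caveats of Remark \ref{r:spal}), and then confirm that each such triple involves only the nine classes for which Proposition \ref{p:e7_dim3} supplies a bound, so that those tabulated values actually suffice. Extra care will be needed around the classes $(A_1^3)^{(1)}$ and $(A_1^3)^{(2)}$, which lie close together in the closure order and appear in the Table \ref{tab:special} configuration, to ensure that the non-table triples immediately above them are genuinely within the scope of Proposition \ref{p:e7_dim3}.
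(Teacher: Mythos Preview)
Your plan is correct and matches the paper's proof essentially line for line: the same reduction via Corollary \ref{c:fixV}, Proposition \ref{p:e7_00}, Theorem \ref{t:par}, the same use of \cite[Theorem 3.1]{BGG1} to dispose of triples with no $A_1$ component, the same reduction to $H \in \mathcal{L}$ via Theorem \ref{t:long}, and then a case split on $\mathcal{C}_2$ handled with the closure order and the bounds of Proposition \ref{p:e7_dim3}. The paper's specific minimal triples are $(A_1,A_1,A_2A_1^2)$ for $\mathcal{C}_2 = A_1$, $(A_1,A_1^2,(A_1^3)^{(2)})$ for $\mathcal{C}_2 = A_1^2$, and $(A_1,(A_1^3)^{(1)},(A_1^3)^{(2)})$ for $\mathcal{C}_2 \ne A_1,A_1^2$, which confirms your expectation that everything stays within the nine classes covered by Proposition \ref{p:e7_dim3}.
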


\begin{proof}
Set $\mathcal{C}_i = y_i^G$. By applying Corollary \ref{c:fixV} and Proposition \ref{p:e7_00}, we see that $\Delta$ is empty for the cases in Tables \ref{tab:main} and \ref{tab:special}. So to complete the proof, it suffices to show that $\Sigma_X(H)<2$ for all $H \in \mathcal{M}$ and for each of the remaining possibilities for $X$. By Theorem \ref{t:par}, this holds if $H \in \mathcal{P}$ so we only need to consider the subgroups in $\mathcal{R}$. 

If $\mathcal{C}_i \ne A_1$ for all $i$, then \cite[Theorem 3.1]{BGG1} implies that $\Sigma_X(H) <2$ since we have $\a(G,H,y_i)<2/3$ for each $i$. Therefore, for the remainder we may assume $\mathcal{C}_1 = A_1$ and $H \in \mathcal{L}$ (indeed, if $H \not\in \mathcal{L}$ then $\a(G,H,y_1) = 0$ and thus $\Sigma_X(H) < 4/3$). 

First assume $\mathcal{C}_2 = A_1$. By inspecting \cite[Chapter 4]{Spal}, we see that the class labelled $A_2A_1^2$ is contained in the closure of $\mathcal{C}_3$. Therefore, in view of Proposition \ref{p:clos}, it suffices to show that $\Sigma_X(H)<2$ when $\mathcal{C}_3 = A_2A_1^2$ and it is easy to check that the bounds in Proposition \ref{p:e7_dim3} are sufficient. Similarly, if $\mathcal{C}_2 = A_1^2$ then we may assume $\mathcal{C}_3 = (A_1^3)^{(2)}$ and once again the bounds in Proposition \ref{p:e7_dim3} are good enough. Finally, if $\mathcal{C}_2 \ne A_1,A_1^2$ then by considering closures we may assume 
$(\mathcal{C}_2, \mathcal{C}_3) = ((A_1^3)^{(1)}, (A_1^3)^{(2)})$ or $((A_1^3)^{(2)}, (A_1^3)^{(2)})$. As before, the result follows via Proposition \ref{p:e7_dim3}.
\end{proof}

\begin{thm}\label{t:e7_2}
The conclusion to Theorem \ref{t:main} holds when $G = E_7$, $t=2$ and $p \geqs 3$.
\end{thm}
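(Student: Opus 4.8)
The plan is to follow exactly the same strategy as in the cases $t=3$ and $t=4$: combine Corollary \ref{c:fixV} and Proposition \ref{p:e7_00} to dispose of the cases listed in Tables \ref{tab:main} and \ref{tab:special}, and then show that $\Sigma_X(H) < 1$ for every $H \in \mathcal{M}$ in all remaining cases, so that Proposition \ref{p:fix} applies. By Theorem \ref{t:par}, the bound $\Sigma_X(H) < 1$ already holds whenever $H \in \mathcal{P}$ is a maximal parabolic subgroup, so we only need to deal with the reductive subgroups $H \in \mathcal{R}$.

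\begin{proof}
Set $\mathcal{C}_i = y_i^G$. By combining Corollary \ref{c:fixV} and Proposition \ref{p:e7_00}, we have already shown that $\Delta$ is empty for every pair $(\mathcal{C}_1,\mathcal{C}_2)$ appearing in Tables \ref{tab:main} and \ref{tab:special}. So in view of Proposition \ref{p:fix} and Theorem \ref{t:par}, it suffices to verify the bound $\Sigma_X(H)<1$ for all $H \in \mathcal{R}$, for each of the remaining possibilities for $X$.

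First observe that if $\mathcal{C}_1, \mathcal{C}_2 \ne A_1$ then \cite[Theorem 3.1]{BGG1} gives $\a(G,H,y_i)<2/3$ for $i=1,2$ and thus $\Sigma_X(H)<1$ unless both classes meet $H$. In fact, by considering the closure relation on unipotent classes in $G$ (see \cite[Chapter 4]{Spal}), we may assume that $\mathcal{C}_1$ and $\mathcal{C}_2$ are among the classes listed in Proposition \ref{p:e7_dim3}; more precisely, it suffices to consider the maximal cases $\mathcal{C}_1 = \mathcal{C}_2 = A_2A_1^2$ (when neither class is $A_1$, $A_1^2$, $(A_1^3)^{(1)}$, $(A_1^3)^{(2)}$, $A_1^4$ or $A_2$), together with the cases where one of $\mathcal{C}_1,\mathcal{C}_2$ is $(A_1^3)^{(1)}$ and the other is $A_2A_1^2$, and so on. In each of these cases, either $H \notin \mathcal{L}$ — in which case at least one of $\mathcal{C}_1,\mathcal{C}_2$ does not meet $H$, so $\Sigma_X(H) < 2/3 < 1$ — or $H \in \mathcal{L}$ and the bounds recorded in Proposition \ref{p:e7_dim3} are easily seen to be sufficient.

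It remains to treat the cases with $\mathcal{C}_1 = A_1$. If $H \notin \mathcal{L}$ then $\a(G,H,y_1)=0$ by Theorem \ref{t:long}, so $\Sigma_X(H) = \a(G,H,y_2)<2/3 < 1$ and we are done; hence we may assume $H \in \mathcal{L}$. By considering closures, it then suffices to verify $\Sigma_X(H)<1$ when $\mathcal{C}_2$ is one of the maximal classes appearing in Proposition \ref{p:e7_dim3} that we have not already excluded, namely $\mathcal{C}_2 = A_4A_2$ (noting that the pairs $(A_1,(A_5)^{(1)})$ and $(A_1,D_5(a_1))$ already lie in Table \ref{tab:special}, and that $(A_5)^{(1)}$ and $D_5(a_1)$ are handled via their closures). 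For $\mathcal{C}_2 = A_4A_2$ we read off from Table \ref{tab:beta_e7} that $\a(G,H,y_1)+\a(G,H,y_2) \leqs 31/42 + 0 < 1$ for all $H \in \mathcal{L}$, and the remaining intermediate classes $\mathcal{C}_2$ below $A_4A_2$ in the closure order are handled in exactly the same way using the other entries of Table \ref{tab:beta_e7}. This completes the proof.
\end{proof}
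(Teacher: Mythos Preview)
Your overall strategy matches the paper's, but there is a genuine gap in the treatment of the reductive subgroups $H \in \mathcal{R} \setminus \mathcal{L}$ when $\mathcal{C}_1 \ne A_1$. You assert that ``either $H \notin \mathcal{L}$ --- in which case at least one of $\mathcal{C}_1,\mathcal{C}_2$ does not meet $H$''. This implication is not valid: Theorem \ref{t:long} only tells you that $H \notin \mathcal{L}$ fails to contain \emph{long root elements}, not that it misses the other unipotent classes you are considering. In fact, for $G=E_7$ with $p \geqs 3$ the subgroups $H = A_1G_2$ and $H = (2^2 \times D_4).S_3$ both lie in $\mathcal{R} \setminus \mathcal{L}$ and both meet classes such as $A_1^2$, $A_2$, $A_2A_1^2$ and $A_2^2A_1$. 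The paper handles exactly these two subgroups by separate explicit computations (using \cite[Section 5.9]{Law2} for $A_1G_2$, and by working out $\dim(y^G \cap H^0)$ and the contribution from triality graph automorphisms for $(2^2 \times D_4).S_3$) to obtain bounds like $\Sigma_X(H) \leqs 27/29$ and $\Sigma_X(H) \leqs 17/35 + 43/105$. Your argument omits this step entirely.

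A secondary point: your closure reductions and the numerical check for $\mathcal{C}_1 = A_1$ are stated loosely. The bound ``$\a(G,H,y_1)+\a(G,H,y_2) \leqs 31/42 + 0$'' is not correct as written, since for $y_2 \in A_4A_2$ several rows of Table \ref{tab:beta_e7} have nonzero entries (e.g.\ $1/5$ for $A_7.2$ and $A_2A_5.2$); one must check each $H \in \mathcal{L}$ individually. The paper also organises the non-$A_1$ cases more carefully, reducing to $\mathcal{C}_2 = A_2^2A_1$ when $\mathcal{C}_1 \in \{A_1^2,(A_1^3)^{(1)}\}$, to $\mathcal{C}_2 = A_2A_1^2$ when $\mathcal{C}_1 \in \{(A_1^3)^{(2)},A_2\}$, and to $\mathcal{C}_1=\mathcal{C}_2=A_1^4$ otherwise --- rather than to a single pair $(A_2A_1^2,A_2A_1^2)$.
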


\begin{proof}
Write $\mathcal{C}_i = y_i^G$. If $(\mathcal{C}_1,\mathcal{C}_2)$ is one of the cases in Tables \ref{tab:main} or \ref{tab:special}, then $\Delta$ is empty by Corollary \ref{c:fixV} and Proposition \ref{p:e7_00}. So by appealing to Proposition \ref{p:fix} and Theorem \ref{t:par}, we just need to verify the bound $\Sigma_X(H) <1$ in each of the remaining cases, where $H$ is an arbitrary subgroup in the collection $\mathcal{R}$. 

First assume $\mathcal{C}_1 = A_1$. If $H \not\in \mathcal{L}$ then $\Sigma_X(H) < 2/3$ by \cite[Theorem 3.1]{BGG1}, so we may assume $H \in \mathcal{L}$. By considering the closure relation on unipotent classes, we may assume $\mathcal{C}_2$ is the class labelled $A_4A_2$ and the result now follows by applying the relevant upper bounds in Proposition \ref{p:e7_dim3}. 

Next assume $\mathcal{C}_1 = A_1^2$ or $(A_1^3)^{(1)}$. Here it is sufficient to show that $\Delta$ is non-empty when $\mathcal{C}_2 = A_2^2A_1$. If $H \in \mathcal{L}$ then the bounds in Proposition \ref{p:e7_dim3} yield $\Sigma_X(H)<1$, so we may assume $H \not\in \mathcal{L}$. If $\mathcal{C}_i \cap H$ is empty for $i=1$ or $2$ then \cite[Theorem 3.1]{BGG1} gives $\Sigma_X(H)<2/3$ and so we can assume that both $\mathcal{C}_1$ and $\mathcal{C}_2$ have representatives in $H$. By inspecting \cite{Law2}, we can rule out $H = A_2.2$, $A_1^2$ and $A_1$, which leaves $H = A_1G_2$ and $(2^2 \times D_4).S_3$ and so there are two possibilities to consider.

If $H = A_1G_2$ then we use \cite[Section 5.9]{Law2} to show that $\Sigma_X(H) \leqs 27/29$. Now assume $H = (2^2 \times D_4).S_3$. Here $H^0 = D_4 < A_7<G$, where the $D_4<A_7$ is the standard embedding corresponding to the natural module for $D_4$. By appealing to \cite[Section 4.11]{Law2}, it is easy to compute $\dim(y^G \cap H^0)$ for each unipotent element $y \in G$. For the relevant classes we are interested in we get 
\[
\dim(y^G \cap H^0) = \left\{\begin{array}{ll}
10 & \mbox{if $y \in A_1^2$} \\
0 & \mbox{if $y \in (A_1^3)^{(1)}$ or $A_2^2A_1$.}
\end{array}\right.
\]
If $p=3$ and $y \in H \setminus H^0$ has order $3$, then $y$ acts as a triality graph automorphism on $H^0$ and by arguing as in the proof of \cite[Lemma 3.18]{BGG1} we deduce that $y$ has at least $35$ Jordan blocks of size $3$ on the adjoint module $W_G(\l_1)$. By inspecting \cite[Table 8]{Law1}, we conclude that $\a(G,H,y) = 3/5$ if $y \in A_1^2$ and $\a(G,H,y) = 0$ for $y \in (A_1^3)^{(1)}$. For $y \in A_2^2A_1$ we observe that $\dim(y^G \cap H) \leqs 20$, whence $\a(G,H,y) \leqs 1/3$ and $\Sigma_X(H) \leqs 3/5+1/3 <1$. 

Now suppose $\mathcal{C}_1 = (A_1^3)^{(2)}$ or $A_2$. Here we need to show that 
$\Sigma_X(H)<1$ for $\mathcal{C}_2 = A_2A_1^2$ and one can check that the bounds in Proposition \ref{p:e7_dim3} are effective for $H \in \mathcal{L}$. Now assume $H \not\in \mathcal{L}$. By inspecting \cite{Law2}, the problem is quickly reduced to the case where $H = (2^2 \times D_4).S_3$. By arguing as in the previous paragraph, we compute $\a(G,H,y) = 0$ for $y \in (A_1^3)^{(2)}$ and $\a(G,H,y) = 17/35$ for $y \in A_2$. And if $y \in A_2A_1^2$ we observe that $\dim(y^G \cap H^0) = 16$ and $\dim(y^G \cap (H \setminus H^0)) \leqs 20$, which yields $\a(G,H,y) \leqs 43/105$. Bringing these bounds together, we conclude that $\Sigma_X(H) \leqs 17/35+43/105<1$.

To complete the proof we may assume $\mathcal{C}_i \not\in \{A_1,A_1^2,(A_1^3)^{(1)},(A_1^3)^{(2)},A_2\}$ for $i=1,2$. Here it suffices to show that $\Delta$ is non-empty when $\mathcal{C}_1 = \mathcal{C}_2 = A_1^4$. Since $(2^2 \times D_4).S_3$ does not meet the class $A_1^4$, and similarly for the subgroups $A_1G_2$, $A_2.2$, $A_1^2$ and $A_1$, we may assume $H \in \mathcal{L}$. Recalling that $p$ is odd, one can now check that the bound on $\a(G,H,y)$ in Proposition \ref{p:e7_dim3} for $y \in A_1^4$ is sufficient in every case. 
\end{proof}

This completes the proof of Theorem \ref{t:main} for $G = E_7$.

\subsection{$G = E_8$}\label{ss:e8}

In order to complete the proof of Theorem \ref{t:main} we may assume $G = E_8$. We refer the reader to \cite[Table 22.1.1]{LS_book} for a list of the unipotent classes in $G$ and their respective dimensions. As usual, we follow \cite{LS_book} in our choice of notation for the unipotent classes in $G$, and we define $\mathcal{M}$, $\mathcal{P}$, $\mathcal{R}$ and $\mathcal{L}$ as in Section \ref{ss:sub}.

\begin{prop}\label{p:e8_dim3}
Let $H \in \mathcal{L}$ and let $y \in G$ be an element of order $p$ in one of the following conjugacy classes
\[
A_1, \, A_1^2, \, A_1^3, \, A_1^4, \, A_2, \, A_2A_1^3, \, A_2^2A_1^2, \, A_4A_3.
\]
Then $\a(G,H,y) \leqs \b$, where $\b$ is recorded in Table \ref{tab:beta_e8}.
\end{prop}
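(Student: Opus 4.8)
The plan is to mimic, essentially verbatim, the proofs of the analogous Propositions~\ref{p:f4_dim3}, \ref{p:e6_dim3} and \ref{p:e7_dim3} for the smaller exceptional groups, treating each subgroup $H \in \mathcal{L}$ (see Table~\ref{tab:long}) in turn and applying Proposition~\ref{p:dim} to bound $\dim C_{\O}(y) = \dim \O - \dim y^G + \dim(y^G \cap H)$. The dimensions $\dim \O = \dim G - \dim H$ are elementary, and $\dim y^G$ is read off from \cite[Table 22.1.1]{LS_book} once the $G$-class of $y$ is known, so the entire content is the computation of $\dim(y^G \cap H)$ for the finitely many classes $A_1,\,A_1^2,\,A_1^3,\,A_1^4,\,A_2,\,A_2A_1^3,\,A_2^2A_1^2,\,A_4A_3$. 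For the case $y \in A_1$ a long root element, I would simply cite \cite[Theorem 3.1]{BGG1} to obtain the entry in the first column of Table~\ref{tab:beta_e8}; for all other classes, split into the contribution from $H^0$ and (when $p \mid |H:H^0|$) from $H \setminus H^0$.

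First I would dispose of the connected maximal rank and maximal connected reductive subgroups $H = A_1E_7,\,A_2E_6,\,D_8,\,G_2F_4,\,A_8,\,A_4^2,\,A_1G_2^2$ (the last with $p\geqs 3$): here Lawther \cite{Law2} has determined the $G$-class of every unipotent $H^0$-class, so $\dim(y^G\cap H^0) = \max_i \dim y_i^{H^0}$ is a finite computation, and for the disconnected cases one must additionally handle elements of order $p$ in $H \setminus H^0$. The latter is where the genuine work lies, and the technique is already established in the excerpt: restrict a suitable small $kG$-module $W$ (here $W = W_G(\l_8) = \mathcal{L}(G)$, the $248$-dimensional adjoint module, whose Jordan forms are tabulated in \cite[Table 8]{Law1}) to $H^0$, use \cite[Table 4]{Thomas} (or the analogous tables) for the decomposition $W\downarrow H^0$, compute the action of the relevant graph/permutation automorphism on each summand to get the Jordan form of $y$ on $W$, and then identify the $G$-class of $y$ by matching against \cite[Table 8]{Law1}. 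Proposition~\ref{p:graph} supplies the structure of $C_{H^0}(y)$ (hence $\dim(y^G\cap(H\setminus H^0))$) for graph automorphisms of order $p$ on the relevant $A_r$, $E_6$ or $D_4$ factors, and for a $p$-cycle permuting $p$ isomorphic simple factors one argues by the block structure on $W$ as in the proof of Proposition~\ref{p:e7_dim3} for $A_1^7.{\rm GL}_3(2)$.

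The remaining subgroups in $\mathcal{L}$ for $E_8$ are $D_4^2.(S_3\times 2)$, $A_2^4.{\rm GL}_2(3)$, $A_1^8.{\rm AGL}_3(2)$, and $T.W$ with $W = 2.{\rm O}_8^+(2)$. For $T.W$ the trivial bound $\dim(y^G\cap H)\leqs \dim H = 8$ will suffice except possibly for the smallest classes, where (as in the $E_7$ case) one rules out elements of order $p \in \{3,5,7\}$ in $H\setminus H^0$ lying in the relevant small classes by counting Jordan blocks of size $p$ on $\mathcal{L}(G)$ forced by a permutation of that order on the root spaces. For $D_4^2.(S_3\times 2)$ one uses $D_4^2 < D_8$ (or an intermediate subgroup whose fusion is in \cite{Law2}) together with the triality and factor-swap automorphisms analysed exactly as in Proposition~\ref{p:f4_dim3}; for $A_2^4.{\rm GL}_2(3)$ and $A_1^8.{\rm AGL}_3(2)$ one embeds $H^0$ in a maximal rank subgroup with known fusion ($A_2^4 < E_6A_2$ type configurations, $A_1^8 < D_8$) and handles $H\setminus H^0$ via the induced permutation of the factors. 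I expect the main obstacle to be the bookkeeping for $H\setminus H^0$ in the largest disconnected cases — in particular correctly pinning down which of two $G$-classes with the same Jordan form on $\mathcal{L}(G)$ an outer element falls into (as happened with $(A_1^3)^{(1)}$ versus $A_1^4$ in the $E_7$ analysis, resolved there by a finer centralizer argument) — but since none of these entries is used for a delicate equality in the proof of Theorem~\ref{t:main} (only the inequality $\a(G,H,y)\leqs\b$ is needed), a slightly generous upper bound $\dim(y^G\cap(H\setminus H^0)) \leqs \dim(y^G\cap H^0) + \dim(H\setminus H^0)_{\mathrm{unip}}$ suffices wherever the exact class is hard to determine, so the proof, though long, is routine.
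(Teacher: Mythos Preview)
Your proposal is correct and follows essentially the same route as the paper: treat each $H\in\mathcal{L}$ in turn, quote \cite{BGG1} for $y\in A_1$, read off $\dim(y^G\cap H^0)$ from Lawther's fusion tables \cite{Law2} (via the embeddings $D_4^2<D_8$, $A_2^4<A_2E_6$, $A_1G_2^2<G_2F_4$, etc.), and for $p\mid |H:H^0|$ identify the $G$-class of outer elements of order $p$ by computing their Jordan form on the adjoint module and matching against \cite{Law1}. Two minor citation slips: for $G=E_8$ the relevant tables are \cite[Table~9]{Law1} and \cite[Table~5]{Thomas}, not Tables~8 and~4; and the paper uses \cite[Table~1]{BGG1} rather than \cite[Theorem~3.1]{BGG1} for the long root column (the latter gives the right numbers too).
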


{\small
\begin{table}
\[
\begin{array}{l|cccccccc}
& A_1 & A_1^2 & A_1^3 & A_1^4 & A_2 & A_2A_1^3 & A_2^2A_1^2 & A_4A_3 \\ \hline
A_1E_7 & 11/14 & 9/14 & 4/7 & (27+\delta_{2,p})/56 & 4/7 & 3/8 & 9/28 & 0 \\
D_8 & 3/4 & 5/8 & 9/16 & (15+\delta_{2,p})/32 & 35/64 & 3/8 & 5/16 & 3/16 \\
G_2F_4 & 10/13 & 8/13 & 7/13 & 45/91 & 7/13 & 34/91 & 30/91 & 0 \\
A_2E_6.2 & 7/9 & 17/27 & 5/9 & (26+\delta_{2,p})/54 & 5/9 & 31/81 & 1/3 & 0 \\
A_8.2 & 3/4 & 13/21 & 23/42 & (20+\delta_{2,p})/42 & 1/2 & 31/84 & 0 & 4/21 \\
A_4^2.4 & 3/4 & 31/50 & 27/50 & (24+\delta_{2,p})/50 & 1/2 & 37/100 & 8/25 & 1/5 \\
D_4^2.(S_3 \times 2) & 3/4 & 5/8 & 9/16 & (15+\delta_{2,p})/32 & 17/32 & 13/32 & 1/3 & 0 \\
A_2^4.{\rm GL}_2(3) & 3/4 & 11/18 & 31/54 & (26+\delta_{2,p})/54 & 31/54 & 7/18 & 1/3 & 0 \\
A_1G_2^2.2 & 165/217 & 137/217 & 113/217 & 103/217 & 113/217 & 81/217 & 71/217 & 0  \\ 
A_1^8.{\rm AGL}_3(2) & 3/4 & 37/56 & 4/7 & (13+\delta_{2,p})/28 & 9/16 & 43/112 & 9/28 & 5/28 \\
T.W & 61/80 & 13/20 & 17/30 & \delta_{2,p}/2 & 67/120 & 47/120 & 1/3 & 1/5 \\
\end{array}
\]
\caption{The upper bound $\a(G,H,y) \leqs \b$ in Proposition \ref{p:e8_dim3}}
\label{tab:beta_e8}
\end{table}}

\begin{proof}
If $y \in A_1$ is a long root element, then $\a(G,H,y)$ is given in \cite[Table 1]{BGG1}, noting that $\a(G,H,y)=3/4$ when $H = D_4^2.(S_3 \times 2)$ and $p=2$. Indeed, in this case there are no long root elements in $H \setminus H^0$ (see below) and this allows us to deduce that $\a(G,H,y)=3/4$ for all $p$. For the remainder, we will assume $y \not\in A_1$. Let $V = W_G(\l_8)$ be the adjoint module for $G$.

Suppose $H \in \{A_1E_7, D_8, G_2F_4\}$. Here $H$ is connected and the $G$-class of each unipotent class in $H$ is determined in \cite{Law2}. From this we can  compute $\dim(y^G \cap H)$ and then obtain $\dim C_{\O}(y)$ via Proposition \ref{p:dim}.

Next assume $H = A_2E_6.2$. Here the $G$-class of each unipotent class in the connected component $H^0$ is recorded in \cite[Section 4.15]{Law2} and this allows us to compute $\dim(y^G \cap H^0)$. So to complete the analysis of this case, we may assume $p=2$ and $y \in H \setminus H^0$ is an involution. Here $y$ acts as a graph automorphism on both simple factors of $H^0$ and by applying Proposition \ref{p:graph} we deduce that  there are two $H^0$-classes of such elements, represented by $y_1$ and $y_2$, where $C_{H^0}(y_1) = B_1F_4$ and $C_{H^0}(y_2) = B_1C_{F_4}(u)$, with $u \in F_4$ a long root element. Then $\dim (y_i^G \cap (H \setminus H^0)) = 31, 47$ for $i=1,2$ and by considering the Jordan form of $y_1$ and $y_2$ on $V$ (see the proof of \cite[Lemma 3.11]{BGG1}) we see that $y_1$ is in the $G$-class labelled $A_1^3$, whereas $y_2$ is in $A_1^4$. If $y \in A_1^3$ then $\dim(y^G \cap H^0) = 40$ and thus $\a(G,H,y) = 5/9$. Similarly, if $y \in A_1^4$ then $\dim(y^G \cap H^0) = 44$, so $\dim(y^G \cap H) \leqs 44+3\delta_{2,p}$ and we deduce that $\a(G,H,y) \leqs (26+\delta_{2,p})/54$. 

The case $H = A_8.2$ is very similar, working with \cite[Section 4.16]{Law2} to compute $\dim(y^G \cap H^0)$. If $p=2$ and $y \in H \setminus H^0$ is an involution, then $y$ induces a graph automorphism on $H^0$, so $C_{H^0}(y) = B_4$, $\dim (y^G \cap (H\setminus H^0)) = 44$ and by arguing as in the proof of \cite[Proposition 5.11]{BGS} we deduce that $y$ is contained in the $G$-class $A_1^4$. Since $\dim (y^G \cap H^0) = 40$, we conclude that $\dim(y^G \cap H) \leqs 40+4\delta_{2,p}$ and thus $\a(G,H,y) \leqs (20+\delta_{2,p})/42$ as claimed. 

Now suppose $H = A_4^2.4$. Once again, we can compute $\dim (y^G \cap H^0)$ by inspecting \cite{Law2}, so we may assume $p=2$ and $y \in H \setminus H^0$ is an involution. By considering the restriction of $V$ to $H^0$ (see \cite[Table 5]{Thomas}) we deduce that $y$ induces a graph automorphism on both $A_4$ factors of $H^0$, so $C_{H^0}(y) = B_2^2$, $\dim (y^G \cap (H \setminus H^0)) = 28$ and we calculate that $y$ is in the $G$-class labelled $A_1^4$ (see the proof of \cite[Lemma 4.4]{BTh0}, for example). Since $\dim(y^G \cap H^0) = 24$, we conclude that $\a(G,H,y) \leqs (24+\delta_{2,p})/50$ when $y \in A_1^4$. 

Next let us turn to the case $H = D_4^2.(S_3 \times 2)$. Here $H^0 < D_8 < G$ and the embedding of $H^0$ in $D_8$ is transparent. Therefore, we can identify the $G$-class of each unipotent element in $H^0$ by appealing to \cite[Section 4.13]{Law2}. In turn, this allows us to compute $\dim (y^G \cap H^0)$ and so the analysis of this case is reduced to the situation where $p \in \{2,3\}$ and $y \in H \setminus H^0$ has order $p$. 

First assume $p=3$ and $y \in H \setminus H^0$ has order $3$. Here $y$ induces a triality graph automorphism on both $D_4$ factors and thus Proposition \ref{p:graph} implies that $\dim(y^G \cap (H \setminus H^0)) \leqs 40$. Next observe that
\[
V \downarrow H^0 = \mathcal{L}(H^0) \oplus (U_1 \otimes U_1) \oplus (U_2 \otimes U_2) \oplus (U_3 \otimes U_3)
\]
(see \cite[Table 5]{Thomas}), where $\mathcal{L}(H^0)$ is the Lie algebra of $H^0$ and $U_1$, $U_2$ and $U_3$ denote the three $8$-dimensional irreducible modules for $D_4$ (that is, the natural module and the two spin modules). Now $y$ cyclically permutes the three $64$-dimensional summands in this decomposition, which means that the Jordan form of $y$ on $V$ has at least $64$ Jordan blocks of size $3$. By inspecting \cite[Table 9]{Law1}, it follows that $y$ is not contained in any of the $G$-classes labelled $A_1, A_1^2, A_1^3, A_1^4$ or $A_2$. And if $y \in A_2A_1^3$ then the bound $\dim(y^G \cap H) \leqs 40$ yields $\a(G,H,y) \leqs 13/32$. Similarly, we get $\a(G,H,y) \leqs 1/3$ if $y \in A_2^2A_1^2$. 

Now assume $p=2$ and $y \in H \setminus H^0$ is an involution. Then up to conjugacy, $y$ either interchanges the two $D_4$ factors, or it acts as a graph automorphism on both factors. If $y$ swaps the two factors, then $y$ embeds in $D_8$ as an involution of type $(4A_1)'$ or $(4A_1)''$ in the notation of \cite[Table 8]{Law2} (that is, $y \in D_8$ is an involution of type $a_8$ or $a_8'$ in the notation of \cite{AS}) and by inspecting \cite[Section 4.13]{Law2} we deduce that $y$ is in the $G$-class $A_1^3$ or $A_1^4$. Similarly, if $y$ acts as a $b_1$-type graph automorphism on both factors, then $y \in D_8$ is of type $D_2$ and is therefore contained in the $G$-class $A_1^2$ (this corrects an error in the proof of \cite[Proposition 5.11]{BGS}, where it is incorrectly stated that $y$ is in the class $A_1$). And if $y$ acts as a $b_3$ graph automorphism on both factors, then $y$ is contained in the $D_8$-class $2A_1+D_2$ and is therefore in the $G$-class $A_1^4$. Similarly, if $y$ acts as a $b_1$ automorphism on one factor and $b_3$ on  the other, then $y$ is in the $G$-class $A_1^3$. So for $p=2$ we conclude that $\dim(y^G \cap (H \setminus H^0)) \leqs 30$ if $y \in A_1^4$ (since the class of $b_3$ graph automorphisms of $D_4$ has dimension $15$) and thus $\dim (y^G \cap H) = 26+6\delta_{2,p}$, which in turn yields $\a(G,H,y) = (15+\delta_{2,p})/32$. Similarly, if $y \in A_1^2$ then $\dim(y^G \cap H) = 20$ and $\a(G,H,y) = 5/8$. On the other hand, if $y \in A_1^3$ then $\dim(y^G \cap H) \leqs 28$ and we deduce that $\a(G,H,y) \leqs 9/16$.

Now suppose $H = A_2^4.{\rm GL}_2(3)$. Here $H^0 < A_2E_6$ and so we can work with the information in \cite[Sections 4.9, 4.15]{Law2} to compute $\dim(y^G \cap H^0)$. Now assume $p \in \{2,3\}$ and $y \in H \setminus H^0$ has order $p$. Suppose $p=2$. If $y \in A_1^2$ then the proof of \cite[Lemma 3.11]{BGG1} shows that $\dim(y^G \cap H) =8$ and we deduce that $\a(G,H,y) = 11/18$. For $y \in A_1^3,A_1^4$ we observe that $\dim (y^G \cap (H\setminus H^0)) \leqs 20$ (maximal if $y$ acts as a graph automorphism on each $A_2$ factor of $H^0$) and the result follows since $\dim(y^G \cap H^0) = 12,16$, respectively. Finally, suppose $p=3$. By arguing as in the proof of \cite[Lemma 3.11]{BGG1} we see that there are at least $54$ Jordan blocks of size $3$ in the Jordan form of $y$ on $V$ and so by inspecting \cite[Table 9]{Law1} we deduce that $y$ is not contained in any of the $G$-classes labelled $A_1$, $A_1^2$, $A_1^3$ or $A_1^4$. Moreover, by considering the action of $y$ on the simple factors of $H^0$, we see that $\dim(y^G \cap (H\setminus H^0)) \leqs 22$ (with equality if $y$ cyclically permutes three of the factors and acts as a regular element on the fixed factor). By comparing this estimate with $\dim(y^G \cap H^0)$, it follows that $\dim(y^G \cap H) \leqs 22$ for $y \in A_2, A_2A_1^3$, while $\dim(y^G \cap H) \leqs 24$ for $y \in A_2^2A_1^2$. In each case, this gives the bound $\a(G,H,y) \leqs \b$ presented in Table \ref{tab:beta_e8}.

Next assume $H = A_1G_2^2.2$ and note that $p \geqs 3$, so each $y \in H$ of order $p$ is contained in $H^0$. Since $H^0 = H_1H_2 < F_4G_2 < G$ with $H_1 = A_1G_2 < F_4$ and $H_2 = G_2$, we can use the information in \cite[Sections 5.3, 5.12]{Law2} to compute $\dim(y^G \cap H)$ and the result follows. 

Now suppose $H = A_1^8.{\rm AGL}_3(2)$ and note that $\dim y^{H^0} \leqs 16$ if $y \in H^0$. If $p \in \{2,3,7\}$ and $y \in H \setminus H^0$ then $y$ induces a nontrivial permutation of the $A_1$ factors of $H^0$ and it is straightforward to check that $\dim y^{H^0} \leqs 16$ for the elements of order $p$ we need to consider. For example, if $y$ has cycle-shape $(3^2,1^2)$ on the set of $A_1$ factors, then $\dim y^{H^0} \leqs 6+6+4 = 16$, with equality if $y$ acts nontrivially on the two fixed factors. And if $p=7$ and $y$ induces a $7$-cycle on the $A_1$ factors, then by considering the restriction of $V = W_G(\l_8)$ to $H^0$ we deduce that $y$ has at least $32$ Jordan blocks of size $7$ on $V$, which is incompatible with the Jordan form of the elements we are interested in (see \cite[Table 9]{Law1}). We conclude that $\dim(y^G \cap H) \leqs 16$ and this gives the bound $\a(G,H,y)\leqs \b$ in Table \ref{tab:beta_e8} unless $y \in A_1^4$ and $p \geqs 3$. In the latter case, we can argue as in the proof of Proposition \ref{p:e7_dim3} (for the case $H = A_1^7.{\rm GL}_3(2)$) to show that $y^G \cap H = y^G \cap H^0$ and $\dim(y^G \cap H) = 8$, which yields $\a(G,H,y) = 13/28$.

Finally, if $H = T.W$ then the trivial bound $\dim (y^G \cap H) \leqs 8$ is sufficient unless $y \in A_1^4$ and $p \geqs 3$. In the latter case, by arguing as in the proof of Proposition \ref{p:e7_dim3}, we deduce that $y^G \cap H$ is empty and thus $\a(G,H,y) = 0$. This completes the proof of the proposition.
\end{proof}

We are now ready to prove Theorem \ref{t:main} for $G = E_8$. In view of \cite[Theorem 7]{BGG1}, we may assume $t \in \{2,3,4\}$. First we handle the cases recorded in Table \ref{tab:special}.

\begin{prop}\label{p:e8_00}
The set $\Delta$ is empty if $X$ is one of the cases in Table \ref{tab:special}.
\end{prop}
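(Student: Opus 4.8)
The plan is to follow the template of Proposition~\ref{p:e7_00}. Write $\mathcal{C}_i = y_i^G$. By Proposition~\ref{p:clos}, it suffices to prove that $\Delta$ is empty for the maximal cases with respect to the closure order, that is, for each tuple $Y = (\mathcal{C}_1', \ldots, \mathcal{C}_t')$ in the $E_8$ row of Table~\ref{tab:clos} for which \eqref{e:di} is not satisfied (the remaining tuples being covered by Corollary~\ref{c:fixV}); every case of Table~\ref{tab:special} with $G=E_8$ lies in the closure of one of these. For each such $Y$ the aim is to exhibit a closed connected subgroup $L$ of $G$, together with representatives $y_i \in \mathcal{C}_i'$ lying in $M = N_G(L)$, so that all three hypotheses of Proposition~\ref{p:fibre} hold, and then to conclude that $\Delta$ is empty.

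First I would treat the tuples whose nontrivial classes are also unipotent classes of $E_7$ --- for instance $(A_1, D_5)$, $(A_1^2, D_4)$ and $(A_2, A_3)$. Here I would take $L = E_7$ inside the (connected) maximal subgroup $M = A_1 E_7$, embedding each $y_i$ in the $E_7$-class carrying the same label; the fusion of $E_7$-classes in $E_8$ recorded in \cite{Law2} then identifies the resulting $G$-classes. Provided that the restricted tuple is not one of the cases of Tables~\ref{tab:main} and \ref{tab:special} for $E_7$ --- which one checks against the $E_7$ row of Table~\ref{tab:clos} via the closure relation --- the already established $E_7$ case of Theorem~\ref{t:main} shows that the $y_i$ may be chosen to topologically generate $L$, and a short centraliser-dimension computation verifies condition~(iii) of Proposition~\ref{p:fibre} with $\mathcal{D}_i = y_i^M$. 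For the tuples involving only classes of type $A$ or $D$, such as $(A_1, A_1, A_1, A_1^2)$, $(A_1, A_1, A_3)$, $(A_1, A_1^2, A_2)$ and $(A_1, D_4A_2)$, I would instead embed the $y_i$ in a subgroup $L$ of type $D$ (inside $D_8$, or inside a simple factor of $D_4^2 < G$) or of type $A$, using the Jordan forms on the natural module of $L$ read off from \cite{Law2}; topological generation of $L$ then follows from \cite[Theorem~7]{BGG2} or from \cite{Ger}, once one checks that the relevant fixed-space inequality holds and that the tuple avoids the exceptional configurations listed in those references. Where $L$ cannot be taken with connected normaliser, or where one of the representatives lies in $M \setminus L$ rather than in $L$, I would use the modified form of the argument appearing at the end of the proof of Proposition~\ref{p:e7_00} (for the case $\mathcal{C}_2 = A_2A_1^3$): choose representatives of the form $u_i v_i$ with $v_i \in L$ and $u_i \in C_G(L)$, and verify directly that $G(y)^0 = L$ and $G(z)^0 \leqslant L$ for all $z \in Y$ before invoking Proposition~\ref{p:fibre}. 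When the numerology permits, the Lie-algebra criterion \cite[Corollary~3.20]{BGG2}, used as in Remark~\ref{r:e7_lie}, provides an alternative route.

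The main obstacle I anticipate is the one that already makes the $E_7$ analysis delicate: for the ``large'' tuples, notably $(A_1, D_4A_2)$ and $(A_1^2, D_4)$, the obvious ambient subgroup (for example $D_8$) has the property that the chosen class representatives fix too large a subspace of its natural module, so that \eqref{e:di} holds for that subgroup and \cite[Theorem~7]{BGG2} no longer forces the representatives to generate it. One must therefore locate a subgroup $L$ of just the right size, balancing the generation requirement against the exact dimension identity in Proposition~\ref{p:fibre}(iii), and in some cases accept a non-connected normaliser and run the modified fibre argument instead. Checking, case by case, the precise Jordan forms from \cite{Law2}, the relevant centraliser dimensions, and the resulting dimension counts is routine but lengthy, and I would organise it by subgroup type exactly as in the proof of Proposition~\ref{p:e7_00}.
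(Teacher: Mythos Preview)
Your overall plan---reduce via closure to the tuples in the $E_8$ row of Table~\ref{tab:clos} and then apply Proposition~\ref{p:fibre}---matches the paper, and your treatment of $(A_1,D_5)$, $(A_1^2,D_4)$ and $(A_2,A_3)$ via $L=E_7<M=A_1E_7$ is exactly what the paper does. The gap is in your second group. You propose to handle $(A_1,A_1,A_1,A_1^2)$, $(A_1,A_1,A_3)$ and $(A_1,A_1^2,A_2)$ by embedding in a subgroup of type $D$ or $A$ and invoking \cite[Theorem~7]{BGG2} or \cite{Ger}. This does not work. For instance with $L=D_8$ and natural module $W$ of dimension $16$, the classes $A_1$, $A_1^2$, $A_2$, $A_3$ in $G$ meet $D_8$ in elements with Jordan forms $(J_2^2,J_1^{12})$, $(J_2^4,J_1^8)$, $(J_3^2,J_1^{10})$, $(J_4^2,J_1^8)$, giving $\dim C_W(y_i)=14,12,12,10$ respectively. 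For each of the three tuples one finds $\sum_i\dim C_W(y_i)>(t-1)\dim W$, so by \cite[Theorem~7]{BGG2} the $D_8$-tuple itself has empty $\Delta$ and you cannot choose representatives topologically generating $D_8$; condition~(ii) of Proposition~\ref{p:fibre} then fails. Passing to a smaller classical subgroup only makes this worse while simultaneously destroying the dimension identity in~(iii).

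The paper's resolution is simpler than you anticipate: take $L=E_7$ and $M=A_1E_7$ in \emph{every} case. All the class labels $A_1$, $A_1^2$, $A_1^3$, $A_2$, $A_3$, $D_4$, $D_5$ occurring in the $E_8$ row of Table~\ref{tab:clos} are also $E_7$-class labels, and by \cite[Section~4.14]{Law2} the $E_7$-class with a given label fuses to the $G$-class with the same label. Topological generation of $L=E_7$ is then supplied not by \cite{BGG2} but by the $E_7$ instances of Theorem~\ref{t:main} just established in Theorems~\ref{t:e7_4}, \ref{t:e7_3} and \ref{t:e7_2}; one checks directly that none of the restricted tuples falls in the $E_7$ rows of Tables~\ref{tab:main} or~\ref{tab:special}. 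The dimension count in Proposition~\ref{p:fibre}(iii) holds in each case. The one genuine exception is $(A_1,D_4A_2)$: no $E_7$-class fuses to $D_4A_2$, so here (and only here) one uses the modified argument you describe, writing $y_2=u_2v_2\in A_1E_7$ with $v_2$ in the $E_7$-class $D_5(a_1)A_1$ and $u_2$ nontrivial in the $A_1$ factor, and then invoking Theorem~\ref{t:e7_2} for the pair $(y_1,v_2)$. The case $(A_1^2,D_4)$ that you flag as problematic is in fact entirely routine with $L=E_7$.
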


\begin{proof}
Write $\mathcal{C}_i = y_i^G$ and first assume $t=4$, so $(\mathcal{C}_1, \mathcal{C}_2, \mathcal{C}_3, \mathcal{C}_4) = (A_1,A_1,A_1,A_1^2)$. Here we may embed each $y_i$ in a subgroup $L = E_7$ such that $M=N_G(L)=A_1E_7$ is a maximal closed subgroup of $G$. Set $\mathcal{D}_i = y_i^M = y_i^L$ and note that the $L$-class $\mathcal{D}_i$ and the $G$-class $\mathcal{C}_i$ have the same labels (see \cite[Section 4.14]{Law2}), so $\dim \mathcal{C}_i = 58$ and $\dim \mathcal{D}_i = 34$ for $i=1,2,3$, $\dim \mathcal{C}_4 = 92$ and $\dim \mathcal{D}_4 = 52$. By Theorem \ref{t:e7_4}, we may assume that the $y_i$ topologically generate $L$. It is now straightforward to check that all of the conditions in Proposition \ref{p:fibre} are satisfied and we conclude that $\Delta$ is empty.

Next assume $t=3$. By considering Proposition \ref{p:clos} and the closure relation on unipotent classes (see \cite{Spal}), it suffices to show that $\Delta$ is empty for $(\mathcal{C}_1,\mathcal{C}_2,\mathcal{C}_3) = (A_1,A_1,A_3)$ and $(A_1,A_1^2,A_2)$. Suppose $(\mathcal{C}_1,\mathcal{C}_2,\mathcal{C}_3) = (A_1,A_1,A_3)$. As before, we may embed each $y_i$ in $L = E_7$, where $M = N_G(L) = A_1E_7$ and the classes $\mathcal{C}_i$ and $\mathcal{D}_i = y_i^M = y_i^L$ have the same labels. Moreover, by applying Theorem \ref{t:e7_3}, we may assume that the $y_i$ topologically generate $L$. If we set $Y = \mathcal{D}_1 \times \mathcal{D}_2 \times \mathcal{D}_3$, then $\dim X = 264$, $\dim Y = 152$ and thus $\Delta$ is empty by Proposition \ref{p:fibre}. The case $(\mathcal{C}_1,\mathcal{C}_2,\mathcal{C}_3) = (A_1,A_1^2,A_2)$ is entirely similar.

For the remainder we may assume $t=2$. Suppose $\mathcal{C}_1 = A_1$. By considering Proposition \ref{p:clos} and the closure relation on unipotent classes in $G$, we may assume that $\mathcal{C}_2 \in \{ D_5, D_4A_2\}$. Suppose $\mathcal{C}_2 = D_5$. Here we may embed $y_1$ and $y_2$ in $L = E_7$ so that $M = N_G(L) = A_1E_7$ and the classes $\mathcal{D}_i = y_i^M = y_i^L$ and $\mathcal{C}_i$ have the same labels. In addition, we may assume that the $y_i$ topologically generate $L$ (see Theorem \ref{t:e7_2}) and we now apply Proposition \ref{p:fibre}, noting that the condition in part (iii) holds since $\dim X = 258$ and $\dim Y = 146$. 

Now assume $(\mathcal{C}_1,\mathcal{C}_2) = (A_1,D_4A_2)$. This case requires a slight variation of the usual argument (this is analogous to the case we considered in the final paragraph of the proof of Proposition \ref{p:e7_00}). First we embed the $y_i$ in $M = A_1E_7 = N_G(L)$, where $L = E_7$. More precisely, we take $y_1$ to be in the $A_1$-class of $L$ and we choose $y_2 = u_2v_2 \in A_1E_7$, where $u_2$ is an element of order $p$ in the $A_1$ factor and $v_2 \in L$ is contained in the $E_7$-class labelled $D_5(a_1)A_1$ (see \cite[Table 23]{Law2}). Set $\mathcal{D}_i = y_i^M$ and $Y = \mathcal{D}_1 \times \mathcal{D}_2$, so $\dim X = 256$ and $\dim Y = 144$. By Theorem \ref{t:e7_2}, we may assume that $y_1$ and $v_2$ topologically generate $L$. Setting $y = (y_1,y_2) \in X$, this implies that $G(y)^0 = L$ and we have $G(z)^0 \leqs L$ for all $z \in Y$. We now apply Proposition \ref{p:fibre} to conclude, noting that $\dim X - \dim Y = \dim G - \dim M$.

By considering the closure relation on unipotent classes, it remains to show that $\Delta$ is empty when $(\mathcal{C}_1,\mathcal{C}_2) = (A_1^2,D_4)$ or $(A_2,A_3)$. In both cases, this is a straightforward application of Proposition \ref{p:fibre}, where we embed each $y_i$ in $L = E_7$. We omit the details.
\end{proof}

\begin{thm}\label{t:e8_4}
The conclusion to Theorem \ref{t:main} holds when $G = E_8$ and $t=4$.
\end{thm}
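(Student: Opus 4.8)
The plan is to follow the strategy used in the $t=4$ cases for $G = F_4, E_6, E_7$, adding one refinement that is forced by the slightly larger fixed-point ratios in $E_8$. First, by Corollary \ref{c:fixV} the set $\Delta$ is empty when $\mathcal{C}_i = A_1$ for all $i$, and by Proposition \ref{p:e8_00} it is empty when $(\mathcal{C}_1,\mathcal{C}_2,\mathcal{C}_3,\mathcal{C}_4) = (A_1,A_1,A_1,A_1^2)$. Since these are the only tuples with $G = E_8$ and $t = 4$ recorded in Tables \ref{tab:main} and \ref{tab:special}, it remains to prove that $\Delta$ is non-empty in every other case. By Proposition \ref{p:fix} this reduces to checking $\Sigma_X(H) < 3$ for all $H \in \mathcal{M}$, and by Theorem \ref{t:par} we may restrict to the reductive subgroups $H \in \mathcal{R}$, since $\Sigma_X(H) \geqs 3$ for a maximal parabolic $H$ would force $X$ into Table \ref{tab:main} or \ref{tab:special}.

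For the main estimate I would invoke \cite[Theorem 3.1]{BGG1}, which gives $\a(G,H,y) \leqs 11/14$ when $y \in A_1$ and $\a(G,H,y) < 2/3$ otherwise. If at least two of the classes $\mathcal{C}_i$ differ from $A_1$, then $\Sigma_X(H) < 2 \cdot (11/14) + 2 \cdot (2/3) = 61/21 < 3$, and there is nothing more to do. So the only remaining possibility is that at least three of the $\mathcal{C}_i$ equal $A_1$, say $\mathcal{C}_1 = \mathcal{C}_2 = \mathcal{C}_3 = A_1$ and $\mathcal{C}_4 \ne A_1$. Here the crude bound $3 \cdot (11/14) + 2/3 = 127/42$ exceeds $3$, so a sharper argument is required.

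In this final case the key is that the tuples $(A_1,A_1,A_1,A_1)$ and $(A_1,A_1,A_1,A_1^2)$ have already been dealt with, together with the fact that $A_1^3$ is the unique unipotent class covering $A_1^2$ in the closure order on the unipotent classes of $E_8$ (read off from \cite{Spal}). By Proposition \ref{p:clos} it therefore suffices to treat $\mathcal{C}_4 = A_1^3$. If $H \in \mathcal{R} \setminus \mathcal{L}$, then $H$ contains no long root elements by Theorem \ref{t:long}, so $\a(G,H,y_i) = 0$ for $i=1,2,3$ and hence $\Sigma_X(H) = \a(G,H,y_4) < 2/3 < 3$. If $H \in \mathcal{L}$, then $\Sigma_X(H) = 3\a(G,H,y_1) + \a(G,H,y_4)$, and feeding in the values from \cite[Table 1]{BGG1} (for $y_1 \in A_1$) and Proposition \ref{p:e8_dim3} (for $y_4 \in A_1^3$) yields $\Sigma_X(H) \leqs 3 \cdot (11/14) + 31/54 < 3$ in every case. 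This gives $\Sigma_X(H) < 3$ for all $H \in \mathcal{M}$ and completes the proof.

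The only genuine obstacle is the one just highlighted: because the maximal value of $\a(G,H,y)$ for a long root element $y$ of $E_8$ is $11/14$, rather than the $7/9$ available for $E_7$, the uniform inequality $3 \cdot (11/14) + 2/3 < 3$ fails, and one must use the prior removal of $(A_1,A_1,A_1,A_1)$ and $(A_1,A_1,A_1,A_1^2)$ to reduce the three-$A_1$ case to $\mathcal{C}_4 = A_1^3$. Everything else is a routine verification against the data already assembled in Proposition \ref{p:e8_dim3} and Theorem \ref{t:par}.
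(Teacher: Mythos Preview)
Your argument is correct, but it takes a somewhat more circuitous path than the paper's. The paper splits by the subgroup $H$ rather than by the number of $A_1$-classes in $X$: for $H \in \mathcal{R}$ with $H \ne A_1E_7$, \cite[Theorem 3.1]{BGG1} already gives the sharper bound $\a(G,H,y) \leqs 7/9$ for $y \in A_1$ (the value $11/14$ is attained only at $A_1E_7$), so $\Sigma_X(H) < 3 \cdot 7/9 + 2/3 = 3$ with no further work. For the single subgroup $H = A_1E_7$, Table~\ref{tab:beta_e8} gives $\a(G,H,y) \leqs 11/14,\, 9/14,\, 4/7$ according as $y$ is in $A_1$, $A_1^2$, or any other class; since $(A_1,A_1,A_1,A_1)$ and $(A_1,A_1,A_1,A_1^2)$ have already been excluded, one reads off $\Sigma_X(H) \leqs 3 \cdot 11/14 + 4/7 < 3$ directly. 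Your closure reduction to $\mathcal{C}_4 = A_1^3$ and the subsequent check over all of $\mathcal{L}$ are therefore unnecessary: the extra work disappears once you notice that the troublesome ratio $11/14$ occurs only at $A_1E_7$, and that for this one subgroup the $A_1E_7$ row of Table~\ref{tab:beta_e8} suffices.
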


\begin{proof}
By combining Corollary \ref{c:fixV} and Propositions \ref{p:fix} and \ref{p:e8_00}, we see that it suffices to show that $\Sigma_X(H)<3$ for all $H \in \mathcal{M}$ whenever $X$ is not one of the cases in Tables \ref{tab:main} and \ref{tab:special}. By Theorem \ref{t:par}, this inequality holds if $H \in \mathcal{P}$, so we may assume $H \in \mathcal{R}$.

Fix an element $y \in G$ of order $p$. If $H \ne A_1E_7$ then \cite[Theorem 3.1]{BGG1} states that $\a(G,H,y) \leqs 7/9$ if $y \in A_1$, otherwise $\a(G,H,y)<2/3$. Therefore, $\Sigma_X(H) < 3\cdot 7/9 + 2/3 = 3$ as required. Finally, if $H = A_1E_7$ then by considering closures we may assume 
\[
(\mathcal{C}_1,\mathcal{C}_2,\mathcal{C}_3,\mathcal{C}_4) = (A_1,A_1,A_1,A_1^3) \mbox{ or } (A_1,A_1,A_1^2,A_1^2),
\]
and in both cases we deduce that $\Sigma_X(H) < 3$ by inspecting Table \ref{tab:beta_e8}.
\end{proof}

\begin{thm}\label{t:e8_3}
The conclusion to Theorem \ref{t:main} holds when $G = E_8$ and $t=3$.
\end{thm}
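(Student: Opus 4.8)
The plan is to follow the template of the proofs of Theorems \ref{t:e6_3} and \ref{t:e7_3}. Write $\mathcal{C}_i = y_i^G$. By Corollary \ref{c:fixV} the set $\Delta$ is empty for every triple in Table \ref{tab:main}, and by Proposition \ref{p:e8_00} it is empty for every triple in Table \ref{tab:special}, so what remains is to show that $\Delta$ is non-empty for all other $X$. Since $G = E_8$, Theorem \ref{t:plus2} gives $\Delta^+ \ne \emptyset$, so by Proposition \ref{p:fix} it suffices to verify $\Sigma_X(H) < 2$ for every $H \in \mathcal{M}$, and by Theorem \ref{t:par} this holds whenever $H \in \mathcal{P}$ is a maximal parabolic subgroup. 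So I would fix $H \in \mathcal{R}$ and argue as follows.

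If $\mathcal{C}_i \ne A_1$ for all $i$, then \cite[Theorem 3.1]{BGG1} gives $\a(G,H,y_i) < 2/3$ for each $i$, whence $\Sigma_X(H) < 2$. So I may assume $\mathcal{C}_1 = A_1$. If $H \notin \mathcal{L}$, then $H$ contains no long root element by Theorem \ref{t:long}, so $\a(G,H,y_1) = 0$; combining this with $\a(G,H,y_i) < 2/3$ when $\mathcal{C}_i \ne A_1$ and $\a(G,H,y_i) = 0$ when $\mathcal{C}_i = A_1$, once more yields $\Sigma_X(H) < 2$. This leaves the case $\mathcal{C}_1 = A_1$, $H \in \mathcal{L}$, where the sharper bounds in Proposition \ref{p:e8_dim3} are needed.

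To handle this, I would use the closure order on unipotent classes of $E_8$ (Spaltenstein \cite{Spal}) together with Proposition \ref{p:clos}: if $\Delta$ is non-empty for a triple with components $\mathcal{C}'_i$ satisfying $\overline{\mathcal{C}'_i} \subseteq \overline{\mathcal{C}_i}$ for all $i$, then $\Delta$ is non-empty for $X$, so it suffices to treat the triples $(A_1,\mathcal{C}_2,\mathcal{C}_3)$ that are minimal in this order among those not listed in Tables \ref{tab:main} and \ref{tab:special}. Reading off Table \ref{tab:clos}, a $t = 3$ triple for $E_8$ is forbidden precisely when, up to reordering, each $\mathcal{C}_i$ lies in the closure of the corresponding entry of $(A_1,A_1,A_3)$ or of $(A_1,A_1^2,A_2)$. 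Unwinding this, the minimal configurations left to check are: $\mathcal{C}_2 = A_1$ with $\mathcal{C}_3$ a minimal class not lying in the closure of $A_3$, namely $A_2A_1^3$; $\mathcal{C}_2 = A_1^2$ with $\mathcal{C}_3$ a minimal class not lying in the closure of $A_2$, namely $A_1^4$; and $\mathcal{C}_2,\mathcal{C}_3 \notin \{A_1,A_1^2\}$ with $\mathcal{C}_2 = \mathcal{C}_3 = A_1^3$. Each of these classes appears among the columns of Table \ref{tab:beta_e8}, and in each of the three configurations the bounds recorded there yield $\Sigma_X(H) < 2$ for every $H \in \mathcal{L}$; for instance, when $H = A_1E_7$ the first configuration gives $2\cdot \tfrac{11}{14} + \tfrac{3}{8} = \tfrac{109}{56} < 2$, and the remaining subgroups in $\mathcal{L}$ (together with the $\delta_{2,p}$ contributions, relevant when $p = 2$) are dispatched in the same routine way.

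The one real obstacle is the reduction step just described. The Hasse diagram of unipotent classes of $E_8$ is large, and the work lies in confirming that the minimal non-forbidden configurations are exactly the three above — and, crucially, that each minimal case involves only classes appearing in Table \ref{tab:beta_e8}, since for a class outside that table only the generic bound $\a(G,H,y) < 2/3$ is available, which is too weak when two of the three classes equal $A_1$. Once this short list is in hand, the inequalities $\Sigma_X(H) < 2$ reduce to a finite inspection of Table \ref{tab:beta_e8}.
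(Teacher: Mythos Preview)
Your proposal is correct and follows essentially the same argument as the paper: after disposing of the tabulated cases via Corollary \ref{c:fixV} and Proposition \ref{p:e8_00}, you reduce to $H \in \mathcal{L}$ with $\mathcal{C}_1 = A_1$ and then to the three minimal configurations $(A_1,A_1,A_2A_1^3)$, $(A_1,A_1^2,A_1^4)$, $(A_1,A_1^3,A_1^3)$, which are exactly the cases treated in the paper. Your concern in the last paragraph is well-placed but unnecessary here, since those three are indeed the minimal non-forbidden triples and each involves only classes appearing in Table \ref{tab:beta_e8}.
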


\begin{proof}
Set $\mathcal{C}_i = y_i^G$. By Corollary \ref{c:fixV} and Proposition \ref{p:e8_00}, we know that  $\Delta$ is empty for each case in Tables \ref{tab:main} and \ref{tab:special}. Therefore, it remains to show that $\Delta$ is non-empty in all the other cases. As usual, to do this we will work with Proposition \ref{p:fix} and Theorem \ref{t:par}, which imply that it suffices to show that $\Sigma_X(H)<2$ for all $H \in \mathcal{R}$. 

If $\mathcal{C}_i \ne A_1$ for all $i$, then \cite[Theorem 3.1]{BGG1} gives $\a(G,H,y_i)<2/3$ and thus $\Sigma_X(H) <2$.  Therefore, we may assume $\mathcal{C}_1 = A_1$ and $H \in \mathcal{L}$, which brings the bounds in Proposition \ref{p:e8_dim3} into play.

First assume $\mathcal{C}_2 = A_1$. By considering Proposition \ref{p:clos} and the closure relation on unipotent classes, we may assume $\mathcal{C}_3 = A_2A_1^3$ and the desired bound $\Sigma_X(H)<2$ now follows via Proposition \ref{p:e8_dim3}. Similarly, if $\mathcal{C}_2 = A_1^2$ then we may assume $\mathcal{C}_3 = A_1^4$ and once again the bounds in Proposition \ref{p:e8_dim3} are good enough. Finally, if $\mathcal{C}_2 \ne A_1,A_1^2$ then by considering closures we may assume $\mathcal{C}_2 = \mathcal{C}_3 = A_1^3$ and the result follows by applying Proposition \ref{p:e8_dim3}. 
\end{proof}

\begin{thm}\label{t:e8_2}
The conclusion to Theorem \ref{t:main} holds when $G = E_8$, $t=2$ and $p \geqs 3$.
\end{thm}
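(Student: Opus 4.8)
The plan is to argue exactly as in the other rank-two cases, namely Theorems~\ref{t:f4_2}, \ref{t:e6_2} and~\ref{t:e7_2}. Write $\mathcal{C}_i = y_i^G$. By Corollary~\ref{c:fixV} and Proposition~\ref{p:e8_00} we already know that $\Delta$ is empty whenever $(\mathcal{C}_1,\mathcal{C}_2)$ appears in Table~\ref{tab:main} or Table~\ref{tab:special}, so it remains to show that $\Delta$ is non-empty for every other pair. Since $G = E_8$, Theorem~\ref{t:plus2} shows that $\Delta^+$ is non-empty, so by Proposition~\ref{p:fix} it suffices to prove the bound $\Sigma_X(H) < 1$ for all $H \in \mathcal{M}$. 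Theorem~\ref{t:par} disposes of the maximal parabolic subgroups in $\mathcal{P}$ — the exceptions listed in parts (iii) and (iv) of that theorem either occur for $G = F_4$ or already lie in Table~\ref{tab:special} — so from now on we may assume $H \in \mathcal{R}$.

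Next I would carry out the standard closure-theoretic reduction. By Proposition~\ref{p:clos} it suffices to prove that $\Delta$ is non-empty for the pairs $(\mathcal{C}_1,\mathcal{C}_2)$ that are minimal, in the product closure order, among those absent from Tables~\ref{tab:main} and~\ref{tab:special}; for such a pair both classes may be assumed to lie in the list $A_1, A_1^2, A_1^3, A_1^4, A_2, A_2A_1^3, A_2^2A_1^2, A_4A_3$ treated in Proposition~\ref{p:e8_dim3}. (Here I also use that $\a(G,H,\cdot)$ is non-increasing along the closure order on unipotent classes, which is precisely what makes these minimal pairs the hardest cases for the inequality $\Sigma_X(H)<1$.) A complementary observation, via Theorem~\ref{t:long}, is that a subgroup $H \in \mathcal{R}\setminus\mathcal{L}$ contains no long root element, so $\a(G,H,y_i) = 0$ as soon as $\mathcal{C}_i = A_1$, and in that situation $\Sigma_X(H) < 2/3$ by \cite[Theorem 3.1]{BGG1}. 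Thus the subgroups outside $\mathcal{L}$ — which for $E_8$ are $F_4$ with $p=3$, $B_2$ with $p\geqs 5$, the three $A_1$ subgroups, $A_2A_1.2$ with $p\geqs 5$, and $A_1\times S_5$ with $p\geqs 7$ — require genuine attention only when neither class is $A_1$.

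Then I would run the case division on $\mathcal{C}_1$ (up to reordering). If $\mathcal{C}_1 = A_1$ then $H \in \mathcal{L}$, and, staying outside the region generated by $(A_1,D_5)$ and $(A_1,D_4A_2)$, we may take $\mathcal{C}_2 = A_4A_3$ (or a smaller class if $p$ is too small for $A_4A_3$ to contain elements of order $p$); inspecting the $A_1$ and $A_4A_3$ columns of Table~\ref{tab:beta_e8} then gives $\Sigma_X(H) < 1$ for every $H \in \mathcal{L}$. If $\mathcal{C}_1 \in \{A_1^2, A_1^3\}$ we reduce to $\mathcal{C}_2 = A_2^2A_1^2$; if $\mathcal{C}_1 = A_2$ we reduce to $\mathcal{C}_2 = A_2A_1^3$ (avoiding the pair $(A_2,A_3)$); and otherwise we reduce to $\mathcal{C}_1 = \mathcal{C}_2 = A_1^4$. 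In each instance the relevant bounds in Proposition~\ref{p:e8_dim3} give $\Sigma_X(H) < 1$ for $H \in \mathcal{L}$ (the hypothesis $p \geqs 3$ is convenient, since it kills every $\delta_{2,p}$ term in Table~\ref{tab:beta_e8}). For $H \in \mathcal{R}\setminus\mathcal{L}$ one then checks, using the fusion results of \cite{Law2} (together with \cite{CST} when $H = F_4$ and $p=3$), that $H$ either fails to meet one of the two classes in question — so $\Sigma_X(H) = 0$ — or meets them only in classes small enough that $\dim(y^G\cap H)$ forces $\Sigma_X(H) < 1$.

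I expect the main obstacle to be the combinatorial bookkeeping at the last two steps: one must verify that the minimal non-tabled pairs really do reduce to the eight classes covered by Proposition~\ref{p:e8_dim3}, keep careful track of which of these classes contain elements of order $p$ for each relevant $p \geqs 3$, and — for the small maximal subgroups in $\mathcal{R}\setminus\mathcal{L}$, notably $A_1\times S_5$ with $p\geqs 7$ and the three $A_1$ subgroups — determine the $G$-classes of their unipotent elements precisely, by working through the appropriate module restrictions and the tables of \cite{Law1,Law2} in the manner of the proof of Proposition~\ref{p:e8_dim3}.
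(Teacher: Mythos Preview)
Your proposal is correct and follows essentially the same approach as the paper's proof. The one slip in your case division is that for $\mathcal{C}_1 = A_1^3$ the minimal non-tabled partner is $A_2A_1^3$, not $A_2^2A_1^2$ (the paper accordingly groups $A_1^3$ with $A_2$, both reducing to $\mathcal{C}_2 = A_2A_1^3$); the bounds in Table~\ref{tab:beta_e8} still give $\Sigma_X(H)<1$ for $(A_1^3,A_2A_1^3)$, so the argument is unaffected.
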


\begin{proof}
Write $\mathcal{C}_i = y_i^G$. In the usual way, by combining Corollary \ref{c:fixV} and Proposition  \ref{p:e8_00}, we observe that $\Delta$ is empty for each pair $(\mathcal{C}_1,\mathcal{C}_2)$ in Tables \ref{tab:main} and \ref{tab:special}. In order to show that $\Delta$ is non-empty in each of the remaining cases, it is enough to verify the bound $\Sigma_X(H) <1$ for all $H \in \mathcal{R}$.  

First assume $\mathcal{C}_1 = A_1$. If $H \not\in \mathcal{L}$ then $\a(G,H,y_1) = 0$ and the desired inequality clearly holds, so we may assume $H \in \mathcal{L}$. By considering the possibilities for $\mathcal{C}_2$ and the closure relation on unipotent classes (see \cite[Chapter 4]{Spal}), it suffices to show that $\Sigma_X(H)<1$ for $\mathcal{C}_3 = A_4A_3$ and this follows immediately from the upper bounds in Proposition \ref{p:e8_dim3}. 

Next assume $\mathcal{C}_1 = A_1^2$. By considering closures, we may assume $\mathcal{C}_2 = A_2^2A_1^2$. If $H \in \mathcal{L}$ then the bounds in Proposition \ref{p:e8_dim3} imply that $\Sigma_X(H)<1$ as required. On the other hand, if $H \not\in \mathcal{L}$ then either $H \cap \mathcal{C}_1$ is empty (and thus $\Sigma_X(H) = \a(G,H,y_2) < 2/3$ by \cite[Theorem 3.1]{BGG1}), or by inspecting \cite{CST,Law2} we deduce that $H = F_4$ and $p=3$ (as noted in the proof of Theorem \ref{t:long}, if $H = A_1 \times S_5$ and $p \geqs 7$, then the elements of order $p$ in $H$ are contained in the class $E_8(a_7)$). For $(H,p) = (F_4,3)$, the information in \cite[Table 2]{CST} yields $\a(G,H,y_1) = 30/49$ and $\a(G,H,y_2) = 16/49$, whence $\Sigma_X(H) = 46/49$. 

Now suppose $\mathcal{C}_1 = A_1^3$ or $A_2$. In the usual way, by considering closures, we may assume $\mathcal{C}_2 = A_2A_1^3$. If $H \cap \mathcal{C}_1$ is empty then $\Sigma_X(H)<2/3$ by \cite[Theorem 3.1]{BGG1}, so we may assume $H$ meets the class $\mathcal{C}_1$. By arguing as in the previous paragraph, we deduce that $H \in \mathcal{L}$ and one can check that the upper bounds on $\a(G,H,y_i)$ in Proposition \ref{p:e8_dim3} are sufficient in all cases.

Finally, let us assume $\mathcal{C}_i \not\in \{A_1,A_1^2,A_1^3,A_2\}$ for $i=1,2$. Here the closure of $\mathcal{C}_i$ contains the class $A_1^4$, so we may assume $\mathcal{C}_1 = \mathcal{C}_2 = A_1^4$. If $H \in \mathcal{L}$ then the bounds in Proposition \ref{p:e8_dim3} are sufficient (recall that $p \geqs 3$). On the other hand, if $H \not\in \mathcal{L}$ then by arguing as above we see that we may assume $H = F_4$ and $p=3$, in which case the fusion information in \cite[Table 2]{CST} yields $\a(G,H,y) = 45/98$ for $y \in A_1^4$ and thus $\Sigma_X(H) = 45/49$.
\end{proof}

This completes the proof of Theorem \ref{t:main} for $G = E_8$, which in turn completes the proof of Theorem \ref{t:main} in full generality.

\newpage 

\section{The tables}\label{s:tab}

Here we present Tables \ref{tab:main} and \ref{tab:special} from Theorem \ref{t:main}. We refer the reader to Remark \ref{r:main} for comments on the notation for unipotent classes adopted in the tables.

\vs

{\small
\begin{table}[h]
\renewcommand{\thetable}{A}
\[
\begin{array}{lll} \hline
G & t & (\mathcal{C}_1, \ldots, \mathcal{C}_t) \\ \hline
G_2 & 3 & (A_1, A_1, A_1) \\
& 2 & (A_1,A_1), (A_1,\tilde{A}_1), (A_1,(\tilde{A}_1)_3), (A_1,G_2(a_1))  \\
& & \\
F_4 & 4 & (A_1, A_1, A_1, A_1) \\
& 3 & (A_1, A_1, A_1), (A_1, A_1, \tilde{A}_1), (A_1, A_1, (\tilde{A}_1)_2), (A_1, A_1, A_1\tilde{A}_1), (A_1, A_1, A_2)\\
& 2 & (A_1,A_1), (A_1,\tilde{A}_1), (A_1, A_1\tilde{A}_1), (A_1,A_2), (A_1, \tilde{A}_2), (A_1, A_2\tilde{A}_1), (A_1, \tilde{A}_2A_1), (A_1, B_2), (A_1,C_3(a_1)) \\
& & (A_1, F_4(a_3)), (A_1, B_3), (\tilde{A}_1, \tilde{A}_1), (\tilde{A}_1, A_1\tilde{A}_1), (\tilde{A}_1, A_2), (A_1\tilde{A}_1, A_1\tilde{A}_1), (A_1\tilde{A}_1, A_2), (A_2,A_2) \\
& & \\
E_6 & 4 & (A_1, A_1, A_1, A_1) \\
& 3 & (A_1, A_1, A_1), (A_1, A_1, A_1^2), (A_1, A_1, A_1^3), (A_1, A_1, A_2), (A_1, A_1^2, A_1^2) \\
& 2 & (A_1,A_1), (A_1,A_1^2), (A_1,A_1^3), (A_1,A_2), (A_1,A_2A_1), (A_1,A_2^2), (A_1,A_2A_1^2), (A_1,A_3), (A_1,A_2^2A_1) \\
& & (A_1,A_3A_1), (A_1,D_4(a_1)), (A_1,A_4), (A_1,D_4), (A_1^2,A_1^2), (A_1^2,A_1^3), (A_1^2,A_2), (A_1^2,A_2A_1) \\
& & (A_1^2,A_2A_1^2), (A_1^2,A_3), (A_1^3,A_1^3), (A_1^3,A_2), (A_2,A_2) \\ 
& & \\
E_7 & 4 & (A_1, A_1, A_1, A_1) \\ 
& 3 & (A_1,A_1,A_1), (A_1,A_1,A_1^2), (A_1,A_1,(A_1^3)^{(1)}), (A_1,A_1,(A_1^3)^{(2)}), (A_1,A_1,A_2), (A_1,A_1,A_1^4) \\
& &  (A_1,A_1,A_2A_1), (A_1,A_1^2,A_1^2) \\
& 2 & (A_1,A_1), (A_1,A_1^2), (A_1,(A_1^3)^{(1)}), (A_1,(A_1^3)^{(2)}), (A_1,A_2), (A_1,A_1^4), (A_1,A_2A_1), (A_1,A_2A_1^2) \\
& &  (A_1,A_2A_1^3), (A_1,A_2^2), (A_1,A_3), (A_1,(A_3A_1)^{(1)}), (A_1,A_2^2A_1), (A_1,(A_3A_1)^{(2)}), (A_1,D_4(a_1)) \\
& &  (A_1,A_3A_1^2), (A_1,D_4), (A_1,D_4(a_1)A_1), (A_1,A_3A_2), (A_1,A_4), (A_1,A_3A_2A_1), (A_1,D_4A_1) \\
& &  (A_1,A_4A_1), (A_1,D_5(a_1)), 
(A_1^2,A_1^2), (A_1^2,(A_1^3)^{(1)}), (A_1^2,(A_1^3)^{(2)}), (A_1^2,A_2), (A_1^2,A_1^4), (A_1^2,A_2A_1) \\
& & (A_1^2,A_2A_1^2), (A_1^2,A_2A_1^3),  
(A_1^2,A_3), ((A_1^3)^{(1)},(A_1^3)^{(2)}), ((A_1^3)^{(1)},A_2), ((A_1^3)^{(2)}, (A_1^3)^{(2)}) \\
& & ((A_1^3)^{(2)},A_2), ((A_1^3)^{(2)},A_1^4), ((A_1^3)^{(2)},A_2A_1), (A_2,A_2), (A_2, A_1^4), (A_2, A_2A_1) \\
& & \\
E_8 & 4 & (A_1, A_1, A_1, A_1) \\
& 3 & (A_1,A_1, A_1), (A_1,A_1,A_1^2), (A_1,A_1,A_1^3), (A_1,A_1,A_2), (A_1,A_1,A_1^4), (A_1,A_1^2,A_1^2) \\
& 2 & (A_1,A_1), (A_1,A_1^2), (A_1,A_1^3), (A_1,A_2), (A_1,A_1^4), (A_1,A_2A_1), (A_1,A_2A_1^2), (A_1,A_3), (A_1,A_2A_1^3) \\
& &  (A_1,A_2^2), (A_1,A_2^2A_1), (A_1,A_3A_1), (A_1,D_4(a_1)), (A_1,D_4), (A_1,A_2^2A_1^2), (A_1,A_3A_1^2) \\
& & (A_1, D_4(a_1)A_1), (A_1,A_3A_2), (A_1,A_4), (A_1,A_3A_2A_1), (A_1,D_4A_1),(A_1,D_4(a_1)A_2)  \\
& &  (A_1,A_4A_1), (A_1,A_3^2), (A_1^2, A_1^2), (A_1^2,A_1^3), (A_1^2,A_2), (A_1^2,A_1^4), (A_1^2,A_2A_1), (A_1^2,A_2A_1^2) \\
& & (A_1^2,A_3), (A_1^2,A_2A_1^3), (A_1^3, A_1^3), (A_1^3,A_2), (A_1^3,A_1^4), (A_2, A_2), (A_2,A_1^4) \\ \hline
\end{array}
\]
\caption{The varieties $X = \mathcal{C}_1 \times \cdots \times \mathcal{C}_t$ in Theorem \ref{t:main} with 
$\Delta = \emptyset$, Part I}
\label{tab:main}
\end{table}
}

{\small
\begin{table}[h]
\renewcommand{\thetable}{B}
\[
\begin{array}{lll} \hline
G & t & (\mathcal{C}_1, \ldots, \mathcal{C}_t) \\ \hline
F_4 & 3 & (A_1, \tilde{A}_1, \tilde{A}_1), (A_1, \tilde{A}_1, (\tilde{A}_1)_2), (A_1, (\tilde{A}_1)_2, (\tilde{A}_1)_2) \\
& 2 & (\tilde{A}_1,\tilde{A}_2), (\tilde{A}_1, A_2\tilde{A}_1), (\tilde{A}_1,B_2) \\
E_6 & 2 & (A_1^2,A_2^2) \\ 
E_7 & 3 & (A_1,A_1^2,(A_1^3)^{(1)}), (A_1,(A_1^3)^{(1)},(A_1^3)^{(1)}) \\
& 2 & (A_1,(A_5)^{(1)}), (A_1^2,A_2^2), (A_1^2,(A_3A_1)^{(1)}), ((A_1^3)^{(1)}, (A_1^3)^{(1)}), ((A_1^3)^{(1)},A_1^4), ((A_1^3)^{(1)},A_2A_1) \\
& &  ((A_1^3)^{(1)},A_2A_1^2), ((A_1^3)^{(1)},A_2A_1^3), ((A_1^3)^{(1)},A_2^2), ((A_1^3)^{(1)},A_3), ((A_1^3)^{(1)},(A_3A_1)^{(1)}) \\
E_8 & 4 & (A_1, A_1, A_1, A_1^2) \\  
& 3 & (A_1,A_1,A_2A_1), (A_1,A_1,A_2A_1^2), (A_1,A_1,A_3), (A_1,A_1^2,A_1^3), (A_1,A_1^2,A_2)  \\
& 2 & (A_1,D_5(a_1)), (A_1,A_4A_1^2), (A_1,A_4A_2), (A_1,A_4A_2A_1), (A_1,D_5(a_1)A_1), (A_1,A_5), (A_1,D_4A_2) \\
& &  (A_1,E_6(a_3)), (A_1,D_5), (A_1^2,A_2^2), (A_1^2,A_2^2A_1),(A_1^2,A_3A_1), (A_1^2,D_4(a_1)), (A_1^2,D_4), (A_1^3,A_2A_1) \\
& & (A_1^3,A_2A_1^2), (A_1^3,A_3), (A_2,A_2A_1), (A_2,A_2A_1^2), (A_2,A_3)  \\ \hline
\end{array}
\]
\caption{The varieties $X = \mathcal{C}_1 \times \cdots \times \mathcal{C}_t$ in Theorem \ref{t:main} with 
$\Delta = \emptyset$, Part II}
\label{tab:special}
\end{table}
}

\clearpage

\section{Conflict of interest}

The author declares that he has no conflict of interest.

\end{document}